\newcommand\tc[1]{{\color{blue} \sf #1}}
\newtheorem{lemma}{Lemma}[section]
\newtheorem{theorem}[lemma]{Theorem}
\newtheorem{proposition}[lemma]{Proposition}
\newtheorem{prop}[lemma]{Proposition}
\newtheorem{cor}[lemma]{Corollary}
\newtheorem{conj}[lemma]{Conjecture}
\newtheorem{claim*}{Claim}
\newtheorem{thm}[lemma]{Theorem}
\theoremstyle{definition}
\newtheorem{remark}[lemma]{Remark}
\newtheorem{example}[lemma]{Example}
\newtheorem{defn}[lemma]{Definition}
\newcommand{\G}{{\mathbb G}}
\newcommand{\PP}{{\mathbb P}}
\newcommand{\C}{{\mathbb C}}
\newcommand{\F}{{\mathbb F}}
\newcommand{\Q}{{\mathbb Q}}
\newcommand{\R}{{\mathbb R}}
\newcommand{\Z}{{\mathbb Z}}
\newcommand{\DT}{{\mathbb S}}
\newcommand{\U}{{\mathbb U}}
\newcommand{\Xbar}{{\overline{X}}}
\newcommand{\Qbar}{{\overline{\Q}}}
\newcommand{\kbar}{{\overline{k}}}
\newcommand{\Fbar}{{\overline{\F}}}
\newcommand{\Ybar}{{\overline{Y}}}
\DeclareMathOperator{\sHom}{\mathscr{H}\text{\kern -4pt {\calligra\large om}}\,}
\newcommand{\calC}{{\mathcal C}}
\newcommand{\calG}{{\mathcal G}}
\newcommand{\calO}{{\mathcal O}}
\newcommand{\calR}{{\mathcal R}}
\newcommand{\calS}{{\mathcal S}}
\newcommand{\calT}{{\mathcal T}}
\newcommand{\calU}{{\mathcal U}}
\newcommand{\calW}{{\mathcal W}}
\newcommand{\calX}{{\mathcal X}}
\newcommand{\calMT}{{\mathcal{MT}}}
\newcommand{\frakm}{{\mathfrak m}}
\newcommand{\frakp}{{\mathfrak p}}
\newcommand{\frakE}{{\mathfrak E}}
\newcommand{\frakP}{{\mathfrak P}}
\newcommand{\pp}{{\mathfrak p}}
\DeclareMathOperator{\HH}{H}
\DeclareMathOperator{\Tr}{Tr}
\DeclareMathOperator{\Frob}{Frob}
\DeclareMathOperator{\rk}{rk}
\DeclareMathOperator{\Char}{char}
\DeclareMathOperator{\im}{im}
\DeclareMathOperator{\End}{End}
\DeclareMathOperator{\Hom}{Hom}
\DeclareMathOperator{\Ext}{Ext}
\DeclareMathOperator{\Aut}{Aut}
\DeclareMathOperator{\Gal}{Gal}
\DeclareMathOperator{\Res}{Res}
\DeclareMathOperator{\Br}{Br}
\DeclareMathOperator{\Gr}{Gr}
\DeclareMathOperator{\Spec}{Spec}
\DeclareMathOperator{\tors}{tors}
\DeclareMathOperator{\et}{et}
\DeclareMathOperator{\Bl}{Bl}
\DeclareMathOperator{\SL}{SL}
\DeclareMathOperator{\GL}{GL}
\DeclareMathOperator{\Id}{Id}
\DeclareMathOperator{\NS}{NS}
\DeclareMathOperator{\MT}{MT}
\DeclareMathOperator{\TT}{T}
\DeclareMathOperator{\GO}{GO}
\DeclareMathOperator{\SO}{SO}
\DeclareMathOperator{\Hdg}{Hdg}
\DeclareMathOperator{\conn}{c}
\DeclareMathOperator{\nr}{nr}
\newcommand{\isom}{\simeq}
\newcommand{\into}{\hookrightarrow}
\numberwithin{equation}{section}
\numberwithin{table}{section}
\newcommand{\defi}[1]{\textsf{#1}} 
\title[Reduction of Brauer classes]{Reduction of Brauer classes on K3 surfaces, rationality and derived equivalence}
\author{Sarah Frei}
\thanks{S. F. was partially supported by  NSF grant DMS-1745670. }
\address{Department of Mathematics MS 136, Rice University, 6100 S.\ Main St., Houston, TX 77005, USA}
\email{sf31@rice.edu}
\urladdr{http://math.rice.edu/\~{}sf31}
\author{Brendan Hassett}
\thanks{B. H. was partially supported by  NSF grant DMS-1701659 and Simons Foundation Award 546235. }
\address{Institute for Computational and Experimental Research in Mathematics, 121 South Main Street, 11th Floor, Box E, Providence, RI 02903 USA}
\address{Department of Mathematics, Brown University, Box 1917 151 Thayer Street, Providence, Rhode Island 02912 USA}
\email{brendan\_hassett@brown.edu}
\urladdr{http://http://www.math.brown.edu/bhassett/}
\author{Anthony V\'arilly-Alvarado}
\thanks{A. V.-A. was partially supported by  NSF grants DMS-1352291 and DMS-1902274. }
\address{Department of Mathematics MS 136, Rice University, 6100 S.\ Main St., Houston, TX 77005, USA}
\email{av15@rice.edu}
\urladdr{http://math.rice.edu/\~{}av15}
\date{February 28, 2022}
\subjclass[2020]{Primary 14J28, 14F22; Secondary 14E08, 14F08}
\begin{document}

\begin{abstract}
Given a smooth projective variety over a number field and an element 
of its Brauer group, we consider the specialization of the Brauer 
class at a place of good reduction for the variety and the class. We 
are interested in the case of K3 surfaces.
  We show that a Brauer class on a very general polarized K3 surface 
over a number field becomes trivial after specialization for a set of 
places of positive natural density. We deduce that there exist cubic fourfolds over number fields that are conjecturally irrational, with rational reduction for a positive proportion of places. We also deduce that there are twisted derived 
equivalent K3 surfaces which become derived equivalent after reduction 
for a positive proportion of places.
\end{abstract}

\maketitle


\section{Introduction}

Suppose that $X$ is a smooth projective surface over a number field $k$; write $\Br(X) := \HH^2_{\et}(X,\G_m)_{\tors}$ for its Brauer group. For a place $\frakp$ of $k$ of good reduction for $X$, let $X_\frakp$ denote the reduction modulo $\frakp$. Similarly, for $\alpha \in \Br(X)$ and a place $\frakp$ where $\alpha$ is unramified, we let $\alpha_\frakp$ be the image of $\alpha$ under the reduction map $\Br(X) \to \Br(X_\frakp)$. What can we say about the density of the set
$$\calS(X,\alpha):=\{\frakp: \alpha_{\frakp}=0\in \Br(X_{\frakp})\}?$$
We might also ask for algebraicity, i.e.,  $\alpha_{\frakp}=0 \in \Br(\Xbar_{\frakp})$ on passing to an algebraic closure.

Rationality questions for fourfolds give an impetus for considering problems of this kind \cite{AIMPL}. Several papers \cite{HassettJAG,Kuzcubic,HPT,AHTV} illustrate how the rationality
of complex fourfolds may be controlled by Brauer classes on surfaces: For certain smooth projective fourfolds $Y$, there exist a surface $X$ and a 
Brauer class $\alpha$ on $X$ such that $Y$ is rational whenever $\alpha=0$.  When $Y$ is defined over a number field,
we may also consider
$$\calR(Y):=\{\frakp: Y_{\frakp} \text{ is smooth and rational}\}.$$
Totaro's specialization technique \cite{Totaro}, applied where rationality is not a deformation invariant, gives examples where
$\calR(Y)\neq \emptyset$ with $\Ybar$ irrational \cite{Freiprep}. Can $\calR(Y)$ be infinite when $\Ybar$ is not rational? Can it have positive natural density?

\subsection{K3 surfaces}
Now let $X$ be a K3 surface.
Let $\TT(X)$ be the transcendental cohomology of $X$, i.e., the orthogonal complement of the N\'eron-Severi group $\NS(X_{\C})\subset \HH^2(X(\C),\Z)$. Since $\TT(X)_\Q:=\TT(X)\otimes \Q$ is a rational Hodge structure of K3 type, the endomorphism algebra $E:=\End_{\mathrm{Hdg}}(\TT(X)_\Q)$ is a totally real or a CM field~\cite[Theorem 1.5.1]{Zarhin}. 

\begin{theorem}
\label{thm:main}
Let $X$ be a K3 surface over a number field $k$.  Assume that $E$ is totally real, and that $\dim_E (\TT(X)_\Q)$ is odd. Let $\alpha \in \Br(X)$. Then the set $\calS(X,\alpha)$ of places $\pp$ such that $\alpha_\frakp \in \Br(X_\frakp)$ vanishes contains a set of positive natural density.
\end{theorem}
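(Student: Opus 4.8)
The plan is to reduce the vanishing of $\alpha_\frakp$ to a statement about Frobenius eigenvalues on the transcendental part of $\ell$-adic cohomology, and then to extract positive density from a Chebotarev-type argument governed by the Mumford–Tate (or rather, the $\ell$-adic monodromy) group. First I would recall that for a K3 surface, the Brauer group sits in an exact sequence coming from the Kummer sequence; concretely, a class $\alpha \in \Br(X)[\ell^\infty]$ corresponds (after base change to $\kbar$ and passing to a cyclic subgroup) to a surjection $\TT(X_{\kbar}) \otimes \Z_\ell \twoheadrightarrow \Z/\ell^n$, i.e.\ to a nonzero functional on the transcendental lattice with $\ell$-power torsion cokernel, and similarly with $\Q_\ell/\Z_\ell$ coefficients via $\HH^2_{\et}(X_{\kbar},\G_m)[\ell^\infty] = \HH^2_{\et}(X_{\kbar},\mu_{\ell^\infty})/(\NS \otimes \Q_\ell/\Z_\ell)$. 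The key point is that $\alpha_\frakp = 0$ in $\Br(X_\frakp)$ is implied by $\alpha_{\frakp}$ being killed by the Frobenius action: if the geometric Frobenius $\Frob_\frakp$ acts on the relevant transcendental $\Z_\ell$-module in a way compatible with the functional defining $\alpha$, the class descends and becomes algebraic, hence is absorbed into $\NS(X_\frakp)$ — in fact one wants $\Frob_\frakp$ to act trivially on a $1$-dimensional transcendental piece, forcing a new algebraic class through which $\alpha_\frakp$ factors.

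The heart of the matter is thus: show that for a positive density of primes $\frakp$, the geometric Frobenius $\Frob_\frakp$ acting on $\TT(X)\otimes \Q_\ell$ has eigenvalue $q_\frakp$ (the size of the residue field, up to the appropriate Tate twist) with multiplicity forcing an extra algebraic class in the reduction through which $\alpha$ vanishes. Here I would use the hypotheses: $E = \End_{\mathrm{Hdg}}(\TT(X)_\Q)$ is totally real and $d := \dim_E \TT(X)_\Q$ is odd. The $\ell$-adic realization $\TT(X)\otimes \Q_\ell$ is a module over $E \otimes \Q_\ell$, and the image of Galois lies in the group preserving the cup-product form, which (because $E$ is totally real and $\TT$ carries a symmetric $E$-bilinear form refining the Poincaré pairing) is an orthogonal-type group $\Orth_d$ over $E$, or rather its restriction of scalars. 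Since $d$ is odd, $\SO_d$ over each real place is $\SO(a,b)$ with $a+b$ odd, and crucially $-1$ is not in $\SO_d$; equivalently, every element of $\Orth_d$ in the non-identity component, or every element of $\SO_d$ for odd $d$, has $+1$ as an eigenvalue. This odd-dimensionality is exactly what guarantees that a "generic" Frobenius (one whose image is regular semisimple in $\SO_d(E\otimes\Q_\ell)$, the case of positive density by a theorem of the type of Serre/Zarhin describing the image of Galois for K3 surfaces with real multiplication) has a fixed vector in $\TT(X)\otimes\Q_\ell$, which after accounting for the Tate twist produces the desired algebraic class in $\NS(X_\frakp)$ killing $\alpha_\frakp$. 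I would make this precise by choosing a single prime $\ell$, letting $G_\ell$ be the Zariski closure of the image of $\Gal(\kbar/k)$ acting on $\TT(X)\otimes\Q_\ell$, identifying its identity component with $\Res_{E_\ell/\Q_\ell}\SO_d$ (using Zarhin's theorem and the Kuga–Satake construction, or Tankeev/André results for K3 surfaces), and then applying Chebotarev to the conjugacy class of elements $g \in G_\ell$ with $\det(g - 1 \mid \TT) = 0$; the hypothesis $d$ odd makes this set contain an open subset of a non-identity coset (or all of $\SO_d$), hence of positive Haar measure, so the associated Frobenius primes have positive natural density.

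The main obstacle I anticipate is twofold. First, passing from "$\Frob_\frakp$ has a transcendental eigenvalue equal to $q_\frakp$" to "$\alpha_\frakp$ actually vanishes in $\Br(X_\frakp)$" requires care: one must track the specific functional cutting out $\alpha$ (not merely know that \emph{some} new algebraic class appears) and show that this class, which is $\Frob_\frakp$-invariant, kills $\alpha_\frakp$ — this means controlling the reduction map $\Br(X) \to \Br(X_\frakp)$ integrally, i.e.\ at the level of $\Z_\ell$-lattices, and ensuring the cokernel functional defining $\alpha$ lands in the subspace annihilated by the new class; handling $\Br$ vs.\ $\Br(\Xbar_\frakp)$ (the algebraicity refinement) and the contribution of the $\mathrm{prime}$-to-$\ell$ part of $\alpha$ also belongs here. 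Second, one needs that the density computed $\ell$-adically is a \emph{natural} density, not merely a Dirichlet density, which requires either the effective/natural Chebotarev density theorem (available since $G_\ell$ is an algebraic group and the target set is a union of connected components of a variety, so this should follow from Serre's work on Frobenian sets, e.g.\ \cite{Serre2012}) or an independence-of-$\ell$ argument; I would cite the relevant form of Chebotarev for compatible systems. Assuming the group-theoretic input on the image of Galois for K3 surfaces with totally real multiplication — which is where Zarhin's theorem, cited in the excerpt, does the essential work — the remaining steps are a matter of careful bookkeeping with lattices and a density computation in an orthogonal group of odd rank.
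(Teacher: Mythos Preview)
Your strategic outline is correct and matches the paper's: Zarhin's identification of the Mumford--Tate group with $\Res_{E/\Q}\SO_d$, the observation that every element of $\SO_d$ for odd $d$ has eigenvalue $+1$, the integral Tate conjecture to convert Frobenius eigenvalue $1$ into a new algebraic class, and Chebotarev for density. But the step you flag as the ``main obstacle'' and then dismiss as ``careful bookkeeping with lattices'' is in fact the central technical contribution of the paper, and your sketch does not resolve it.

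The difficulty is this: the locus of $g\in\SO_d$ whose $(+1)$-eigenspace contains a \emph{prescribed} line (the one determined by $\alpha$) is a proper closed subvariety, hence of Haar measure zero. So your proposed density argument---``apply Chebotarev to the set $\{g:\det(g-1)=0\}$, which is all of $\SO_d$''---only tells you that \emph{some} transcendental class becomes algebraic at a positive proportion of primes; it says nothing about whether that class kills your specific $\alpha$. The paper's resolution is not bookkeeping but a genuine construction: one works at a finite level $\bmod\,\ell^{en}$ for $n$ large, uses the open image theorem (in fact the adelic version of Cadoret--Moonen when $m$ is composite) to know that the Galois image contains all elements congruent to the identity $\bmod\,\ell^{en_0}$, and then shows (via a jet-separation lemma) that among such elements one can find a $\gamma$ lying in the good open set $U$ \emph{and} whose $(+1)$-eigenspace, reduced $\bmod\,\ell^e$, contains the fixed lift of $\alpha$. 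The point is that the congruence condition $\bmod\,\ell^{en_0}$ does not constrain the position of the $(+1)$-eigenspace, so one is free to aim it at $\alpha$. Chebotarev is then applied to the conjugacy class of this single $\gamma$ in the finite group $\Gamma_{m,n}$, which immediately gives natural (not just Dirichlet) density with no appeal to Frobenian sets. A further subtlety you do not mention is that $\alpha'$ (the level-$n$ lift) is not Galois-invariant, so one must check that conjugating $\gamma$ within $\Gamma_{m,n}$ moves $\alpha'$ within its $\Gamma_{m,n}$-orbit, all of whose elements reduce to the Galois-invariant $\alpha$ at level $1$.
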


\begin{remark}
\label{rem:vgeneral}
For a very general polarized K3 surface $X$, we have $\NS(X_{\C}) \isom \Z$, in which case $\dim_E (\TT(X)_\Q)$ is odd and $E$ is totally real~\cite{HuybrechtsK3s}*{Remark~{\bf 3}.3.14(ii)}. Moreover, there exist K3 surfaces over number fields such that $\NS(X_{\C}) \isom \Z$~\cites{Terasoma,Ellenberg,vanLuijk} and $E=\Q$ \cites{Noot,Masser}.
\end{remark}

Without the assumptions that $E$ is totally real and that $\dim_E (\TT(X)_\Q)$ is odd, Theorem~\ref{thm:main} is false. By \cite[Theorem~1(2)]{Charles}, if $E$ is a CM field or $\dim_E (\TT(X)_\Q)$ is even, then after a finite field extension, there is a set of places $\calS$ of natural density one for which $\rk \NS(\Xbar)=\rk \NS(\Xbar_{\frakp})$.  On the other hand, a jump in the Picard rank upon reduction is required for a transcendental Brauer class to become algebraic (or vanish). For a prime $\ell$ and a fixed embedding $\kbar \into \kbar_\frakp$, it is well known that $\NS(\Xbar)\otimes \Z_\ell \hookrightarrow \NS(\Xbar_\frakp) \otimes \Z_\ell$, which by the Kummer sequence forces a nontrivial kernel for the map $\Br(X)[\ell^\infty] \twoheadrightarrow \Br(X_\frakp)[\ell^\infty]$, so when the Picard rank jumps upon reduction, the Brauer group must shrink. Thus, there is necessarily a trade-off between Brauer classes modulo $\frakp$ and algebraic classes in $\NS(\Xbar_\frakp)$.

Costa and Tschinkel \cite{CT} found experimentally that the Picard rank may jump over a positive-density set of places for {\em some} K3 surfaces of Picard rank two: 
jumping occurs at half of all primes.
This was explained by Costa, Elsenhans, and Jahnel \cite{CEJ} using the concept of the 
\emph{jump character}, which encodes the discriminant of the Galois representation on transcendental cohomology. 
The finite extension stipulated in \cite[Theorem~1(2)]{Charles} trivializes this character. 
This discriminant was studied earlier by de Jong and Katz in the case of even-dimensional hypersurfaces \cite{deJongKatz}.

\begin{example}
\label{ex:EJ}
Let $X'$ be the double cover of $\PP^2_{\Q}$ cut out by
$$w^2=xyz(2x+4y-3z)(x-5y-3z)(x+3y+3z),$$
whose minimal desingularization $X$ is a K3 surface over $\Q$. In \cite[Example 5.5]{EJSatoTate} Elsenhans and Jahnel show that $\rk \NS(X)=\rk \NS(\Xbar) =16$ and $E=\Q$, so that $\dim_E (\TT(X)_\Q)$ is even. They further show, using \cite[Corollary 4.7]{EJSatoTate} and computations for the jump character, that the finite field extension of \cite[Theorem 1(2)]{Charles} is trivial, so that there is a set of places $\mathcal{S}$ of $\Q$ of natural density one for which the Picard rank does not jump upon reduction, and hence any nontrivial Brauer class remains nontrivial.
\end{example}

\begin{remark}
In light of Example~\ref{ex:EJ}, it would be very interesting to show that, if one relaxes the requirement that $\calS(X,\alpha)$ contain a set of positive natural density to the weaker statement that it is \emph{infinite}, then Theorem~\ref{thm:main} holds when $E$ is a CM field or $\dim_E(\TT(X)_\Q)$ is even.  Recent work of Shankar, Shankar, Tang and Tayou~\cite{SSTT} suggests that such a statement may be within reach.

It would also be interesting to extend Theorem~\ref{thm:main} to cases where the jump character explains jumping of the Picard rank.   
\end{remark}

\subsection{Application: cubic fourfolds}

Special cubic fourfolds of certain discriminants comprise natural classes of fourfolds $Y \subset \PP^5_k$ to which one can associate a pair $(X,\alpha)$, where $X$ is a K3 surface and $\alpha \in \Br(X)$.  It is expected that a very general such $Y$ is irrational~\cite{Kuzcubic}; however, if $\alpha = 0$ then in some cases it is possible to show that $Y$ is rational (\cite{HPT}, \cite{AHTV}).  

Let $h$ be the restriction to $Y$ of a hyperplane class in $\PP^5$.  We consider $Y$ for which there exists a saturated lattice $K_d\subset \HH^2(Y,\Omega_Y^2) \subset \HH^4(Y,\Z)$ of rank $2$ equal to
\[
K_8=\left< h^2,P \right> \simeq \left( \begin{matrix} 3 & 1 \\ 1 & 3 \end{matrix}\right)\qquad\text{or}\qquad
K_{18}=\left< h^2,T \right> \simeq \left( \begin{matrix} 3 & 6 \\ 6 & 18 \end{matrix} \right).
\]
Write $\calC_{K_8}$ and $\calC_{K_{18}}$ for the respective moduli spaces of pairs $(Y,K_d)$.

\begin{theorem}[{$\leq$ Theorem~\ref{thm:4foldsapp}}]
\label{thm:application4folds}
Let $Y \subset \PP^5_k$ be a cubic fourfold over a number field.  Assume that $Y$ is a very general fourfold in $\calC_{K_8}$ or $\calC_{K_{18}}$.  Then there exists a set of places $\calS$ of $k$ of positive natural density for which the reduction $Y_{\frakp}$ is rational for every $\frakp \in \calS$.
\end{theorem}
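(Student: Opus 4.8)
The plan is to deduce this from Theorem~\ref{thm:main}. For $Y$ very general in $\calC_{K_8}$ or $\calC_{K_{18}}$ I would pass to its associated twisted K3 surface $(X,\alpha)$, verify that $(X,\alpha)$ meets the hypotheses of Theorem~\ref{thm:main}, apply that theorem to produce a positive-density set of places $\frakp$ with $\alpha_\frakp=0$, and then feed the triviality of $\alpha_\frakp$ into the rationality constructions of \cite{AHTV} (for discriminant $8$) and \cite{HPT} (for discriminant $18$) to conclude that $Y_\frakp$ is rational over $\kappa(\frakp)$.

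\emph{Passing to the twisted K3 surface.} For $Y$ very general in $\calC_{K_8}$, respectively $\calC_{K_{18}}$, Hassett's lattice-theoretic analysis of special cubic fourfolds together with the work of Kuznetsov and Huybrechts on their twisted derived categories attaches to $Y$ a polarized K3 surface $(X,f)$ of degree $2$ and a class $\alpha\in\Br(X)$ of order $2$, respectively $3$, whose $\alpha$-twisted transcendental Hodge structure is Hodge-isometric to the nonspecial cohomology of $Y$. For discriminant $8$ this pair is geometrically explicit: if $P\subset Y$ is the plane with $[P]\in K_8$, the blow-up $\widetilde Y=\Bl_PY$ carries a quadric surface bundle structure $\widetilde Y\to\PP^2$, the surface $X$ is its discriminant double cover, and $\alpha$ is the class of the even Clifford algebra restricted to $X$. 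Since $Y$ is very general, $X$ is a very general polarized K3 surface of degree $2$, so $\NS(X_\C)\isom\Z$; hence $(X,\alpha)$ satisfies the hypotheses of Theorem~\ref{thm:main} by Remark~\ref{rem:vgeneral}. The plane $P$, the surface $X$, the class $\alpha$ and the quadric bundle are all defined over $k$ once $Y$ is, because for $Y$ very general the relevant algebraic classes on $Y$ are Galois-stable; should a finite extension $k'/k$ nonetheless be needed, I would run the remaining argument over $k'$ and at the end intersect with the positive-density set of primes of $k$ splitting completely in the Galois closure of $k'/k$, over which the residue field does not grow, combining that Chebotarev condition with the one governing $\calS(X,\alpha)$.

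\emph{Reduction and descent of rationality.} Theorem~\ref{thm:main} yields a set $\calS(X,\alpha)$ of places of $k$ of positive natural density with $\alpha_\frakp=0\in\Br(X_\frakp)$. Discard the finitely many places of bad reduction for $Y$, for $X$, and for $P$, together with those of residue characteristic $2$ or $3$; what remains still has positive natural density. For such $\frakp$, the reduction $Y_\frakp$ is a smooth cubic fourfold over $\kappa(\frakp)$ containing the plane $P_\frakp$ and lying in $\calC_{K_8}$, respectively $\calC_{K_{18}}$, over $\kappa(\frakp)$; the quadric bundle and even Clifford algebra constructions spread out over $\OO_{k,\frakp}$, and so are compatible with this good reduction, whence the twisted K3 surface attached to $Y_\frakp$ is precisely $(X_\frakp,\alpha_\frakp)$. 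Because $\alpha_\frakp=0$ holds in $\Br(X_\frakp)$ itself, not merely after base change to an algebraic closure, the rationality criteria of \cite{AHTV} and \cite{HPT}, which hold over an arbitrary base field once the excluded primes are removed, apply over $\kappa(\frakp)$: triviality of the associated Brauer class forces the relevant quadric bundle, respectively del Pezzo fibration, to acquire a section, and $Y_\frakp$ is then rational over $\kappa(\frakp)$. This exhibits a set of places of $k$ of positive natural density for which $Y_\frakp$ is rational, as required.

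\emph{Where the difficulty lies.} The two load-bearing points are (i) the compatibility of the passage from a special cubic fourfold to its twisted K3 surface with reduction modulo $\frakp$, and (ii) the validity over the finite residue fields $\kappa(\frakp)$ --- away from a finite set of primes --- of the rationality constructions of \cite{AHTV} and \cite{HPT}, which are stated in characteristic zero; in particular one must check that these constructions produce a rational model over $\kappa(\frakp)$ itself and not only after an extension. The descent of $X$, $\alpha$ and $P$ to $k$ rather than a finite extension is a further, more routine bookkeeping matter handled by the Chebotarev argument above. Everything else is a direct assembly of Remark~\ref{rem:vgeneral}, Theorem~\ref{thm:main}, and the cited rationality theorems.
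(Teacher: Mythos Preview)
Your approach is essentially the paper's: attach the twisted K3 surface $(X,\alpha)$ to $Y$ via the quadric-bundle construction (discriminant $8$) or the sextic del Pezzo fibration (discriminant $18$), invoke Remark~\ref{rem:vgeneral} for very general $Y$ so that Theorem~\ref{thm:main} applies, and then read off rationality of $Y_\frakp$ from $\alpha_\frakp=0$. The paper packages the two rationality constructions as Propositions~\ref{prop:eight} and~\ref{prop:eighteen} and disposes of your ``load-bearing point (i)'' in one sentence by asserting that those constructions spread out over $\calO_{k,\calT}$, yielding a relative K3 surface and relative Brauer class.

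Two small corrections. First, you have the citations reversed: \cite{HPT} (together with \cite{HassettJAG} and \cite{HVAIM}) handles discriminant $8$, while \cite{AHTV} handles discriminant $18$. Second, in the discriminant-$18$ case the paper's more precise statement (Theorem~\ref{thm:4foldsapp}) assumes that $Y$ actually carries a fibration $\psi$ in sextic del Pezzo surfaces over $k$; as the paper itself remarks, it is not obvious that a $k$-point of $\calC_{K_{18}}$ automatically admits such a $\psi$ over $k$, so you should not treat this as coming for free from ``very general''.
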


\subsection{Application: derived equivalences}

Let $X$ be a K3 surface with a polarization $h$, and let $v=(r,c,s)\in \Z\oplus\NS(X) \oplus \Z$ be a primitive Mukai vector.  The moduli space $M$ of Gieseker $h$-semi-stable sheaves on $X$ of rank $r$, first Chern class $c$ and Euler characteristic $r + s$ is itself a K3 surface if $c^2 - 2rs = 0$ and $h$ is $v$-generic; see \S6~\cite{MukaiInvent}.  The space $M$ need not be fine: there is a natural Brauer class $\alpha \in \Br(M)$ that can obstruct the existence of a universal sheaf on $X \times M$.  However, there is a $k$-linear derived equivalence 
\[
D^b(M,\alpha) \isom D^b(X),
\]
first observed by C\u{a}ld\u{a}raru~\cite{Caldararu} in the case $k = \C$.  We call the pair $(M,\alpha)$ a \defi{twisted K3 surface} associated to $X$.

\begin{theorem}[{$\leq$ Theorem~\ref{thm:derapp}}]
\label{thm:applicationderived}
Let $X$ be a very general K3 surface of degree $2d$ over a number field $k$, and let $(M,\alpha)$ be an associated twisted K3 surface parametrizing geometrically stable sheaves on $X$.  Then there exists a set of places $\calS$ of $k$ of positive natural density such that for $\frakp\in \mathcal{S}$, the reduction $M_{\frakp}$ is a fine moduli space, and there is an $\F_{\frakp}$-linear derived equivalence $D^b(X_{\frakp})\cong D^b(M_{\frakp})$.
\end{theorem}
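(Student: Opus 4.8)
The plan is to derive Theorem~\ref{thm:applicationderived} from Theorem~\ref{thm:main} by checking that a very general K3 surface $X$ of degree $2d$ over a number field satisfies the hypotheses of that theorem, and that the class $\alpha \in \Br(M)$ is controlled by the same transcendental data. First I would recall that for a very general polarized K3 surface of degree $2d$ we have $\NS(X_\C) \isom \Z$, so by Remark~\ref{rem:vgeneral} the endomorphism algebra $E = \End_{\mathrm{Hdg}}(\TT(X)_\Q)$ equals $\Q$ (in particular is totally real) and $\dim_E(\TT(X)_\Q) = 21$ is odd. The moduli space $M = M_h(v)$ of Gieseker-stable sheaves with primitive Mukai vector $v = (r,c,s)$, $c^2 - 2rs = 0$, is again a K3 surface with $\NS(M_\C) \isom \Z$; under the Mukai isomorphism $\TT(X) \isom \TT(M)$ of Hodge structures (integrally, since $v$ is primitive and $\NS$ has rank one), $M$ inherits the same endomorphism algebra and transcendental dimension, so Theorem~\ref{thm:main} applies to $(M,\alpha)$.

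Next I would make precise how $\alpha$ governs the derived equivalence after reduction. Over $k$ we have the twisted derived equivalence $D^b(M,\alpha) \isom D^b(X)$ of C\u{a}ld\u{a}raru, valid because $(M,\alpha)$ is a moduli space of twisted sheaves with universal twisted sheaf serving as Fourier--Mukai kernel. The point is that when $\alpha = 0$ this universal sheaf is an honest (untwisted) sheaf, i.e.\ $M$ is a \emph{fine} moduli space, and the kernel descends to an untwisted Fourier--Mukai equivalence $D^b(X) \isom D^b(M)$. So I would invoke Theorem~\ref{thm:main} for $(M,\alpha)$ to produce a set $\calS$ of places $\frakp$ of positive natural density with $\alpha_\frakp = 0 \in \Br(M_\frakp)$; after removing finitely many bad places, at each such $\frakp$ both $X$ and $M$ have good reduction, the Mukai vector reduces to a $v$-generic primitive vector, and $M_\frakp \isom M_{h}(v)_{\F_\frakp}$ remains a moduli space of sheaves on $X_\frakp$. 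Then the vanishing of $\alpha_\frakp$ says exactly that a universal sheaf exists on $X_\frakp \times M_\frakp$, so $M_\frakp$ is fine and the Fourier--Mukai kernel gives the $\F_\frakp$-linear equivalence $D^b(X_\frakp) \cong D^b(M_\frakp)$.

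The main obstacle I anticipate is compatibility of the Brauer class with reduction: one must know that the class $\alpha_\frakp$ produced by the reduction map $\Br(M) \to \Br(M_\frakp)$ is genuinely the obstruction class to fineness of the reduced moduli problem $M_{h}(v)_{\F_\frakp}$, not just of the naive fiber $M_\frakp$. This requires a good integral model: one spreads out $X$, the polarization, and the Mukai vector over an open subscheme of $\Spec \sO_k$, forms the relative moduli space of sheaves, and checks that the relative obstruction class in $\Br$ of this model specializes correctly — so that vanishing of its fiber at $\frakp$ is equivalent to the existence of a universal sheaf over $\F_\frakp$. This is where one also uses that $h$ stays $v$-generic modulo $\frakp$ (true outside finitely many walls) so that stability, hence the moduli space being a K3 surface, is preserved. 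I would handle this by citing the relative theory of moduli of sheaves on K3 fibrations and the relative form of C\u{a}ld\u{a}raru's equivalence, and packaging the rest as a formal consequence of Theorem~\ref{thm:main}. The remaining steps — positive density of $\calS$, and the $\F_\frakp$-linearity of the equivalence — are then immediate from Theorem~\ref{thm:main} and the functoriality of the Fourier--Mukai construction.
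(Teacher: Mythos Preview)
Your proposal is correct and follows essentially the same route as the paper: apply Theorem~\ref{thm:main} to $(M,\alpha)$ after checking via the Mukai isomorphism $\TT(X)\isom\TT(M)$ that $M$ inherits the required Hodge-theoretic hypotheses from a very general $X$, spread out to a relative moduli space with a relative twisted universal sheaf so that the obstruction class specializes correctly, discard the finitely many places where semistability or $v$-genericity fails, and then invoke the (twisted) Fourier--Mukai equivalence over the residue field. The only point the paper makes more explicit than you do is that the derived equivalence $D^b(M,\alpha)\cong D^b(X)$ (and hence its untwisted specialization) genuinely holds over arbitrary fields, which it verifies by adapting the Bondal--Orlov criterion in positive characteristic; you should cite or reproduce that step rather than leave it implicit.
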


\subsection{Outine of the paper}
In \S\S\ref{s:ingredients}--\ref{s:proof} we present the proof of Theorem~\ref{thm:main}. It relies on 
extracting information on the Picard groups of reductions modulo places from the action of Frobenius on finite Galois modules. To draw
such conclusions, we need information about the Mumford-Tate groups. Other technical inputs include the integral Tate conjecture for K3 surfaces
over finite fields and open image theorems for K3 surfaces over number fields.  We present the two applications above in \S\S\ref{s:cubics}--\ref{s:derived}. In~\S\ref{s:cubics} we address specialization of rationality for cubic fourfolds; unfortunately, no smooth complex cubic fourfolds are known to be irrational.
In~\S\ref{s:derived} we illustrate how twisted derived equivalences of K3 surfaces specialize to finite fields.

\subsection*{Notation}
For a field $k$, we write $\kbar$ for a fixed algebraic closure of $k$. For a $k$-variety $X$, we let $\Xbar:=X\times_k \kbar$. When $k$ is a number field, we write $k_\frakp$ for the completion of $k$ with respect to the prime ideal $\frakp \subset \mathcal{O}_k$, and $\F_\frakp$ for the residue field. 

For a number field $k$, we say that a place $\frakp$ is a place of good reduction for a smooth proper $k$-variety $X$ there is a smooth proper morphism $\calX \to \Spec \calO_{k_\frakp}$ such that $\calX_{k_{\pp}}\isom X_{k_\pp}$. In this case, we write $X_\frakp$ for the closed fiber over $\F_\frakp$. We say that a place is finite if its residue field is finite.

For an endomorphism $f\colon M \to M$ of a free module $M$ over a ring $R$, we write $\mathfrak{E}(f,\lambda)$ for the eigenspace of $\lambda \in R$, where $\lambda$ is not a zero divisor.

\subsection*{Acknowledgments} We thank Ravi Vakil for asking the third named author whether a statement like Theorem~\ref{thm:application4folds} could be true at the 2015 Arizona Winter School.  We thank Nicolas Addington, Martin Bright, Jean-Louis Colliot-Th\'el\`ene, Edgar Costa, Ofer Gabber, Daniel Huybrechts, Evis Ieronymou, Daniel Loughran, and Yuri Tschinkel for valuable mathematical comments and discussions, and Isabel Vogt for pointing out the reference \cite{AIMPL}. We also thank an anonymous referee for valuable comments on an earlier version of this paper.

\section{Ingredients for the proof}
\label{s:ingredients}

\subsection{Mumford-Tate groups}

Let 
\[
\DT:=\mathrm{Res}_{\C/\R}\G_{m,\C}
\] be the \defi{Deligne torus}, and write $w\colon \G_{m,\R} \to \DT$ for the weight cocharacter, which is given on $\R$-points by the natural inclusion $\R^\times = \G_{m,\R}(\R) \hookrightarrow \DT(\R) = \C^\times$.  Given a finite-dimensional $\Q$-vector space $V$, a \defi{$\Q$-Hodge structure of weight $m$} on $V$ determines and is determined by a representation $h \colon \DT \to \mathrm{GL}(V_\R)$ such that $h\circ w$ is given on $\R$-points by $a \mapsto a^{-m}\cdot \Id_V$. 
A \defi{$\Z$-Hodge structure} is defined analogously, starting with a free $\Z$-module $V$ of finite rank. We refer to a $\Q$- or a $\Z$-Hodge structure as simply a Hodge structure to avoid clutter.

\begin{example}
For a smooth projective complex variety $X$, the singular cohomology $V:=\HH^{m}(X(\C),\Q)$ gives rise to a $\Q$-Hodge structure of weight $m$. The intersection pairing, appropriately modified by a sign, defines a polarization on $\HH^{\dim X}(X(\C),\Q)$.  When $m = 2n$ is even, applying an $n$-fold Tate twist, we obtain a Hodge structure $V:=\HH^{2n}(X(\C),\Q(n))$ of weight $0$. 
\end{example}

\begin{example}
\label{ex:K3cohomology}
For a complex K3 surface $X$ the $\Q$-Hodge structure $\HH = \HH^2(X(\C),\Q(1))$ of weight $0$ arising from singular cohomology splits as a direct sum $\NS(X)_{\Q}(1) \oplus \TT(X)_{\Q}(1)$.  The vector space $\TT(X)_\Q$ itself carries a polarized Hodge structure, polarized by the restriction $\phi$ to $\TT(X)_\Q$ of the cup product on $\HH^2(X(\C),\Q)$. The ring $\calO := \End_{\Hdg}(\TT(X))$ of integral Hodge endomorphisms is an order of the endomorphism algebra $E:=\End_{\Hdg}\left(\TT(X)_\Q\right)$.  In \cite[Theorem 1.5.1]{Zarhin}, Zarhin shows that $E$ is either a totally real or a CM field. 
\end{example}

\begin{defn}
\label{defn:MTgps}
For a $\Q$-Hodge structure $V$, the \defi{Mumford-Tate group} $\MT(V)$ of $V$ is the smallest algebraic subgroup of $\mathrm{GL}(V)$ over $\Q$ such that $h(\DT(\R))\subset \MT(V)(\R)$. For a $\Z$-Hodge structure $V$, the \defi{integral Mumford-Tate group} $\mathcal{MT}(V)$ of $V$ is the group subscheme of $\mathrm{GL}(V)$ over $\Z$ constructed as the Zariski closure of $\MT(V_\Q)$ in $\mathrm{GL}(V)$.
\end{defn}

When $V$ is polarizable, $\MT(V)$ is a reductive connected linear algebraic group over $\Q$ \cite[Prop.~2]{Schnell}. When $X$ is a complex K3 surface, $\MT(\HH^2(X(\C),\Q))$ admits a representation into an orthogonal group of dimension 22, and $\MT(\HH^2(X(\C),\Q))\cong \MT(\TT(X)_\Q)$.

\begin{theorem}[\cite{Zarhin}]
\label{thm:zarhin}
Let $X$ be a complex K3 surface such that the endomorphism algebra $E$ is a totally real field. Then $\MT(\TT(X)_\Q)$ is isomorphic to the centralizer of $E$ in the group of orthogonal similitudes $\GO(\TT(X)_\Q,\phi)$.
\end{theorem}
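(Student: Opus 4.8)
The plan is to establish the two inclusions $\MT(T)\subseteq C$ and $C^{\circ}\subseteq\MT(T)$, where $T:=\TT(X)_\Q$ and $C$ is the centralizer of $E$ in $\GO(T,\phi)$; the (mild) gap between $C$ and its identity component $C^{\circ}$ is harmless, and in fact vanishes in the cases that matter for Theorem~\ref{thm:main}, e.g.\ when $E=\Q$ and $\dim_\Q T$ is odd, in which case $C=\GO(T,\phi)$ is already connected.

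The containment $\MT(T)\subseteq C$ is formal. The cup product on $\HH^2(X(\C),\Q)$ restricts to a morphism of $\Q$-Hodge structures $\phi\colon T\otimes T\to\Q(-2)$, so $\MT(T)$ — being the Zariski closure of $h(\DT(\R))$ — acts compatibly with $\phi$, hence preserves it up to a scalar character and lies in $\GO(T,\phi)$. Similarly, each $e\in E=\End_{\Hdg}(T)$ is a Hodge class in $\End(T)=T^{\vee}\otimes T$, so it is fixed by the conjugation action of $\MT(T)$; thus $\MT(T)$ centralizes $E$.

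For the reverse inclusion I would first unwind $C$ concretely. Since $E$ is totally real, the Rosati involution of $(T,\phi)$ restricts to the identity on $E$, i.e.\ $\phi(ex,y)=\phi(x,ey)$ for all $e\in E$; combined with nondegeneracy of the trace form of $E/\Q$, this produces a unique symmetric $E$-bilinear form $\psi\colon T\times T\to E$ with $\phi=\Tr_{E/\Q}\circ\psi$. Feeding $ex$ back in for $x$ and invoking nondegeneracy of $\Tr_{E/\Q}$ once more shows that an $E$-linear automorphism $g$ lies in $C$ exactly when $\psi(gx,gy)=\mu(g)\,\psi(x,y)$ for some $\mu(g)\in\Q^{\times}$; hence $C$ is the $\Q$-group of $E$-linear $\psi$-similitudes with rational multiplier, and $C^{\circ}=\Res_{E/\Q}\SO(T,\psi)\cdot(\G_m\cdot\Id_T)$ as an almost direct product. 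The crucial input is then Zarhin's determination of the Hodge group: for $E$ totally real, $\Hdg(T)=\Res_{E/\Q}\SO(T,\psi)$. Granting this, since $T$ has weight $2\neq 0$ the weight cocharacter $h\circ w\colon a\mapsto a^{-2}\Id_T$ has Zariski-dense image in the homotheties $\G_m\cdot\Id_T$; because $\DT(\R)$ is generated by the image of $w$ together with the norm-one subgroup defining $\Hdg(T)$, we conclude $\MT(T)=\Hdg(T)\cdot(\G_m\cdot\Id_T)=C^{\circ}$, which is the assertion.

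I expect the main obstacle to be Zarhin's Hodge-group computation, which is precisely where the K3 hypothesis enters through the condition $h^{2,0}(X)=1$. After base change to $\Qbar$ one has $\Res_{E/\Q}\SO(T,\psi)_{\Qbar}\cong\prod_{\tau\colon E\hookrightarrow\Qbar}\SO_d$ with $d=\dim_E T$, and $T\otimes_\Q\Qbar$ breaks up accordingly into $[E:\Q]$ irreducible $\Hdg(T)_{\Qbar}$-representations of dimension $d$ (this irreducibility, and the count of Galois-conjugate constituents, comes from $T$ being a simple Hodge structure with field of Hodge endomorphisms $E$). One must then show that the Hodge cocharacter $\mu$, whose only nonzero weights on $T_\C$ are $\pm 1$ each with \emph{multiplicity one} (exactly because $h^{2,0}=1$), together with its $\MT(T)$-conjugates already generates each factor $\mathfrak{so}_d$ — this uses $d\ge 3$ and a short analysis of the connected subgroups of $\SO_d$ that act irreducibly and contain such a cocharacter — and then, since $E$ is a field (so $\Gal(\Qbar/\Q)$ permutes the $\tau$'s transitively) while $\MT(T)$ is defined over $\Q$, that all of $\prod_\tau\mathfrak{so}_d$ is reached. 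If one simply imports this step from \cite{Zarhin}, everything above reduces to routine group theory.
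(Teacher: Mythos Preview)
Your argument is essentially the paper's: both reduce to Zarhin's identification $\Hdg(T)\cong\Res_{E/\Q}\SO_E(T,\Phi)$ and then pass to $\MT(T)$ by adjoining the homotheties $\G_m\cdot\Id_T$. The only point of divergence is the possible gap $C^{\circ}\subsetneq C$, which you flag as a caveat (and close in the odd-dimensional case needed for Theorem~\ref{thm:main}); the paper instead dispatches it in general by citing \cite[Theorem~6.4.7(i)]{Springer} for the connectedness of the centralizer of the torus $\Res_{E/\Q}\G_m$.
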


\begin{proof}
Recall that the {\em Hodge group} $\mathrm{Hdg}(\TT(X)_{\Q})\subset \MT(\TT(X)_{\Q})$ is the smallest algebraic subgroup defined
over $\Q$ containing $h(\U(\R))$, where $\U \subset \DT$ is the unit circle \cite[p.~196]{Zarhin}.
In \cite[\S 2.1, Theorem 2.2.1]{Zarhin}, Zarhin shows that there is a unique $E$-bilinear form $\Phi \colon \TT(X)_\Q \times \TT(X)_\Q \to E$ such that $\phi = \Tr_{E/\Q}(\Phi)$, and that $\mathrm{Hdg}(\TT(X)_\Q)$ is isomorphic to the Weil restriction of scalars $\Res_{E/\Q}(\SO_E(\TT(X)_\Q,\Phi))$. 
This is clearly contained in the centralizer of $E$ in $\SO(\TT(X)_\Q,\phi)$; Zarhin's proof shows the reverse inclusion. 
By \cite[Theorem 6.4.7(i)]{Springer}, the centralizer of $E$ is connected. Thus the assertion for $\MT(\TT(X)_\Q)$ follows. 
\end{proof}

\begin{cor}
\label{cor:monodromygpTX1}
If $E$ is totally real, then the Mumford-Tate group $\MT(\TT(X)_\Q(1))$ is isomorphic to the centralizer of $E$ in the special orthogonal group $\SO(\TT(X)_\Q,\phi_\Q)$. 
\end{cor}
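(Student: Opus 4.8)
The plan is to reduce the statement to Zarhin's computation of the Hodge group carried out in the proof of Theorem~\ref{thm:zarhin}, using the general principle that a Tate twist to weight zero collapses the Mumford--Tate group onto the Hodge group.

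Write $V := \TT(X)_\Q$ for the polarized weight-$2$ Hodge structure with Hodge cocharacter $h\colon \DT \to \GL(V_\R)$, and $V(1) := \TT(X)_\Q(1)$ for its Tate twist; the two share the same underlying $\Q$-vector space, and the Hodge cocharacter of $V(1)$ is $h_{(1)}(z) = N(z)\cdot h(z)$, where $N\colon \DT \to \G_{m,\R}$ is the norm character, acting on $V_\R$ through homotheties. First I would record the two elementary facts about $h_{(1)}$: since $V(1)$ has weight $0$, the composite $h_{(1)}\circ w$ is trivial; and since $N$ is identically $1$ on the unit circle $\U(\R)\subset \DT(\R)$, we have $h_{(1)}|_{\U(\R)} = h|_{\U(\R)}$. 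Because $\DT(\R)=\C^\times = \U(\R)\cdot w(\G_{m,\R}(\R))$ and $h_{(1)}$ is a homomorphism killing $w(\G_{m,\R}(\R))$, it follows that
\[
h_{(1)}(\DT(\R)) \;=\; h_{(1)}(\U(\R)) \;=\; h(\U(\R)).
\]
Consequently $\MT(\TT(X)_\Q(1))$ --- by definition the smallest $\Q$-algebraic subgroup of $\GL(V)$ containing $h_{(1)}(\DT(\R))$ --- coincides with the smallest $\Q$-algebraic subgroup of $\GL(V)$ containing $h(\U(\R))$, which is exactly the Hodge group $\Hdg(\TT(X)_\Q)$ appearing in the proof of Theorem~\ref{thm:zarhin}.

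To finish, I would invoke that proof: following Zarhin in the totally real case, $\Hdg(\TT(X)_\Q)$ is isomorphic to $\Res_{E/\Q}\big(\SO_E(\TT(X)_\Q,\Phi)\big)$, and this is precisely the centralizer of $E$ inside $\SO(\TT(X)_\Q,\phi_\Q)$ (the same identification used there to pass from the Hodge group to $\MT(\TT(X)_\Q)\cong Z_{\GO(\phi)}(E)$, only now with $\GO$ replaced by $\SO$). Chaining the two equalities gives $\MT(\TT(X)_\Q(1)) = \Hdg(\TT(X)_\Q) = Z_{\SO(\phi_\Q)}(E)$, as claimed.

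I do not expect a genuine obstacle here: the entire content is the observation that twisting to weight $0$ removes the similitude factor, so that $\SO$ takes the place of $\GO$ relative to Theorem~\ref{thm:zarhin}. The one thing to be careful about is bookkeeping with sign conventions for the Tate twist and for the weight cocharacter --- and even there the argument is insensitive to the sign of the twist, since all that is actually used is that $N$ is trivial on $\U$ and that $h_{(1)}$ has weight $0$.
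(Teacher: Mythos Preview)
Your argument is correct and follows essentially the same route as the paper. The paper's proof compresses your computation $\MT(\TT(X)_\Q(1))=\Hdg(\TT(X)_\Q)$ into the single citation ``$\MT(\TT(X)_\Q(1))\subset\SL(\TT(X)_\Q)$ by \cite[Prop.~2(i)]{Schnell}'' and then appeals to Theorem~\ref{thm:zarhin} (whose proof already records $\Hdg(\TT(X)_\Q)=Z_{\SO(\phi)}(E)$); you have simply unpacked that step by hand via the polar decomposition $\DT(\R)=\U(\R)\cdot w(\R^\times_{>0})$ and the triviality of $N$ on $\U$.
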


\begin{proof}
Since $\TT(X)_\Q(1)$ is a Hodge structure of weight 0, we know by \cite[Proposition 2(i)]{Schnell} that $\MT(\TT(X)_\Q(1)) \subset \SL(\TT(X)_\Q)$. Now the result follows from Theorem \ref{thm:zarhin}. \end{proof}

\subsection{$\ell$-adic representations and the Mumford-Tate conjecture}
\label{ss:elladic reps}

Let $X$ be a smooth projective variety defined over a number field $k$.  Fix a prime $\ell$, and let 
\begin{equation}
\label{eq:rhoell}
\rho_\ell \colon \Gal(\bar{k}/k) \to \GL(\HH^{2i}_{\et}(\Xbar,\Z_\ell(i)))
\end{equation}
be the $\ell$-adic Galois representation 
arising from the action of the absolute Galois group $\Gal(\bar{k}/k)$ on the cohomology group $\HH^{2i}_{\et}(\Xbar,\Z_\ell(i))$. The Zariski closure of $\im(\rho_\ell)$ is the \defi{$\ell$-adic algebraic monodromy group}, which we denote by $\mathcal{G}_\ell$. Let $G_\ell$ denote the generic fiber of $\mathcal{G}_\ell$, which is the Zariski closure of the image of the Galois representation for $\HH^{2i}_{\et}(\Xbar,\Q_\ell(i))$. 
The Mumford-Tate Conjecture predicts a connection between the Mumford-Tate group of $\HH := \HH^{2i}(X(\C),\Q(i))$, whose formulation is Hodge-theoretic, and the $\ell$-adic algebraic monodromy group, defined arithmetically.

\begin{conj}[Mumford-Tate Conjecture, \cite{Serreletter}]
\label{conj:MT}
Under the comparison isomorphism $\HH^{2i}_{\et}(\Xbar,\Q_\ell(i))\cong \HH \otimes \Q_\ell$, the Mumford-Tate group $\MT(\HH)\times_\Q \Q_\ell$ is isomorphic (as an algebraic group) to the identity component $G_\ell^\circ$ of the $\ell$-adic algebraic monodromy group.
\end{conj}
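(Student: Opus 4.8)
This is the Mumford--Tate Conjecture, which is open in general; any candidate proof necessarily splits into the two inclusions $G_\ell^\circ \subseteq \MT(\HH)_{\Q_\ell}$ and $\MT(\HH)_{\Q_\ell} \subseteq G_\ell^\circ$. The first I would obtain exactly when the Hodge classes in every mixed tensor power $\HH^{\otimes a}\otimes(\HH^\vee)^{\otimes b}$ of weight $0$ are absolutely Hodge: their $\ell$-adic \'etale realizations are then $\Gal(\bar k/k)$-invariant, and since by Chevalley's theorem $\MT(\HH)$ is precisely the $\Q$-subgroup of $\GL(\HH)$ stabilizing the line through each such Hodge tensor, the whole image of $\rho_\ell$ lands in $\MT(\HH)_{\Q_\ell}$ and hence so does $G_\ell^\circ$. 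This hypothesis holds for $X$ an abelian variety by Deligne's theorem and, through the Kuga--Satake construction, for K3 surfaces and hyperk\"ahler varieties; for a general smooth projective $X$ it is unknown, and this is already a serious gap.

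The substantive content is the reverse inclusion $\MT(\HH)_{\Q_\ell}\subseteq G_\ell^\circ$, which I would attack by the now-standard three-step program. \emph{(i) Reductivity.} One must know that $G_\ell^\circ$ is reductive: for abelian varieties this is Faltings's semisimplicity theorem, and for K3 surfaces it is transported along Kuga--Satake, but it is open in general and without it the remaining steps have no traction. \emph{(ii) Rank.} By Chebotarev the Frobenius conjugacy classes are Zariski dense in $\mathcal{G}_\ell$; for a density-one set of places the associated Frobenius torus (in the sense of Serre) realizes the rank of $G_\ell^\circ$, and matching its eigenvalue data against the explicit description of $\MT(\HH)$ --- for K3 surfaces, Zarhin's identification of $\MT(\TT(X)_\Q)$, and via Corollary~\ref{cor:monodromygpTX1} of $\MT(\TT(X)_\Q(1))$, with the centralizer of the endomorphism field $E$ in an orthogonal (similitude) group --- forces $\rank G_\ell^\circ = \rank \MT(\HH)$. \emph{(iii) Saturation.} Finally one upgrades ``reductive subgroup of full rank'' to ``equality'' using the representation theory of the pair $(\MT(\HH),\HH)$: minusculeness of the relevant weights, $E$-irreducibility of $\TT(X)_\Q$, and a classification of the connected reductive subgroups of $\MT(\HH)_{\Q_\ell}$ containing a maximal torus and compatible with the tensors already known to be Galois-fixed.

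The main obstacle is step \emph{(iii)} in the absence of extra structure: equal-rank containment of reductive groups does not in general imply equality, so every known instance of the conjecture is proved family by family --- Serre and Pink for abelian varieties in various ranges, Tankeev and Andr\'e for K3 surfaces, and Andr\'e, Moonen--Zarhin, Vasiu, Floccari and others for hyperk\"ahler varieties, products, and general abelian motives. I would therefore not attempt the conjecture outright, but establish it only where the rest of the paper consumes it, namely for $i=1$: then $\HH=\HH^2(X(\C),\Q(1))\cong \TT(X)_\Q(1)\oplus\NS(X)_\Q(1)$, the factor $\NS(X)_\Q(1)$ is entirely Hodge, $\MT(\HH)\cong\MT(\TT(X)_\Q(1))$ is the $E$-centralizer in $\SO(\TT(X)_\Q,\phi_\Q)$ by Corollary~\ref{cor:monodromygpTX1}, reductivity and the Frobenius-torus computation are available via Kuga--Satake, and the orthogonal, minuscule shape of $\TT(X)_\Q$ makes the saturation step tractable --- this is the Tankeev--Andr\'e theorem, which is all that Theorem~\ref{thm:main} requires.
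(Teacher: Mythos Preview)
The statement you were asked to prove is labeled \textbf{Conjecture}~\ref{conj:MT} in the paper, not a theorem: the paper does not prove it and offers no proof. It is stated as background, and the paper then immediately records (Theorem~\ref{thm:Tankeev}) that the conjecture is known for K3 surfaces by Tankeev and Andr\'e, which is the only case consumed downstream.

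Your proposal correctly recognizes all of this: you flag that the conjecture is open in general, sketch the standard two-inclusion/three-step program by which the known cases are obtained, and then observe that the paper only needs $i=1$ for a K3 surface, where the result is the Tankeev--Andr\'e theorem. That is exactly the paper's position, except that the paper simply \emph{cites} Tankeev and Andr\'e rather than re-deriving their argument. Your outline of the proof strategy (absolute Hodge classes for $G_\ell^\circ\subseteq\MT$, Faltings/Kuga--Satake for reductivity, Serre's Frobenius tori for rank, Zarhin's description of $\MT$ for the saturation step) is an accurate summary of how those papers proceed, but it is commentary rather than something the present paper undertakes.
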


The conjecture has been proved for K3 surfaces over number fields:

\begin{theorem}[\cite{TankeevI}, \cite{TankeevII}, and independently \cite{Andre}]
\label{thm:Tankeev}
The Mumford-Tate conjecture holds for K3 surfaces over number fields: that is, for a K3 surface $X$ defined over a number field, we have 
\begin{equation}
G_\ell^\circ \isom\MT(\HH)\times_\Q \Q_\ell \qquad \text{for}\qquad \HH=\HH^2(X(\C),\Q(1)).
\tag*{\qed}
\end{equation}
\end{theorem}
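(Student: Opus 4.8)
The plan is to transport the statement, via the Kuga--Satake construction, to abelian varieties --- where Deligne's work on absolute Hodge cycles and Faltings' theorems are available --- and then to force the $\ell$-adic algebraic monodromy group to be maximal using Zarhin's description of the Mumford--Tate group (Theorem~\ref{thm:zarhin} and its CM counterpart in \cite{Zarhin}). Write $\HH=\NS(X_\C)_\Q(1)\oplus \TT(X)_\Q(1)$. The group $\MT(\HH)$ is generated by $h(\DT(\R))$, which acts as the identity on the purely $(0,0)$ summand $\NS(X_\C)_\Q(1)$; hence $\MT(\HH)=\MT(\TT(X)_\Q(1))$ under the induced identification. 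After replacing $k$ by a finite extension --- which changes neither $\MT$ nor the identity component $G_\ell^\circ$ --- the Tate conjecture for divisors on K3 surfaces over number fields (itself established along the way, by feeding the Kuga--Satake construction into Faltings' theorems) shows that the $\ell$-adic representation on $\HH^2_{\et}(\Xbar,\Q_\ell(1))$ preserves the analogous orthogonal decomposition and that $G_\ell^\circ$ acts trivially on the algebraic summand. It therefore suffices to prove $G_\ell^\circ\isom \MT(\TT(X)_\Q(1))\times_\Q\Q_\ell$ inside $\GL(\TT(X)_{\Q_\ell}(1))$.

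\textbf{Kuga--Satake and the inclusion $G_\ell^\circ\subseteq \MT\otimes\Q_\ell$.} Attach to $X$ its Kuga--Satake abelian variety $A$, defined over a finite extension of $k$; there is an embedding of rational Hodge structures $\TT(X)_\Q(1)\hookrightarrow \End(\HH^1(A(\C),\Q))$ cut out by a projector which, by Deligne's theorem that the Kuga--Satake correspondence is absolutely Hodge, induces a Galois-equivariant embedding $\TT(X)_{\Q_\ell}(1)\hookrightarrow \End(\HH^1_{\et}(\overline{A},\Q_\ell))$ compatible with the Betti realization under the comparison isomorphisms. Since this projector is absolutely Hodge and, by Deligne, every Hodge cycle on an abelian variety is absolutely Hodge, each Hodge cycle in each tensor construction $\TT(X)_\Q(1)^{\otimes a}\otimes(\TT(X)_\Q(1)^{\vee})^{\otimes b}$ is fixed by an open subgroup of $\Gal(\kbar/k)$, hence by $G_\ell^\circ$. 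As $\MT(\TT(X)_\Q(1))$ is the subgroup of $\GL(\TT(X)_\Q(1))$ fixing all such Hodge cycles, the Tannakian formalism yields $G_\ell^\circ\subseteq \MT(\TT(X)_\Q(1))\times_\Q\Q_\ell$.

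\textbf{The reverse inclusion.} Faltings' semisimplicity theorem together with the Tate conjecture for endomorphisms of $A$ imply that $\TT(X)_{\Q_\ell}(1)$ is a semisimple Galois module, that $G_\ell$ is reductive, and that the commutant of $G_\ell^\circ$ in $\End(\TT(X)_{\Q_\ell}(1))$ is exactly $E\otimes_\Q\Q_\ell$ --- which, by the previous inclusion and Zarhin's theorem, already coincides with the commutant of $\MT$. Hence $G_\ell^\circ$ is a connected reductive subgroup of $\MT(\TT(X)_\Q(1))\times_\Q\Q_\ell$ with the same commutant and contained in $\SO(\TT(X)_{\Q_\ell}(1),\phi)$. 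It remains to upgrade this inclusion to an equality, and this is the genuine difficulty: reductivity of $G_\ell$, the matching of commutants, and even the agreement of all tensor invariants do not by themselves exclude $G_\ell^\circ$ being a proper exceptional subgroup --- for instance an $\SL_2$ acting through a symmetric power of the standard representation. One must exploit the rigidity of the K3-type Hodge structure. By Zarhin's explicit description, after base change $\MT(\TT(X)_\Q(1))$ becomes a product of special orthogonal (resp.\ unitary) groups acting through their standard representations, and the Hodge cocharacter of the K3-type structure restricts to a very special, ``minuscule-type'' cocharacter with eigenvalues $z,1,\dots,1,z^{-1}$ on each factor; on the other hand $\TT(X)_{\Q_\ell}(1)$, being the transcendental part of $\HH^2$ of a K3 surface, is de Rham at places above $\ell$ with Hodge--Tate weights in $\{-1,0,1\}$, each of $\pm 1$ occurring once, so $G_\ell^\circ$ contains a cocharacter conjugate over $\overline{\Q}_\ell$ to this minuscule one. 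Feeding this into the classification of connected reductive subgroups of classical groups that act irreducibly on the standard representation excludes the exceptional possibilities and forces $G_\ell^\circ=\MT(\TT(X)_\Q(1))\times_\Q\Q_\ell$.

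\textbf{Main obstacle.} The crux is precisely this last step, where the rigidity coming from the K3-type weight is used to pin down the size of $G_\ell^\circ$: it requires Faltings' theorems (to make $G_\ell$ reductive and compute its commutant), Deligne's absolute-Hodge theorem (to relate the two groups at all), Zarhin's structure theorem (to know what $\MT$ and the Hodge cocharacter look like), and the de Rham property with the correct Hodge--Tate weights --- none of which is formal. The argument is carried out in \cite{TankeevI} and \cite{TankeevII}, and from the viewpoint of motivated cycles in \cite{Andre}.
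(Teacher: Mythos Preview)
The paper does not give its own proof of this theorem: the statement carries a \qed in the displayed equation and is attributed to \cite{TankeevI}, \cite{TankeevII}, and \cite{Andre}, so it is quoted as a black box from the literature. There is therefore no in-paper argument to compare against.

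That said, your sketch is a faithful outline of the proofs in the cited references. The architecture---reduce to the transcendental lattice, use the Kuga--Satake correspondence together with Deligne's absolute-Hodge theorem to obtain $G_\ell^\circ\subseteq \MT\otimes\Q_\ell$, then invoke Faltings (semisimplicity and Tate for abelian varieties) to get reductivity of $G_\ell$ and to compute its commutant, and finally use Zarhin's description of $\MT$ plus the minuscule nature of the K3 Hodge cocharacter / Hodge--Tate weights $\{-1,0,1\}$ to exclude proper reductive subgroups---is exactly the strategy of Tankeev and of Andr\'e (the latter phrasing things in terms of motivated cycles). Your identification of the ``main obstacle'' is also accurate: the delicate point is the reverse inclusion, where one must rule out that $G_\ell^\circ$ is, say, an $\SL_2$ sitting inside $\SO$ via a symmetric power, and this is where the minuscule-weight argument does real work. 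One small caveat: the Tate conjecture for divisors on K3 surfaces over number fields that you invoke in the first paragraph is itself a consequence of this circle of ideas (Kuga--Satake plus Faltings), so it is not an independent input but part of the same package---you say as much parenthetically, but it is worth being explicit that the logical order is Kuga--Satake $\Rightarrow$ Faltings for $A$ $\Rightarrow$ Tate for divisors on $X$, not the reverse.
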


\subsection{The Integral Mumford-Tate conjecture}

For a smooth projective variety $X$ defined over a number field $k$, the Mumford-Tate conjecture can also be stated integrally: $\mathcal{MT}(\HH^{2i}(X(\C),\Z(i))\times_\Z \Z_\ell$ is isomorphic to $\mathcal{G}^\circ_\ell$. This version of the conjecture is equivalent to statement above: given the isomorphism of $\Q_\ell$-group schemes, taking their Zariski closures gives the $\Z_\ell$-isomorphism, and given the isomorphism of $\Z_\ell$-group schemes, take the generic fibers.  For a thorough and illuminating discussion of this version of the Mumford-Tate conjecture, as well as other variants, see~\cite{CadoretMoonen}.

\subsection{Open image theorems} 
From the integral version of the Mumford-Tate conjecture for K3 surfaces over number fields, we would like to make a conclusion about how a component of the image of $\rho_\ell$ sits inside $\mathcal{G}^\circ_\ell\cong \mathcal{MT}(\HH^{2}(X(\C),\Z(1)))\times_\Z \Z_\ell$.

First, as a consequence of the Hodge-Tate decomposition \cite{Faltings}, the representation~\eqref{eq:rhoell} is of Hodge-Tate type. By \cite[Theorem 1]{Bogomolov}, it follows
that $\im(\rho_\ell)$ is open in $G_\ell(\Q_\ell)$. Since there is a finite field extension $k^{\conn}/k$ for which after base changing to $k^{\conn}$, the image of the Galois representation is connected, we know that $G_\ell$ has finitely many connected components. Thus $\im(\rho_\ell)$ has a finite index subgroup that is open in $G_\ell^\circ(\Q_\ell)$. By construction $G_\ell^\circ(\Q_\ell)$ is open in $\mathcal{G}_\ell^\circ(\Z_\ell)$, so the integral version of the Mumford-Tate conjecture for K3 surfaces implies:

\begin{cor}
\label{cor:imageopen}
If $X$ is a K3 surface over a number field, then $\im(\rho_\ell)$ has a finite index subgroup that is isomorphic to an open subgroup of $\mathcal{MT}(\HH^{2}(X(\C),\Z(1)))\times_\Z \Z_\ell(\Z_\ell)$.
\qed
\end{cor}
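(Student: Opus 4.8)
The plan is to bundle the results recalled above into one statement about $\ell$-adic topological groups, where $X$ is our K3 surface and $\rho_\ell$ is the representation \eqref{eq:rhoell} acting on $\HH^2_{\et}(\Xbar,\Z_\ell(1))$. First I would invoke Faltings' Hodge--Tate decomposition \cite{Faltings} to record that $\rho_\ell$ is of Hodge--Tate type; this is precisely the input required for Bogomolov's theorem \cite[Theorem~1]{Bogomolov}, which then gives that $\im(\rho_\ell)$ is an \emph{open} subgroup of $G_\ell(\Q_\ell)$, the group of $\Q_\ell$-points of the generic fiber of the $\ell$-adic algebraic monodromy group $\mathcal{G}_\ell$.

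Next I would pass to the identity component. Since there is a finite extension $k^{\conn}/k$ over which the Galois image is connected, $G_\ell$ has only finitely many connected components, so $G_\ell^\circ(\Q_\ell)$ has finite index in $G_\ell(\Q_\ell)$. Hence $H := \im(\rho_\ell) \cap G_\ell^\circ(\Q_\ell)$ has finite index in $\im(\rho_\ell)$, and because $\im(\rho_\ell)$ was open in $G_\ell(\Q_\ell)$, the subgroup $H$ is open in $G_\ell^\circ(\Q_\ell)$. Moreover $\im(\rho_\ell)$ is contained in $\mathcal{G}_\ell(\Z_\ell)$ by the very construction of $\mathcal{G}_\ell$ as a Zariski closure inside $\GL_n$ over $\Z_\ell$, so $H$ is contained in $\mathcal{G}_\ell^\circ(\Z_\ell)$, which is a compact open subgroup of $G_\ell^\circ(\Q_\ell)$; therefore $H$ is an open subgroup of $\mathcal{G}_\ell^\circ(\Z_\ell)$.

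Finally I would feed in the integral Mumford--Tate conjecture for K3 surfaces. As explained above, taking Zariski closures in Theorem~\ref{thm:Tankeev} gives an isomorphism of $\Z_\ell$-group schemes $\mathcal{G}_\ell^\circ \cong \mathcal{MT}(\HH^2(X(\C),\Z(1))) \times_\Z \Z_\ell$; transporting $H$ along the induced isomorphism of $\Z_\ell$-points exhibits it as an open subgroup of $\mathcal{MT}(\HH^2(X(\C),\Z(1))) \times_\Z \Z_\ell(\Z_\ell)$. Since $H$ has finite index in $\im(\rho_\ell)$, this is exactly the assertion of the corollary.

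The deep mathematical content is entirely imported --- Faltings, Bogomolov, Tankeev--Andr\'e, and the formal equivalence between the rational and integral forms of the Mumford--Tate conjecture --- so there is no genuine obstacle; the only thing to watch is the interplay between the $\Q_\ell$-adic and $\Z_\ell$-adic topologies, namely making sure that ``open'' is preserved when one intersects the open subgroup $H \subset G_\ell^\circ(\Q_\ell)$ with the compact open subgroup $\mathcal{G}_\ell^\circ(\Z_\ell)$, and that the finiteness of the component group (hence the ``finite index'' clause) survives this intersection. Both are routine once one keeps careful track of the integral structures.
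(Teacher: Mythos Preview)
Your proposal is correct and follows essentially the same argument as the paper: Faltings' Hodge--Tate decomposition feeds into Bogomolov's openness theorem, finiteness of the component group gives the finite-index open subgroup of $G_\ell^\circ(\Q_\ell)$, and the integral Mumford--Tate conjecture for K3 surfaces (via Tankeev--Andr\'e) identifies $\mathcal{G}_\ell^\circ$ with $\mathcal{MT}(\HH^2(X(\C),\Z(1)))\times_\Z \Z_\ell$. If anything, you are more explicit than the paper about the passage from $\Q_\ell$-points to $\Z_\ell$-points, carefully intersecting with the compact open $\mathcal{G}_\ell^\circ(\Z_\ell)$ rather than leaving that step implicit.
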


Let $X$ be a K3 surface over a number field.  Setting $i = 1$, the representation $\rho_\ell$ introduced in \S\ref{ss:elladic reps} is the inverse limit of the finite-level representations
\[
\rho_{\ell,n} \colon \Gal(\bar{k}/k)\to \GL(\HH^{2}_{\et}(\Xbar,\mu_{\ell^n}))
\]
Letting $\pi_n^\ell\colon \calG(\Z_\ell) \to \calG(\Z/\ell^n\Z)$ denote the projections of this inverse system, the following diagram
\[
\xymatrix{ \Gal(\bar{k}/k) \ar[r]^-{\rho_{\ell}} \ar[dr]_-{\rho_{\ell,n}} & \calG(\Z_\ell) \ar[d]^-{\pi_n^\ell} \\
   & \calG(\Z/\ell^n\Z) }
\]
commutes for all $n$. For $n' \geq n$ we denote by $\pi_{n',n}^\ell\colon \calG(\Z/\ell^{n'}\Z) \to \calG(\Z/\ell^n\Z)$ the intermediate projection of the inverse system.  In this context, the openness of $\im \rho_\ell$ in $\calG_{\ell}(\Z_\ell)$ implies the following more explicit statement.

\begin{cor}[$\ell$-adic open image theorem]
\label{prop:openimageuseful}
There exists an integer $n_0>0$ such that $\im(\rho_{\ell})=(\pi^\ell_{n_0})^{-1}(\im(\rho_{\ell,n_0}))$. In particular, $\im(\rho_{\ell,n})=(\pi_{n,n_0}^\ell)^{-1}(\im(\rho_{\ell,n_0}))$ for all $n\ge n_0$.
\qed
\end{cor}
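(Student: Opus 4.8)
The plan is to deduce Corollary~\ref{prop:openimageuseful} from the openness of $\im(\rho_\ell)$ in $\calG_\ell(\Z_\ell)$ by a standard profinite/pro-$\ell$ compactness argument, combined with the fact that the maps $\pi^\ell_n$ are continuous and the fibers of $\pi^\ell_{n',n}$ shrink in a controlled way. First I would invoke Corollary~\ref{cor:imageopen} (or rather the discussion preceding it) to record that $H:=\im(\rho_\ell)$ is an open subgroup of the profinite group $\calG:=\calG_\ell(\Z_\ell)$, where $\calG$ is endowed with its $\ell$-adic topology and the projections $\pi^\ell_n\colon\calG\to\calG(\Z/\ell^n\Z)$ are the coordinate maps of the inverse limit $\calG=\varprojlim_n\calG(\Z/\ell^n\Z)$. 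A basis of open neighborhoods of the identity in $\calG$ is given by the kernels $K_n:=\ker(\pi^\ell_n)$, so openness of $H$ means $H\supseteq K_{n_0}$ for some $n_0>0$. Since $H$ is also a subgroup, this immediately gives $H=\pi^{\ell,-1}_{n_0}(\pi^\ell_{n_0}(H))$: one inclusion is trivial, and conversely if $g\in\calG$ satisfies $\pi^\ell_{n_0}(g)=\pi^\ell_{n_0}(h)$ for some $h\in H$, then $gh^{-1}\in K_{n_0}\subseteq H$, whence $g\in H$. Noting $\pi^\ell_{n_0}(H)=\pi^\ell_{n_0}(\im(\rho_\ell))=\im(\rho_{\ell,n_0})$ by the commutativity of the triangle in the excerpt yields the first assertion.

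For the "in particular" clause, I would apply $\pi^\ell_n$ to both sides of $\im(\rho_\ell)=(\pi^\ell_{n_0})^{-1}(\im(\rho_{\ell,n_0}))$ for $n\ge n_0$. On the left we get $\pi^\ell_n(\im(\rho_\ell))=\im(\rho_{\ell,n})$ again by the commutative triangle. On the right, using the factorization $\pi^\ell_{n_0}=\pi^\ell_{n_0,n}\circ\pi^\ell_n$ of the inverse system, one has $\pi^\ell_n\big((\pi^\ell_{n_0})^{-1}(\im(\rho_{\ell,n_0}))\big)=(\pi^\ell_{n_0,n})^{-1}(\im(\rho_{\ell,n_0}))$ — here the key point is \emph{surjectivity} of $\pi^\ell_n\colon\calG(\Z_\ell)\to\calG(\Z/\ell^n\Z)$, which holds because $\calG$ is the inverse limit of the $\calG(\Z/\ell^m\Z)$ and the transition maps in this system are surjective (this is where I would want to be slightly careful: one should know the scheme-theoretic image system is surjective, e.g. because $\calG$ is smooth over $\Z_\ell$ or by appealing directly to the fact that $\calG(\Z_\ell)\twoheadrightarrow\calG(\Z/\ell^n\Z)$ for a smooth affine group scheme, via Hensel's lemma / formal smoothness). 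Given surjectivity, the equality $\pi^\ell_n((\pi^\ell_{n_0})^{-1}(S))=(\pi^\ell_{n_0,n})^{-1}(S)$ for any $S\subseteq\calG(\Z/\ell^{n_0}\Z)$ is a set-theoretic triviality.

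The main obstacle — really the only non-bookkeeping point — is justifying that the coordinate projections $\pi^\ell_n\colon\calG_\ell(\Z_\ell)\to\calG_\ell(\Z/\ell^n\Z)$ are surjective, so that taking images and preimages interacts cleanly with the inverse-limit structure; this is where smoothness of the group scheme $\calG_\ell$ over $\Z_\ell$ (or at least of a suitable smooth model, after possibly enlarging $n_0$) enters, via formal smoothness lifting points modulo $\ell^n$ to $\Z_\ell$-points. Everything else is the elementary observation that an open subgroup of a profinite group is a union of cosets of some $K_{n_0}$ and hence is the full preimage of its image at level $n_0$. I would therefore organize the writeup as: (1) recall $\im(\rho_\ell)$ open in $\calG_\ell(\Z_\ell)$; (2) pick $n_0$ with $K_{n_0}\subseteq\im(\rho_\ell)$; (3) deduce $\im(\rho_\ell)=(\pi^\ell_{n_0})^{-1}(\im(\rho_{\ell,n_0}))$ from the subgroup property; (4) push forward along $\pi^\ell_n$ using surjectivity and the factorization of transition maps to get the statement for all $n\ge n_0$.
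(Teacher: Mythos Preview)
Your proposal is correct and is precisely the argument the paper has in mind: the corollary carries only a \qed\ in the paper, being regarded as an immediate unpacking of the openness of $\im(\rho_\ell)$ in $\calG_\ell(\Z_\ell)$, and your write-up is the standard way to make that explicit. Your identification of the surjectivity of $\pi^\ell_n\colon\calG_\ell(\Z_\ell)\to\calG_\ell(\Z/\ell^n\Z)$ as the one point requiring care (via smoothness and Hensel) is well taken and is left implicit in the paper.
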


To prove Theorem~\ref{thm:main} in the case where the order of the Brauer class $\alpha$ is a composite integer $m$, we require an $m$-adic open image theorem. While it is possible to prove such a result by studying the independence of the images of the $\rho_\ell$ for the primes $\ell$ that divide $m$ \cite{Serrecomment}, the theorem is already an immediate consequence of the much deeper ad\'elic open image theorem of Cadoret and Moonen: 

\begin{theorem}[{\cite[Theorem~6.6]{CadoretMoonen}}]
The image of the Galois representation 
\[
\rho_{\hat{\Z}}\colon \Gal(\kbar/k) \to \GL(\HH^{2}_{\et}(\Xbar,\hat{\Z})),
\]
has a finite index subgroup isomorphic to an open subgroup of $\calMT(\HH^{2}(X(\C),\Z(1)))\times_\Z \hat{\Z}(\hat{\Z})$.
\end{theorem}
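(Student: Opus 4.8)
This is \cite[Theorem~6.6]{CadoretMoonen}; I indicate the shape of its proof. Write $\calA:=\calMT(\HH^{2}(X(\C),\Z(1)))$, a flat affine group scheme over $\Z$, and recall from the integral Mumford-Tate conjecture for K3 surfaces (Theorem~\ref{thm:Tankeev} together with the passage to Zariski closures discussed above) that $\mathcal{G}^\circ_\ell\isom \calA\times_\Z\Z_\ell$ for every prime $\ell$. By Theorem~\ref{thm:zarhin} (and its CM analogue) the generic fibre $\calA_\Q$ is an explicit reductive group, namely the centralizer of $E$ in a group of orthogonal similitudes; consequently, for all but finitely many $\ell$, the base change $\calA_{\Z_\ell}$ is a smooth reductive group scheme, so that $\calA(\Z_\ell)\to\calA(\Z/\ell^n\Z)$ is surjective and $\calA_{\F_\ell}$ is reductive of the expected type.

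The first step is the $\ell$-adic open image theorem, already recorded in Corollaries~\ref{cor:imageopen} and~\ref{prop:openimageuseful}: for each $\ell$ there is an integer $n_0=n_0(\ell)$ with $\im(\rho_{\ell,n})=(\pi_{n,n_0}^\ell)^{-1}(\im(\rho_{\ell,n_0}))$ for all $n\ge n_0$, equivalently a finite-index subgroup of $\im(\rho_\ell)$ is open in $\calA(\Z_\ell)$. To promote these primewise statements to an ad\'elic one, I would establish two further inputs. The first is a \emph{uniformity} across $\ell$: one can take $n_0(\ell)=0$ for all but finitely many $\ell$, so that a subgroup of bounded index in $\im(\rho_\ell)$ surjects onto $\calA(\F_\ell)$ (more precisely, onto the Nori subgroup $\calA(\F_\ell)^{+}$ generated by its elements of order $\ell$). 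The second is an \emph{independence of $\ell$} statement, forcing the image of $\rho_{\hat\Z}$ to fill up the restricted product $\prod'_\ell\calA(\Z_\ell)$ up to finite index once the primewise images are known to be large.

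For the uniformity input I would use \emph{Frobenius tori} in the sense of Serre and Pink. By the Chebotarev density theorem, the Frobenius classes attached to a positive-density set of finite places $\frakp$ of good reduction generate, for each $\ell$, a maximal torus of $\mathcal{G}^\circ_\ell$; the crucial point is that, because the characteristic polynomial of $\Frob_\frakp$ has $\Z$-coefficients independent of $\ell$, the reductions of these tori modulo $\ell$ can be controlled uniformly in $\ell$. Feeding this into the Nori--Larsen--Pink structure theory for subgroups of $\calA(\F_\ell)$, applied to the explicit reductive group $\calA_{\F_\ell}$ supplied by Zarhin's theorem, one concludes that the mod-$\ell$ image already equals $\calA(\F_\ell)^{+}$ once $\ell$ is large---the explicit description of $\calA$ being exactly what excludes exceptional ``small'' images. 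For the independence input I would invoke Goursat's lemma: over the splitting field of $E$ the simple factors of $\calA$ are of orthogonal type, so their minimal faithful representations are pairwise non-isomorphic across distinct $\ell$, ruling out diagonal subgroups of the product and yielding the restricted-product statement up to finite index. The finitely many excluded primes, together with the finitely many connected components of $\prod_\ell\mathcal{G}_\ell$ (controlled by the extension $k^{\conn}/k$), are absorbed into the ``finite index'' and ``open subgroup'' clauses.

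The hard part is the uniformity input: obtaining a bound on the index of the mod-$\ell$ image that is \emph{independent of $\ell$}. This cannot be extracted from the $\ell$-adic results alone, and genuinely requires the Larsen--Pink/Nori machinery in tandem with the Frobenius torus construction; it is precisely here that having the Mumford-Tate conjecture for K3 surfaces---and, with it, the classification of the possible groups $\calA$ afforded by Zarhin's theorem---is indispensable.
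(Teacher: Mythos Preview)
The paper does not supply its own proof of this statement: it is quoted verbatim as \cite[Theorem~6.6]{CadoretMoonen} and used as a black box. Your proposal correctly identifies this and then goes further, sketching the architecture of the Cadoret--Moonen argument. The outline you give is broadly faithful to their strategy: the $\ell$-adic open image for each $\ell$ separately (already recorded in the paper as Corollary~\ref{cor:imageopen}), a uniformity step across $\ell$ via Frobenius tori and the Larsen--Pink/Nori machinery applied to the reductive group pinned down by Zarhin's description, and a Goursat-type independence argument to assemble the primewise results into an ad\'elic one. Since the paper offers nothing to compare against here, your sketch is supplementary exposition rather than an alternative route; if anything, it is more informative than the paper's bare citation.
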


We will make use of the resulting $m$-adic open image theorem in the following way. Write $m=\ell_1^{e_1}\cdots \ell_r^{e_r}$ with $\ell_i$ distinct primes, $1\leq i \leq r$.

\begin{cor}\label{cor:adelicopenimage}
There exists an integer $m_0>0$ depending on $m$ such that $\im(\rho_{\hat{\Z}})$ contains the kernel of the reduction modulo $m^{m_0}$-map $\calG(\hat{\Z}) \to \calG(\Z/m^{m_0}\Z)$. In particular, for $n\geq m_0$, an element $(\gamma_1,...,\gamma_r)\in \calG(\Z/\ell_1^{e_1n}\Z)\times \cdots \times \calG(\Z/\ell_r^{e_rn}\Z)$ which reduces to the identity modulo $m^{m_0}$ is contained in the image of $\rho_{m,n}\colon \Gal(\kbar/k) \to \GL(\HH^2_{\et}(\Xbar, \mu_{m^n}))$.
\qed
\end{cor}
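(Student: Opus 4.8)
The plan is to deduce this directly from the ad\'elic open image theorem \cite[Theorem~6.6]{CadoretMoonen} quoted above, together with the integral Mumford--Tate conjecture for K3 surfaces, which identifies $\calG$ (up to connected components) with $\calMT(\HH^2(X(\C),\Z(1)))\times_\Z\Zhat$. First I would note that that theorem supplies a subgroup of $\im(\rho_{\Zhat})$ which is open in $\calG(\Zhat)$; since a subgroup of a topological group containing an open subgroup is a union of translates of it, hence open, $\im(\rho_{\Zhat})$ is itself open in $\calG(\Zhat)=\prod_p\calG(\Z_p)$. Projecting onto the factors at the primes dividing $m$ is an open continuous homomorphism, so the image of the induced $m$-adic representation $\rho_m\colon\Gal(\kbar/k)\to\prod_{p\mid m}\calG(\Z_p)$ is again open. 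As $\calG$ is affine of finite type over each $\Z_p$ we have $\calG(\Z_p)=\varprojlim_k\calG(\Z/p^k\Z)$, so the kernels of the reduction maps form a neighbourhood basis of the identity; hence an open subgroup of $\prod_{p\mid m}\calG(\Z_p)$ contains $\prod_{p\mid m}\ker\bigl(\calG(\Z_p)\to\calG(\Z/p^{k_p}\Z)\bigr)$ for suitable exponents $k_p$, and therefore contains $\ker\bigl(\prod_{p\mid m}\calG(\Z_p)\to\calG(\Z/m^{m_0}\Z)\bigr)$ once $m^{m_0}$ is divisible by every $p^{k_p}$. Choosing $m_0$ this large gives the first assertion.

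For the ``in particular,'' I would descend to finite level. Fix $n\ge m_0$ and an element $(\gamma_1,\dots,\gamma_r)\in\prod_i\calG(\Z/\ell_i^{e_in}\Z)=\calG(\Z/m^n\Z)$ reducing to the identity modulo $m^{m_0}$. Since $\rho_{m,n}$ is the composite of $\rho_m$ with the reduction $\prod_{p\mid m}\calG(\Z_p)\to\calG(\Z/m^n\Z)$, it suffices to lift $(\gamma_i)_i$ to an element of $\prod_{p\mid m}\calG(\Z_p)$ that lies in $\ker(\,\cdot\,\to\calG(\Z/m^{m_0}\Z))$: such an element is $\rho_m(\sigma)$ for some $\sigma\in\Gal(\kbar/k)$ by the first part, and then $\rho_{m,n}(\sigma)=(\gamma_i)_i$. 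To produce the lift I would lift each $\gamma_i$ to some $g_i\in\calG(\Z_{\ell_i})$ reducing to $\gamma_i$ modulo $\ell_i^{e_in}$; because $n\ge m_0$, each $g_i$ then reduces to the identity modulo $\ell_i^{e_im_0}$, so $(g_i)_i$ lies in the required kernel.

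The only genuine point to watch, I expect, is the existence of the lifts $g_i$, i.e.\ surjectivity of the reduction $\calG(\Z_{\ell_i})\to\calG(\Z/\ell_i^{e_in}\Z)$. For all but finitely many primes this is automatic: by Corollary~\ref{cor:monodromygpTX1} the integral Mumford--Tate group of $X$ is a centralizer inside $\SO(\TT(X)_\Q,\phi)$, hence is smooth over $\Z_p$ for all but finitely many $p$, and smoothness over the henselian ring $\Z_p$ gives surjectivity onto every finite quotient. At the remaining finitely many primes dividing $m$ the surjectivity has to be examined separately; alternatively, and more robustly, one observes that in the intended application (the proof of Theorem~\ref{thm:main}) the elements $(\gamma_i)_i$ one actually needs to realize arise as reductions of genuine $\Zhat$-points of $\calG$, which removes the surjectivity question altogether. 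Apart from this bad-prime bookkeeping, the whole argument is a formal unwinding of the openness statement, which is why it is recorded as an immediate corollary.
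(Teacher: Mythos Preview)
Your argument is correct and is exactly the unwinding the paper has in mind: the corollary carries a \qed\ and is presented as an ``immediate consequence'' of the Cadoret--Moonen ad\'elic open image theorem, with no further proof given. Your deduction---openness of the ad\'elic image, projection to the primes dividing $m$, and the neighbourhood-basis argument to locate $m_0$---is the natural way to supply the missing details.

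Your caution about surjectivity of $\calG(\Z_{\ell_i})\to\calG(\Z/\ell_i^{e_in}\Z)$ is well taken; the paper does not address it, and smoothness of the integral Mumford--Tate group scheme at the primes dividing $m$ is not guaranteed. Your observation that in the actual application the $\gamma_i$ are constructed in Lemma~\ref{lem:effectivegoodmatrices} (via Lemma~\ref{lem:jetseparation}) as reductions of honest $\Z_{\ell_i}$-points of $\calMT(\HH)$, so that no lifting is needed, is precisely the right way to close this cleanly.

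You have also, implicitly, tightened the first sentence of the corollary: read literally, it asks that $\im(\rho_{\Zhat})$ contain the full kernel of $\calG(\Zhat)\to\calG(\Z/m^{m_0}\Z)$, which would force the ad\'elic image to project onto all of $\calG(\Z_p)$ at every prime $p\nmid m$---more than mere openness provides. What you prove is the $m$-adic analogue, that $\im(\rho_m)\subset\prod_{p\mid m}\calG(\Z_p)$ contains the kernel of reduction modulo $m^{m_0}$, and this is exactly what the ``in particular'' clause and the application in \S\ref{s:proof} require.
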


\subsection{Frobenius conjugacy classes}
\label{ss:Frobcc}

Let $X$ be a smooth and proper scheme over a number field $k$, and fix a finite place $\frakp$ of good reduction for $X$. 
For any choice of inclusion $\kbar \hookrightarrow \kbar_\frakp$, the image of $\Gal(\kbar_\frakp/k_\frakp) \hookrightarrow \Gal(\kbar/k)$ is a decomposition group $D_\frakp$. Different choices for this embedding give rise to conjugate decomposition groups. The kernel of the natural, continuous surjective map 
$$\Gal(\kbar_\frakp/k_\frakp) \twoheadrightarrow \Gal(\overline{\F}_\frakp/\F_\frakp),$$
is the inertia group $I_\frakp$. The group $\Gal(\overline{\F}_\frakp/\F_\frakp)$ is topologically generated by the Frobenius endomorphism, which we call $\Frob$. Via the surjection above, we may pick a lift of this generator to $D_\frakp \cong \Gal(\kbar_\frakp/k_\frakp)$, which we call $\Frob_\frakp\in \Gal(\kbar/k)$, a lift of Frobenius to characteristic zero. By \cite[Exp.~XVI, Corollaire 2.2]{SGA4}, the $\Gal(\kbar/k)$-representation is unramified at $\frakp$, and so while $\Frob_\frakp$ is only defined up multiplication by elements in $I_\frakp$, the resulting action on $\HH^2_{\et}(\Xbar, \Z_\ell(1))$, for $\ell \neq \Char \F_\frakp$, does not depend on this choice. However, it {\it does} depend on the choice of embedding $\kbar \hookrightarrow \kbar_\frakp$, which is discussed more below.

The embedding $\kbar\hookrightarrow\kbar_\frakp$ induces an isomorphism $\HH^{2i}_{\et}(\Xbar, \Z_\ell(i)) \cong \HH^{2i}_{\et}(X_{\kbar_\frakp},\Z_\ell(i))$ \cite[Exp.~XII, Corollaire 5.4]{SGA4} for which the action of $D_\frakp$ on the left agrees via the isomorphism with the action of $\Gal(\kbar_\frakp/k_\frakp)$ on the right. There is also an isomorphism $\HH^{2i}_{\et}(X_{\kbar_\frakp}, \Z_\ell(i))\cong \HH^{2i}_{\et}(\Xbar_\frakp, \Z_\ell(i))$ \cite[Exp.~XVI, Corollaire 2.2]{SGA4} for which the action of $D_\frakp/I_\frakp \cong \Gal(k_\frakp^{\nr}/k_\frakp)$ is compatible with that of $\Gal(\overline{\F}_\frakp/\F_\frakp)$. Thus we see that via these isomorphisms, the action of $\Frob_\frakp$ on $\HH^{2i}_{\et}(\Xbar, \Z_\ell(i))$ and of $\Frob$ on $\HH^{2i}_{\et}(\Xbar_\frakp, \Z_\ell(i))$ agree. 

We are interested in properties of an element $\Frob_{\frakp}$ that may be extracted from its action on $\HH^{2i}_{\et}(\Xbar_\frakp, \Z_\ell(i))$.
If we care only about, e.g., the characteristic polynomial of $\rho_{\ell}(\Frob_{\frakp})$ then we can read this off from any element conjugate to $\rho_{\ell}(\Frob_{\frakp})$ by a linear automorphism of the cohomology defined over $\Qbar_{\ell}$.

However, we might ask for more refined data such as the {\it position} of an eigenspace of $\rho_\ell(\Frob_{\frakp})$ in $\HH^{2i}_{\et}(\Xbar_\frakp, \Z_\ell(i))$ associated with a given root of the characteristic polynomial, and wish to read that off from the eigenspaces for $\rho_\ell(\Frob_{\frakp})$. 
Understanding how much this data depends on the choice of element $\Frob_\frakp$  
requires understanding $\rho_{\ell}(\Frob_{\frakp})$ up to finer equivalence relations, e.g., conjugation by linear automorphisms 
of $\HH^{2i}_{\et}(\Xbar_\frakp, \Z_\ell(i))$, by the image of $\Gal(\kbar/k)$ in this linear group, or by a suitable congruence
subgroup for the integral Mumford-Tate group contained in this image. The open image theorem (Cor.~\ref{prop:openimageuseful}) permits
this reduction in our situation. There is an extensive literature
on classifying elements of matrix groups over various rings up to conjugacy by prescribed subgroups e.g.~\cite{GruSeg,AppOni}.

By considering the Galois action on $\HH^2_{\et}(\Xbar, \mu_{\ell^n})$ with $n$ large, the representation \tc{$\rho_{\ell,n}$} factors through $\Gal(K/k)$ for $K$ some finite Galois extension of $k$. We will express our desired properties so that we may extract the needed information from these finite representations.

We make use of this circle of ideas to identify eigenspaces of $\Frob_\frakp$ with root-of-unity eigenvalues. By the Tate Conjecture for K3 surfaces over finite fields, which has been proved by the combined works of \cite{CharlesTateConj}, \cite{LMS}, \cite{MadapusiPera}, and \cite{KMPTateConj}, and the Integral Tate Conjecture \cite[Theorem~5.2]{Tate}, these eigenspaces will correspond to subspaces in $\HH^2_{\et}(\Xbar, \Z_\ell(1))$ which become algebraic upon reduction modulo $\frakp$.

\section{Preliminaries}
\label{s:prelims}

In what follows, we call \defi{admissible roots of unity} of a free module $T$ of finite rank those roots of unity of order less than or equal to $\Phi(\rk(T))$, where $\Phi$ is Euler's totient function.

We let $X$ be a K3 surface over a number field $k$.  Let $\HH = \HH^2(X(\C),\Z(1))$, which is a $\Z$-Hodge structure of weight $0$. Its corresponding integral Mumford-Tate group $\calMT(\HH) \subset \GL(\HH)$ is the Zariski closure in $\GL(\HH)$ of the Mumford-Tate group $\MT(\HH_\Q) \subset \GL(\HH_\Q)$, as in Definition~\ref{defn:MTgps}.
Since $\TT(X)_{\Q}(1)$ and $\NS(X_\C)_\Q(1)$ are orthogonal direct summands of $\HH_{\Q}$ and Mumford-Tate groups act
trivially on weight-zero Hodge classes, we have an isomorphism
$$\MT(\HH_{\Q}) \simeq \MT(\TT(X)_{\Q}(1)).$$
Over $\Z$ we lack direct sum decompositions but still obtain a homomorphism of group schemes
\begin{equation}
\label{eq:MThom}
\calMT(\HH) \rightarrow \calMT(\TT(X)(1)).
\end{equation}

\begin{proposition}
\label{lem:goodmatrices}
Assume that the Hodge endomorphism algebra $E$ of $X$ is totally real, and that $\dim_E \TT(X)_\Q$ is odd.
Let $U \subseteq \calMT(\HH)$ be the set of elements $\psi$ such that:
\begin{enumerate}[leftmargin=*]
\item the action of $\psi$ on $\HH^2(X(\C), \Q(1))/\left(\NS(X_\C)\otimes \Q(1)\right)$ has a $(+1)$-eigenspace of dimension $1$ as a vector space over $E$;
\item the only admissible root of unity that is an eigenvalue for $\psi$ is $1$.
\end{enumerate}
Then $U$ is a Zariski dense open subset of $\calMT(\HH)$. 
\end{proposition}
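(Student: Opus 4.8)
The plan is to show that the complement of $U$ is contained in a proper Zariski-closed subset of $\calMT(\HH)$, by exhibiting for each of the two conditions a nonzero "defining" polynomial (or showing the failure locus is cut out by such). Since $\calMT(\HH)$ is smooth and connected (being the Zariski closure of the connected reductive group $\MT(\HH_\Q)$, by \cite[Prop.~2]{Schnell}), it suffices to show that each condition holds on a nonempty Zariski-open subset; the intersection of two nonempty opens in an irreducible scheme is again nonempty and open, hence dense.

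First I would analyze condition (1). By Corollary~\ref{cor:monodromygpTX1} and Theorem~\ref{thm:zarhin}, $\MT(\TT(X)_\Q(1))$ is the centralizer of $E$ in $\SO(\TT(X)_\Q,\phi_\Q)$, which via Zarhin's description is $\Res_{E/\Q}\SO_E(\TT(X)_\Q,\Phi)$ where $\TT(X)_\Q$ is an $E$-vector space of odd dimension $N:=\dim_E\TT(X)_\Q$. So the relevant group, after base change to a suitable field, becomes (a form of) $\SO_N$ over $E$, acting on $N$-dimensional space. For an element of $\SO_N$ with $N$ odd, the eigenvalue $1$ always occurs (the determinant is $1$ and eigenvalues pair up as $\lambda,\lambda^{-1}$, leaving an odd one out, which must be $\pm1$, and a parity/count argument forces a $+1$); generically the $+1$-eigenspace is exactly $1$-dimensional over $E$. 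Concretely, I would write the characteristic polynomial of $\psi$ acting on the $N$-dimensional $E$-space and observe that the condition "$(+1)$-eigenspace has $E$-dimension $\geq 2$" is the vanishing of the coefficient of $(t-1)$ in $\det(tI-\psi)$ (equivalently $\det(t I - \psi)/(t-1)$ does not vanish at $t=1$ generically), a nontrivial polynomial condition on $\psi$ since one can exhibit a single element of $\SO_N$ (e.g. a generic block-diagonal rotation) with a one-dimensional $+1$-eigenspace. This shows the locus where (1) fails is Zariski closed and proper. One must be careful that the condition is phrased on the quotient $\HH^2(X(\C),\Q(1))/(\NS(X_\C)\otimes\Q(1)) \cong \TT(X)_\Q(1)$ and that the map \eqref{eq:MThom} intertwines the actions — this is exactly the content of the isomorphism $\MT(\HH_\Q)\simeq\MT(\TT(X)_\Q(1))$ recorded just before the Proposition.

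Next, condition (2): for each admissible root of unity $\zeta\neq 1$ (there are finitely many, of order at most $\Phi(\rk\HH)=\Phi(22)=10$), the condition "$\zeta$ is an eigenvalue of $\psi$" is the vanishing of $\det(\psi-\zeta I)$, a regular function on $\calMT(\HH)$ (a priori with values in a cyclotomic field, but taking a norm down to $\Q$ gives an honest $\Q$-valued regular function). I need to check each such function is not identically zero on $\calMT(\HH)$, i.e. that there exists $\psi\in\calMT(\HH)$ not having $\zeta$ as an eigenvalue. This is clear: the identity element has only the eigenvalue $1$ (with the appropriate multiplicity coming from the trivial action on $\NS$), so in particular no $\zeta\neq1$ occurs. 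Hence each such vanishing locus is proper Zariski-closed, and the finite union over admissible $\zeta\neq1$ is still proper Zariski-closed. Intersecting the complements of these loci with the open locus from (1) yields the desired nonempty (hence, by irreducibility, dense) Zariski-open $U$.

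The main obstacle is the verification in condition (1) that the generic element of the centralizer-of-$E$ group has a $+1$-eigenspace of $E$-dimension exactly $1$ — this is where the hypotheses "$E$ totally real" and "$\dim_E\TT(X)_\Q$ odd" are essential, and it requires unwinding Zarhin's structure theorem to see the group as (a form of) $\SO$ of an odd-dimensional quadratic $E$-space and then running the eigenvalue-counting argument for special orthogonal transformations in odd dimension. Everything else — that $\calMT(\HH)$ is irreducible, that finitely many proper closed subsets have proper closed union, that determinant-type conditions are closed — is formal. I should also take care that "admissible root of unity" is defined relative to $T$ and here the ambient module is $\HH$ of rank $22$, so admissible means order $\leq\Phi(22)=10$; but the argument is insensitive to the exact bound as long as it is finite.
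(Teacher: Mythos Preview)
Your argument is essentially the paper's: both identify $\MT(\TT(X)_\Q(1))$ with $\Res_{E/\Q}\SO_E(\TT(X)_\Q,\Phi)$ via Zarhin, use the odd-dimensional $\SO$ fact to see that $+1$ is always an eigenvalue on $\TT(X)_\Q$, and then show the two defining conditions are open with proper closed complements. One correction: the locus ``$(+1)$-eigenspace has $E$-dimension $\ge 2$'' is \emph{not} cut out by the vanishing of the linear term of $\det(tI-\psi)$ at $t=1$---that detects \emph{algebraic} multiplicity $\ge 2$, which is strictly stronger (a unipotent element of $\SO_3$ for an isotropic form already gives a counterexample); the correct closed condition is $\rank_E(\psi-I)\le N-2$, i.e.\ the vanishing of all $(N-1)\times(N-1)$ minors of $\psi-I$, and with this fix your openness argument goes through. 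Your nonemptiness step---exhibiting separate witnesses for (1) and (2) and combining them via irreducibility of $\calMT(\HH)$---is a valid and self-contained alternative to the paper's appeal to \cite{Charles}*{Proposition~15(2)} for a single $\Q$-point of $U$.
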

 
We remark that this proposition is a slight generalization of \cite[Proposition 15(2)]{Charles}.

\begin{proof} 
Let $U_{\circ} \supset U$ denote the locus obtained by weakening the first condition, so that $(+1)$ has algebraic multiplicity $\le 1$ for $\psi$ over $E$.  
We show that $U_{\circ}$ is Zariski open. Prescribing an eigenvalue, or imposing a lower bound on its multiplicity as a root of the characteristic polynomial, cuts out a Zariski-closed subset, so avoiding certain eigenvalues or imposing upper bounds on their algebraic multiplicity is a Zariski-open condition.

Next, we show that $U_{\circ}=U$, i.e., the action of any $\psi \in U_{\circ}$ on $\TT(X)_\Q(1)$ has a nonzero $(+1)$-eigenspace over $E$. Let $\TT = \TT(X)$. There is a unique $E$-bilinear form $\Phi\colon \TT_\Q \times \TT_\Q \to E$, compatible with the pairing $\phi\colon \TT_\Q \times \TT_\Q \to \Q$, in the sense that $\Tr_{E/\Q}(\Phi) = \phi$~\cite{Zarhin}*{\S2.1}.  Hence, the centralizer of $E$ in $\SO(\TT_\Q, \phi)$ coincides with the Weil restriction of scalars $\Res_{E/\Q}(\SO_E(\TT_\Q,\Phi))$, as subgroups of $\GL(\TT_\Q)$.    On the other hand, by Corollary~\ref{cor:monodromygpTX1}, $\MT(\HH_\Q)\cong \MT(\TT_\Q(1))$ is the centralizer of $E$ in $\SO(\TT_\Q, \phi)$.  Since $\TT_\Q$ has odd dimension over $E$, every element of $\SO_E(\TT_\Q,\Phi)$ has 1 as an eigenvalue.  We conclude that $U_\circ = U$, as desired.

To see that $U$ is nonempty, the argument given in \cite{Charles}*{Proposition~15(2)} over $\Q_\ell$ also works over $\Q$ to produce a $\Q$-point of $U$. Thus $U$ is Zariski dense, as desired.
\end{proof}

For a prime number $\ell$ and positive integer $e$, let $\alpha$ be a Galois-invariant class in $\HH_{\et}^2(\Xbar, \mu_{\ell^e})$ which is not contained in the image of $\NS(\Xbar)\otimes \Z/\ell^e\Z$. In \S \ref{s:proof}, this will be a choice of lift of a class in $\Br(X)[\ell^e]$.

We fix, for each $n \geq 1$, compatible isomorphisms between the $\HH^2_{\et}(\Xbar, \mu_{\ell^{en}})$ and a 
system of standard free $(\Z/\ell^{en}\Z)$-modules $P_{\ell^e,n}$. The image of each Galois representation
\[
\rho_{\ell^e,n} \colon \Gal(\kbar/k) \to \Aut\left(\HH^2_{\et}(\Xbar,\mu_{\ell^{en}})\right),
\]
then lies in the finite group $\Aut_{\Z/\ell^{en}\Z}(P_{\ell^e,n})$. Let $\Gamma_{\ell^e,n} \subseteq \Aut_{\Z/\ell^{en}\Z}(P_{\ell^e,n})$ denote the image under this identification.  We are interested in $\Gamma_{\ell^e,n}$-conjugacy classes of elements of $\Gamma_{\ell^e,n}$ that may be realized as images of Frobenius conjugacy classes $[\Frob_\frakp]$ for $\Xbar$, with $\frakp$ a finite place of $k$.  

Using the universal coefficient theorem and the comparison results for analytic and \'etale cohomology
with torsion coefficients, we have reduction morphisms
\[
\HH^2(X(\C), \Z(1)) \twoheadrightarrow \HH^2(X(\C),(\Z/\ell^{en}\Z)(1)) \simeq \HH^2_{\et}(\Xbar,\mu_{\ell^{en}}) \simeq P_{\ell^e,n},
\]
whence a map
\[
\calMT(\HH) \subset \GL(\HH) \rightarrow \Aut\left(\HH^2_{\et}(\Xbar,\mu_{\ell^{en}})\right).
\]
Let $U_{\ell^e,n} \subset \Gamma_{\ell^e,n}$ denote the elements of $\Gamma_{\ell^e,n}$ whose preimages under this map all 
lie in the set $U$ constructed in Proposition~\ref{lem:goodmatrices}, i.e., the only eigenvalue that is an admissible root of unity is $1$, with minimal multiplicity.  
This is nonempty for $n\gg 0$ because $U$ is $\ell$-adically open and elements of $U_{\ell^e,n}$ correspond to $\ell$-adic open balls in $U$.
Finally, let $A_{\ell^e,n}$ be the set of elements in $P_{\ell^e,n}$ which $\bmod \ell^e$ are congruent to $\alpha$.

\begin{lemma}
\label{lem:effectivegoodmatrices}
There is an $n > 0$ and a class $\gamma \in U_{\ell^e,n}$ such that any $\tilde\gamma \in U$ lying over $\gamma$ has $(+1)$-eigenspace whose reduction $\bmod\, \ell^e$ contains $\alpha$.
\end{lemma}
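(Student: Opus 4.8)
The plan is to combine the openness of $U$ inside $\calMT(\HH)$ (Proposition~\ref{lem:goodmatrices}) with the surjectivity of the reduction maps on $\calMT$ and the $\ell$-adic open image theorem (Corollary~\ref{prop:openimageuseful}), using the prescribed class $\alpha$ to pin down which eigenvector we want. First I would observe that the set $U \subseteq \calMT(\HH)$ is $\ell$-adically open and nonempty (by Proposition~\ref{lem:goodmatrices} it is Zariski dense open, hence contains a $\Q$-point and therefore a nonempty $\ell$-adically open subset of $\calMT(\HH)(\Z_\ell)$), so for $n$ large enough its image $U_{\ell^e,n}\subseteq \Gamma_{\ell^e,n}$ is nonempty; this is exactly the remark made just before the lemma. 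The point of the lemma is to upgrade ``nonempty'' to ``nonempty with the extra property that the $(+1)$-eigenspace reduces, mod $\ell^e$, to something containing $\alpha$.''

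The key steps, in order, are as follows. (1) Work inside $\calMT(\HH)(\Z_\ell)$ and fix one element $\psi_0 \in U$, so $\psi_0$ has a $1$-dimensional-over-$E$ $(+1)$-eigenspace $L_0 \subseteq \TT(X)_\Q(1) \otimes \Q_\ell$ and no other admissible root of unity as an eigenvalue. (2) The group $\calMT(\HH)(\Z_\ell)$ acts on the set of such ``good'' eigenlines by conjugation: if $g \in \calMT(\HH)(\Z_\ell)$ then $g\psi_0 g^{-1} \in U$ and its eigenline is $gL_0$. The orbit of $L_0$ under this action sweeps out a $p$-adically open subset of the relevant (partial) flag variety, because the stabilizer is a proper subgroup and $\calMT$ acts with open orbits on the variety of admissible eigenlines — concretely, the $E$-lines $L$ with $\Phi_E(L,L) \neq 0$ form a single open $\SO_E(\TT_\Q,\Phi)$-orbit over $\Q_\ell$, and $\calMT$ surjects onto (an open subgroup of) this group by Theorem~\ref{thm:zarhin}/Corollary~\ref{cor:monodromygpTX1}. (3) Now I reduce mod $\ell^e$: the class $\alpha \in \HH^2_{\et}(\Xbar,\mu_{\ell^e})$, not coming from $\NS(\Xbar)\otimes\Z/\ell^e\Z$, determines a line (or rank-$1$ $\Z/\ell^e$-submodule) $\bar L_\alpha$ in $P_{\ell^e,1}$ which is \emph{isotropic-compatible} in the right sense; lifting $\alpha$ $\ell$-adically gives a primitive vector $v_\alpha \in \TT(X)_\Q(1)\otimes\Z_\ell$ (a lift of $\alpha$ exists since $\alpha \notin \NS \otimes \Z/\ell^e$, so $v_\alpha$ is a genuinely transcendental direction), and the $E$-line $E\cdot v_\alpha$ has $\Phi_E(v_\alpha, v_\alpha)$ an $\ell$-adic unit or at worst of controlled valuation; shrinking $\alpha$'s level if needed, $E v_\alpha$ is an admissible eigenline. (4) By step (2), the orbit of $L_0$ is $\ell$-adically open, so there exists $g \in \calMT(\HH)(\Z_\ell)$ with $g L_0$ arbitrarily $\ell$-adically close to $Ev_\alpha$; in particular, for a suitable $g$ and suitably large $n$, $g L_0$ agrees with $E v_\alpha$ modulo $\ell^{e}$ (indeed modulo $\ell^{en}$), so $\psi := g\psi_0 g^{-1} \in U$ has its $(+1)$-eigenspace reducing mod $\ell^e$ to a submodule containing $\alpha$. (5) Finally, set $\gamma$ to be the image of $\psi$ in $\Gamma_{\ell^e,n}$; by construction $\gamma \in U_{\ell^e,n}$, and \emph{every} lift $\tilde\gamma \in U$ of $\gamma$ lies in the same $\ell$-adic ball as $\psi$, hence has the same reduction mod $\ell^e$ of its $(+1)$-eigenspace — here I use Corollary~\ref{prop:openimageuseful} / the fact that the fibers of $\calMT(\HH)(\Z_\ell) \to \Gamma_{\ell^e,n}$ are cosets of the level-$\ell^{en}$ congruence subgroup, which for $n \gg 0$ acts trivially on the mod-$\ell^e$ reduction of eigenspaces of elements of $U$ (eigenspaces vary continuously and a deep enough congruence subgroup moves $\psi$ by less than $\ell^e$). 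This gives the required $n$ and $\gamma$.

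The main obstacle I expect is step (2)–(4): making precise that the conjugation action of $\calMT(\HH)(\Z_\ell)$ on admissible $(+1)$-eigenlines has $\ell$-adically open orbits, and that one can steer such an orbit to hit the specific reduction dictated by $\alpha$. This requires (a) identifying, via Theorem~\ref{thm:zarhin}, the relevant orbit of eigenlines with an open subset of a homogeneous space for $\Res_{E/\Q}\SO_E(\TT_\Q,\Phi)$, (b) checking that the line spanned by a lift of $\alpha$ lies in this open subset — i.e.\ that $\alpha$, being transcendental mod $\ell^e$, lifts to a non-isotropic-in-the-$E$-form direction, which is where the hypothesis $\alpha \notin \im(\NS(\Xbar)\otimes\Z/\ell^e)$ is genuinely used — and (c) the soft but slightly fiddly continuity/compactness argument that eigenspaces of matrices in the open set $U$ depend $\ell$-adically continuously on the matrix, so that a deep enough congruence neighborhood of $\psi$ does not change the mod-$\ell^e$ reduction of the $(+1)$-eigenspace. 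Everything else is bookkeeping with the inverse system $P_{\ell^e,n}$ and the definitions of $U_{\ell^e,n}$ and $A_{\ell^e,n}$.
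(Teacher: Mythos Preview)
Your overall strategy---construct some $\psi \in U(\Z_\ell)$ whose $(+1)$-eigenspace hits $\alpha$ modulo $\ell^e$, then reduce and take $n$ large---is sound, and step~(5) is essentially how the paper finishes as well. But there are two genuine gaps earlier. First, the claim in step~(2) that the $E$-lines $L$ with $\Phi(L,L)\neq 0$ form a \emph{single} $\SO_E(\TT(X)_\Q,\Phi)$-orbit over $\Q_\ell$ is false: by Witt's theorem the orbits of non-isotropic lines are indexed by the square class of $\Phi(v,v)$ in $(E\otimes\Q_\ell)^\times/((E\otimes\Q_\ell)^\times)^2$, of which there are several. Each orbit is $\ell$-adically open, but they are pairwise disjoint, so in step~(4) there is no reason the orbit of $L_0$ comes close to $Ev_\alpha$; your steering argument breaks right there, and the fix you gesture at in step~(3) (``controlled valuation'', ``shrinking $\alpha$'s level'') does not let you change the square class of $\Phi(v_\alpha,v_\alpha)$ once a lift with unit norm is fixed. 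Second, and independently, you never verify that the reduction $\gamma$ of your $\psi$ lands in $\Gamma_{\ell^e,n}$, which by definition is the \emph{Galois} image---in general a proper subgroup of $\calMT(\HH)(\Z/\ell^{en}\Z)$. Corollary~\ref{prop:openimageuseful} only guarantees that $\Gamma_{\ell^e,n}$ contains the elements congruent to the identity modulo $\ell^{en_0}$, and your $\psi = g\psi_0 g^{-1}$ has no reason to satisfy any such congruence.

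The paper handles both issues at once by a different mechanism. Rather than conjugating a fixed $\psi_0$, it applies a jet-separation lemma (Lemma~\ref{lem:jetseparation}) to $G=\calMT(\HH)_{\Z_\ell}$, the identity section, and $Z=U^c$, producing elements of $U(\Z_\ell)$ that are congruent to the identity modulo $\ell^{en_0}$ and hence automatically reduce into $\Gamma_{\ell^e,n}$. It then observes that this congruence condition does not constrain which non-degenerate $E$-line can appear as the $(+1)$-eigenspace---for any such $L$ one may take $\id_L \oplus h$ with $h\in\SO_E(L^\perp)(\Z_\ell)$ arbitrarily close to the identity yet without eigenvalue $1$---so one chooses the eigenspace to contain a lift of $\alpha$ directly, with no transitivity argument needed.
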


\begin{proof}
By Corollary \ref{prop:openimageuseful}, there exists an $n_0>0$ such that 
$\im(\rho_{\ell^e,n})=(\pi^{\ell^e}_{n,n_0})^{-1}\left(\im(\rho_{\ell^e,n_0})\right)$
for all $n\ge n_0$, and in particular, the image $\im(\rho_{\ell^e,n})=\Gamma_{\ell^e,n}$ contains all elements of $\calMT(\HH)(\Z/{\ell^{en}\Z})$ that are congruent to the identity $\bmod\,\ell^{en_0}$.

We claim that for some $n\geq n_0$ there is an element $\gamma \in \calMT(\HH)(\Z/\ell^{en}\Z)$ equivalent to the identity $\bmod\, \ell^{en_0}$, and an $\alpha' \in A_{\ell^e,n}$, such that $\gamma \in U_{\ell^e,n}$ and $\gamma(\alpha')=\alpha'$. To see this, we apply Lemma \ref{lem:jetseparation} below to $G=\calMT(\HH)_{\Z_\ell}$ over $R = \Z_\ell$, with $e(\Spec(\Z_\ell))=\{\Id\}$, $Z=U^c$, and $a=en_0$. This gives an integer $b=en>en_0$ such that 
\[S=\{g \in U(\Z/\ell^{en}\Z) : g \bmod \ell^{en_0}=\Id\}\neq \emptyset.\]
Note that $S\subset \Gamma_{\ell^e,n}$. 

By Proposition \ref{lem:goodmatrices}, elements in $U$ whose reductions $\bmod\, \ell^{en}$ are in $S$ have minimal $(+1)$-eigenspace, and the congruence condition $\bmod\, \ell^{en_0}$ does not restrict which subspaces of $\HH$ can appear as the $(+1)$-eigenspace. Thus, there must exist an element $\tilde\gamma\in U$ whose $\bmod\, \ell^{en}$ reduction $\gamma$ is in $S$ and the $(+1)$-eigenspace of $\tilde\gamma$ contains a vector which reduces $\bmod\, \ell^{e}$ to $\alpha$. The preimage  of $\gamma$ in $\calMT(\HH)_{\Z_\ell}$ is an $\ell$-adic open ball with center in $U$, and hence, after making $n$ larger and replacing $\gamma$ if necessary to ensure the preimage of $\gamma$ is contained in $U$, every element in $U$ lying over $\gamma$ has $(+1)$-eigenspace whose reduction $\bmod\, \ell^e$ contains $\alpha$. 
\end{proof}

\begin{lemma}
\label{lem:jetseparation}
Let $(R,\frakm)$ be a complete discrete valuation ring, $G$ a connected scheme that is smooth and separated over $R$, and $e\colon \Spec(R) \rightarrow G$ a section. Suppose we are given a closed subscheme $e(\Spec(R)) \subset Z \subsetneq G$. For each positive integer $a$, there exists an integer $b > a$ and a section $\sigma\colon \Spec(R) \rightarrow G$ such that
\begin{enumerate}
\item $\sigma \equiv e \mod \frakm^a$, and 
\item $\sigma(\Spec(R/\frakm^b)) \not \subset Z.$
\end{enumerate}
Writing $K$ for the fraction field of $R$ and $U=Z^c$, we may interpret $\sigma$ as a point of $U(K)$.  
\end{lemma}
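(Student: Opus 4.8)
The plan is to realize the section $\sigma$ by perturbing the given section $e$ along the complete local ring of $G$ at the closed point $e(\Spec R/\frakm)$, using the formal smoothness of $G$ over $R$ to lift jets. First I would reduce to a local statement: since $G$ is smooth over $R$ and $e$ is a section, the point $x_0 = e(\Spec R/\frakm)$ is a smooth point of the closed fiber, and the completed local ring $\widehat{\calO}_{G,x_0}$ is isomorphic, as an $R$-algebra, to a power series ring $R[[t_1,\dots,t_d]]$ where $d = \dim G - \dim R$ is the relative dimension, with the section $e$ corresponding to the augmentation $t_i \mapsto 0$ (here I use that a smooth morphism is, étale-locally on the source, a relative affine space, and then complete). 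The key point is that $Z \subsetneq G$ is a \emph{proper} closed subscheme containing the image of $e$, so the ideal $I_Z \subset \widehat{\calO}_{G,x_0}$ cutting out $Z$ is contained in the maximal ideal but is not the zero ideal after completion along $e$ — more precisely, because $Z \neq G$ and $G$ is connected (hence irreducible on the smooth locus through $x_0$, or at least $Z$ does not contain the component through $x_0$), the ideal $I_Z$ is nonzero in $R[[t_1,\dots,t_d]]$.

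Next I would produce the obstruction to having $\sigma$ factor through $Z$. Pick a nonzero element $f \in I_Z \subset R[[t_1,\dots,t_d]]$. Since $R[[t_1,\dots,t_d]]$ is an integral domain (as $R$ is a DVR, hence a domain) and $f \neq 0$, I can choose a section $\sigma$ corresponding to a homomorphism $R[[t_1,\dots,t_d]] \to R$ sending $t_i$ to elements $c_i \in \frakm^a$ for which $f(c_1,\dots,c_d) \neq 0$: concretely, expand $f = \sum_k f_k$ into homogeneous pieces, let $k_0$ be the smallest degree with $f_{k_0} \neq 0$ mod $\frakm$ (or just $f_{k_0}\neq 0$ and track valuations), and choose the $c_i$ generic of valuation exactly $a$ so that the degree-$k_0$ term dominates $\frakm$-adically — this uses that a nonzero polynomial over the infinite residue field... but wait, the residue field may be finite, so instead I would use the standard trick: since $f \neq 0$ in the domain $R[[t]]$, there is \emph{some} $R$-point of the formal disc not killing $f$ (e.g. a generic point of a curve, or by Krull intersection $\bigcap_b \frakm^b R[[t]] \cdot (\text{stuff}) $), and by continuity one can take it arbitrarily close $\frakm$-adically to the origin, hence $\equiv e \bmod \frakm^a$. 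Having fixed such $c_i$, let $b > a$ be large enough that $v(f(c_1,\dots,c_d)) < b - a$ or simply $f(c) \not\equiv 0 \bmod \frakm^b$; then $\sigma(\Spec(R/\frakm^b))$ does not satisfy the equation $f = 0$, so it is not contained in $Z$. Finally, composing the formal section $\Spec R \to \Spf \widehat{\calO}_{G,x_0} \to G$ with the natural map gives the desired $\sigma \colon \Spec R \to G$ (this is legitimate because $R$ is complete, so a compatible system of maps $\Spec(R/\frakm^n) \to G$ into the locally-of-finite-type $R$-scheme $G$ algebraizes to a map $\Spec R \to G$, factoring through an affine open), and since $\sigma$ has image meeting the generic fiber it defines a point of $U(K) = (G \setminus Z)(K)$.

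The main obstacle I anticipate is handling the \emph{finite residue field} issue cleanly: the naive "generic point avoids a hypersurface" argument fails over a finite field, so the crux is the observation that a nonzero power series $f$ over the domain $R[[t_1,\dots,t_d]]$ cannot vanish on \emph{all} $R$-points of the formal disc that are congruent to $0$ modulo $\frakm^a$ — equivalently, that $\bigcap_{c} (f(c))$ over such $c$ is zero — which follows from the fact that $R[[t]]$ injects into the product of its $\frakm$-adic completions along these points, or more elementarily by a Weierstrass-preparation / lowest-order-term argument after a generic linear change of the $t_i$ to make $f$ regular in one variable. Once that is in place, choosing $b$ is immediate by the non-vanishing of $f(c) \in R$ and the $\frakm$-adic topology. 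A secondary (routine) point is justifying $\widehat{\calO}_{G,x_0} \cong R[[t_1,\dots,t_d]]$ compatibly with $e$: this is smoothness plus completeness of $R$, standard, but I would cite it rather than reprove it.
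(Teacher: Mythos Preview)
Your approach is correct and follows the same overall strategy as the paper: pass to local coordinates near $e$, pick a nonzero function $f$ vanishing on $Z$, find an $R$-point congruent to $e$ modulo $\frakm^a$ at which $f$ does not vanish, and then take $b$ larger than the valuation of $f$ at that point. The paper implements the first step differently: rather than completing at $x_0$ and working in $R[[t_1,\dots,t_d]]$, it projects an affine neighborhood of $e(\Spec R)$ finitely onto $\Spec R[x_1,\dots,x_d]$ via a map that is \'etale along $e$, and then uses Hensel's lemma (with completeness of $R$) to lift sections back. The payoff is that the pushforward $Z'$ is a proper closed subscheme of affine space, so one may take $f$ to be a \emph{polynomial}; the nonvanishing step then reduces to the Zariski density of $R^d$ in $K^d$, which holds simply because $R$ is an infinite integral domain, irrespective of the size of the residue field. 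This entirely sidesteps the Weierstrass-preparation argument you anticipate needing for power series. Your formal-completion route works as well (and your identification of the crux is accurate), but the paper's \'etale-local polynomial reduction is precisely what makes your ``finite residue field'' worry disappear.
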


\begin{proof}
We find a convenient \'etale local model of our family. There exists an affine open neighborhood $G'$ of $e(\Spec(R))$. Projecting away from a suitable point, we obtain a finite morphism 
\[\pi\colon G' \rightarrow \Spec(R[x_1,\ldots,x_d])\]
that is \'etale along $e$ with $\pi(e)=0$. Since this is finite, the image $Z':=\pi( Z)$ is also a closed proper subscheme of $G'$. 

Hensel's Lemma, or the universal lifting property of \'etale morphisms, allow us to lift $R$-valued sections near $e$ from $G'$ to $G$; we have used here the completeness assumption. Thus we have reduced to the special case $0 \in Z' \subset G'$.  

Let $t$ denote a uniformizer of $R$. Choose a polynomial $f(x_1,\ldots,x_d)\neq 0$ that vanishes along $Z'$. We may assume that the coefficients of $f$ are in $R$ but not all divisible by $t$.  

Consider $x_i = t^a u_i$, where $u_i$ is an element of $R$, and the resulting polynomial 
\[g(u_1,\ldots,u_d) = f(t^a u_1,....,t^a u_d) \in R[u_1,\ldots,u_d]\]
which remains nonzero. (On $K$ we're just taking a linear change of coordinates.)

Now $R^d \subset K^d$ is Zariski dense, so we can produce $r_1,\ldots,r_d \in R$ such that $g(r_1,\ldots,r_d)\neq 0$. Let $t^c$ denote the largest power of $t$ dividing $g(r_1,\ldots,r_d)$ and set $b=max(a+1,c+1)$. The section $\sigma' = (t^a r_1,\ldots,t^a r_d)$ is not contained in $Z'$, even after reducing by $t^b$.  We lift $\sigma'$ via $\pi$ to obtain the desired $\sigma$.  
\end{proof}

\section{Proof of Theorem~\ref{thm:main}}
\label{s:proof}

In this section we use the notation of Theorem~\ref{thm:main}. Write $\mathcal{T}$ for a finite set of places of $k$ containing all the archimedean places, and let $\calO_{k,\mathcal{T}}$ denote the corresponding ring of $\mathcal{T}$-integers.  Fix a smooth proper model $\mathcal{X}\to \Spec \calO_{k,\mathcal{T}}$ of $X$ for a suitable $\calT$. When considering reductions $X_\frakp :=\mathcal{X}\times_{\calO_{k,\mathcal{T}}} \F_\frakp$ of $X$ modulo a prime $\frakp$, we tacitly assume throughout that: (1) $\frakp\notin \calT$, and (2) the element $\alpha \in \Br(X)$ is unramified for $\frakp \not\in \calT$. There is an isomorphism $\HH^2_{\et}(X,\mu_m) \xrightarrow{\sim} \HH^2_{\et}(X_\frakp, \mu_m)$, depending on the choice of model $\calX$, which determines the following commutative diagram of short exact sequences coming from the Kummer sequence: 
\[\xymatrix{ 0 \ar[r] & \NS(X)\otimes \Z/m\Z \ar[r] \ar@{^{(}->}[d] & \HH^2_{\et}(X,\mu_m) \ar[r] \ar[d]^{\rotatebox{90}{$\sim$}} & \Br(X)[m] \ar[r] \ar[d] & 0 \\
0 \ar[r] & \NS(X_\frakp) \otimes \Z/m\Z \ar[r] & \HH^2_{\et}(X_\frakp,\mu_m) \ar[r] & \Br(X_\frakp)[m] \ar[r] & 0 .}\]
Let $\alpha_\frakp$ be the image of $\alpha$ under the map $\Br(X)[m] \to \Br(X_\frakp)[m]$, or equivalently, the $\F_\frakp$-specialization of a lift of $\alpha$ to a class in $\Br(\calX)$. Note that the set
$$\calS(X,\alpha):=\{\frakp: \alpha_{\frakp}=0\in \Br(X_{\frakp})\}$$
depends on the choice of model $\calX$. A different choice of model will produce a set that differs from this one at finitely many places. We will show that for a choice of model $\mathcal{X}$, $\calS(X,\alpha)$ contains a set of positive natural density, in which case the result holds for any choice of smooth proper model.  

Suppose $\alpha \in \Br(X)$ has order $m$, and consider the image of $\alpha \in \Br(X)[m]$ under the natural map $\Br(X) \to \Br(\Xbar)$, which we will continue to call $\alpha$. Note that in $\Br(\Xbar)$, $\alpha$ has order $m'$ for some $m'$ dividing\footnote{It is possible to have $1<m'<m$; see, e.g., \cite{GvirtzSkorobogatov}.} $m$. If $m'=1$, then $\alpha$ is algebraic.  We begin the proof of Theorem~\ref{thm:main} by handling this case.

\begin{lemma}
\label{lem:alpha algebraic}
If the class $\alpha  \in \Br(X)[m]$ is algebraic, then there is a set $\calS$ of places of $k$ of positive natural density such that $\alpha_\frakp =0$ in $\Br(X_\frakp)$ for all $\frakp \in \calS$. 
\end{lemma}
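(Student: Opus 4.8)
The plan is to exploit the fact that an algebraic class is killed by a finite base extension, and then to produce $\calS$ by applying the Chebotarev density theorem to that extension. Since \'etale cohomology commutes with the filtered inverse limit $\Xbar=\varprojlim_{L} X_L$ over the finite subextensions $L$ of $\kbar/k$, one has $\Br(\Xbar)=\varinjlim_{L}\Br(X_L)$ with transition maps the pullbacks. By assumption $\alpha$ is algebraic, i.e.\ its image in $\Br(\Xbar)$ vanishes, so there is a finite Galois extension $L/k$ with $\alpha|_{X_L}=0$ in $\Br(X_L)$. Enlarging $\calT$ if necessary, I may assume that $L/k$ is unramified outside $\calT$, that $\calX_L:=\calX\times_{\calO_{k,\calT}}\calO_{L,\calT}$ is a smooth proper model of $X_L$ over $\calO_{L,\calT}$ (it is the base change of $\calX$), and that the isomorphism $\HH^2_{\et}(X_L,\mu_m)\xrightarrow{\sim}\HH^2_{\et}((X_L)_{\frakP},\mu_m)$ attached to this model is defined for every $\frakP$ lying over a place $\frakp\notin\calT$.

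Next I would fix a finite place $\frakp\notin\calT$ that splits completely in $L$ and choose a place $\frakP$ of $L$ above $\frakp$. Since $\frakp$ splits completely, $\F_{\frakP}=\F_{\frakp}$, and computing $\calX_L\times_{\calO_{k,\calT}}\F_{\frakp}=\calX\times_{\calO_{k,\calT}}(\calO_L\otimes_{\calO_k}\F_{\frakp})=\coprod_{\frakP'\mid\frakp}X_{\frakp}$ shows that the reduction $(X_L)_{\frakP}$ is the connected component of this scheme indexed by $\frakP$, so $(X_L)_{\frakP}\cong X_{\frakp}$ over $\F_{\frakp}$, and under this identification the pullback $\HH^2_{\et}(X_{\frakp},\mu_m)\to \HH^2_{\et}((X_L)_{\frakP},\mu_m)$ induced by $\calX_L\to\calX$ is the identity. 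Because the model isomorphisms $\HH^2_{\et}(X,\mu_m)\xrightarrow{\sim}\HH^2_{\et}(X_{\frakp},\mu_m)$ and $\HH^2_{\et}(X_L,\mu_m)\xrightarrow{\sim}\HH^2_{\et}((X_L)_{\frakP},\mu_m)$ both factor through $\HH^2_{\et}$ of $\calX$, resp.\ of $\calX_L$, they are compatible with the pullback along $\calX_L\to\calX$; the Kummer sequence being functorial, chasing a lift $\tilde\alpha\in\HH^2_{\et}(X,\mu_m)$ of $\alpha$ around the resulting commutative diagram shows that the $\F_{\frakp}$-specialization $\alpha_{\frakp}\in\Br(X_{\frakp})[m]$ is the reduction at $\frakP$ of $\alpha|_{X_L}$. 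Since $\alpha|_{X_L}=0$, we conclude $\alpha_{\frakp}=0$.

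Finally, by the Chebotarev density theorem the set of finite places of $k$ splitting completely in the Galois extension $L/k$ has natural density $1/[L:k]>0$, and deleting the finitely many places of $\calT$ leaves the density unchanged; this set is the required $\calS$. The only points that need care are the colimit identification of ``algebraic'' with ``trivial over a finite extension,'' and the compatibility of the specialization map with pullback along $\Spec\calO_L\to\Spec\calO_k$ (which ultimately just reflects functoriality of \'etale cohomology and of the chosen model isomorphisms) — neither is a genuine obstacle, and indeed the substance of Theorem~\ref{thm:main} lies entirely in the complementary case where $\alpha$ has order $m'>1$ in $\Br(\Xbar)$.
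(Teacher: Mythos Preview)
Your argument is correct and follows the same strategy as the paper: pass to a finite Galois extension over which $\alpha$ dies, then use Chebotarev on the completely split primes and the compatibility of specialization with base change. The only cosmetic difference is that you produce the extension $L$ directly from the colimit $\Br(\Xbar)=\varinjlim_L\Br(X_L)$, whereas the paper takes the splitting field of $\NS(\Xbar)$; the remainder of the two arguments is identical.
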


\begin{proof}
Let $K/k$ be a finite Galois extension that splits $\NS(\Xbar)$, i.e., an extension for which $\NS(X_{K}) \isom \NS(\Xbar)$.  We claim that for $\frakp \in \calO_{k,\calT}$ that split completely in $\calO_{K}$, the class $\alpha_\frakp \in \Br(X_\frakp)$ is trivial.  The set of such $\frakp$ has positive natural density, by the Chebotarev density theorem. To see that $\alpha_\frakp$ is trivial for such $\frakp$, we make use of the following commutative diagram, where $\frakP$ is any place in $\calO_K$ lying over $\frakp$, and where the horizontal maps are as described at the beginning of Section \ref{s:proof}: 
\[\xymatrix{ \Br(X)[m] \ar[r] \ar[d] & \Br(X_\frakp)[m] \ar[d] \\
\Br(X_K)[m] \ar[r] & \Br(X_\frakP)[m].
}\]

\noindent Since $\frakp$ is completely split, we know that $\F_\frakp = \F_\frakP$, hence $X_\frakp \cong X_\frakP$, and so the right vertical arrow is an isomorphism. Since $\alpha$ is algebraic, the image $\alpha_K \in \Br(X_K)[m]$ is already zero, and so it maps to zero in $\Br(X_\frakP)[m]$. Hence $\alpha$ maps to zero in $\Br(X_\frakp)[m]$.
\end{proof}

By Lemma~\ref{lem:alpha algebraic}, it suffices to prove Theorem~\ref{thm:main} in the case that $\alpha$ is transcendental, i.e. $1<m'\leq m$.
By the commutativity of the diagram
\[
\xymatrix{ 0 \ar[r] & \NS(X)\otimes \Z/m\Z \ar[r] \ar[d] & \HH^2_{\et}(X,\mu_m) \ar[r] \ar[d]  & \Br(X)[m] \ar[d] \ar[r] & 0 \\
0 \ar[r] & \left(\NS(\Xbar)\otimes \Z/m\Z\right)^{\Gal(\kbar/k)} \ar[r] & \HH^2_{\et}(\Xbar, \mu_m)^{\Gal(\kbar/k)} \ar[r] & \left(\Br(\Xbar)[m]\right)^{\Gal(\kbar/k)} & 
}\]
with exact rows, we can pick a lift of $\alpha \in \Br(X)$ to a Galois-invariant class in $\HH^2_{\et}(\Xbar, \mu_{m})$, which we will again call $\alpha$. Since the Brauer class is transcendental, $\alpha\in \HH^2_{\et}(\Xbar, \mu_{m})$ is not in the image of $\NS(\Xbar)\otimes \Z/m\Z$, and we can apply the results of Section \ref{s:prelims}.

Write $m=\ell_1^{e_1}\cdots \ell_r^{e_r}$ with $\ell_i$ distinct primes, $1\leq i \leq r$. Analogous to the notation introduced in \S\S\ref{s:ingredients}--\ref{s:prelims}, we will write $A_{m,n} \subset P_{m,n}$ for $\alpha'\in P_{m,n}$ which reduce to $\alpha \bmod m$ and $\Gamma_{m,n} \subset \Aut_{\Z/m^n\Z}(P_{m,n})$ for the image of $\rho_{m,n}$. By identifying $P_{m,n}$ with $\bigoplus_{i=1}^r P_{\ell_i^{e_i},n}$, we find that $\alpha = (\alpha_i)_{1\leq i \leq r}$ for $\alpha_i \in \HH^2_{\et}(\Xbar, \mu_{\ell_i^{e_i}})$.

For each $\alpha_i$, $1 \leq i \leq r$, fix the $n_i$ and $\gamma_i\in U_{\ell_i^{e_i},n_i}$ coming from Lemma~\ref{lem:effectivegoodmatrices}. By taking any of the $n_i$ larger if necessary, we can assume without loss of generality that $n_1=n_2=...=n_r$, and we will call this common integer $n$. By the proof of Lemma~\ref{lem:effectivegoodmatrices}, there is an $\alpha_i' \in A_{\ell_i^{e_i},n}$ such that $\gamma_i(\alpha_i')=\alpha_i'$. Set 
\[
\alpha':=(\alpha_i')_{1\leq i \leq r} \in A_{m,n},\quad\text{and}\quad \gamma := (\gamma_i)_{1\leq i \leq n},
\]
which is in $\Gamma_{m,n}$ by Corollary \ref{cor:adelicopenimage} (enlarge $n$ if necessary so that $n > m_0$ in the corollary). By construction, we have $\gamma(\alpha')=\alpha'$. We note there is no guarantee that $\alpha'$ is Galois-invariant.  
Recall from \S~\ref{ss:Frobcc} that choosing an embedding $\kbar \hookrightarrow \kbar_\frakp$ is equivalent to choosing an element 
\[
\Frob_\pp \in \Gal(\kbar_\frakp/k_\frakp) \hookrightarrow \Gal(\kbar/k)
\]
that reduces to $\Frob \in \Gal(\Fbar_\frakp/\F_\frakp)$, well-defined up to the inertia group $I_\frakp$.

Since $\alpha'$ need not be Galois invariant, its $\bmod\, \frakp$ image in $\HH^2_{\et}(\Xbar_\frakp, \mu_{m^n})$ depends on the above choice of element in the conjugacy class $[\Frob_\frakp]\subset \Gal(\kbar/k)$. However, $\alpha'$ reduces to $\alpha\in P_{m,1}\cong \HH^2_{\et}(\Xbar, \mu_m)$, which \emph{is} Galois invariant. As a consequence, the image of $\alpha$ modulo $\frakp$ is well-defined: call it $\alpha_\frakp$. 

\begin{prop}
\label{prop:algebraicBrclass}
Suppose that the $\Gamma_{m,n}$-conjugacy class $\rho_{m,n}([\Frob_\frakp])$ coincides with the $\Gamma_{m,n}$-conjugacy class of $\gamma$. Then $\alpha_\frakp$ is algebraic. 
\end{prop}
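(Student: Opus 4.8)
The plan is to translate the hypothesis — that $\rho_{m,n}([\Frob_\frakp])$ is $\Gamma_{m,n}$-conjugate to $\gamma$ — into the statement that $\alpha_\frakp$ lies in the image of $\NS(\Xbar_\frakp)\otimes\Z/m\Z$ inside $\HH^2_\et(\Xbar_\frakp,\mu_m)$, which by the Kummer diagram at the start of \S\ref{s:proof} forces $\alpha_\frakp=0$ in $\Br(\Xbar_\frakp)$, hence algebraic. First I would choose a concrete embedding $\kbar\hookrightarrow\kbar_\frakp$, i.e.\ a representative $\Frob_\frakp\in\Gal(\kbar/k)$, so that $\rho_{m,n}(\Frob_\frakp)$ is \emph{equal} to $\gamma$ as an element of $\Gamma_{m,n}$ (not merely conjugate): conjugating the representative of the conjugacy class corresponds to changing the embedding, and this does not affect the image $\alpha_\frakp$ of the Galois-invariant class $\alpha\bmod\frakp$, as noted just before the Proposition. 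With this choice, $\rho_{m,n}(\Frob_\frakp)$ acts on $\HH^2_\et(\Xbar,\mu_{m^n})\cong P_{m,n}$ exactly as $\gamma$, and since $\gamma(\alpha')=\alpha'$ by construction, the vector $\alpha'\in P_{m,n}$ is fixed by $\Frob_\frakp$, i.e.\ lies in the $(+1)$-eigenspace of $\rho_{m,n}(\Frob_\frakp)$.

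Next I would push everything through the specialization isomorphism $\HH^2_\et(\Xbar,\mu_{m^n})\xrightarrow{\sim}\HH^2_\et(\Xbar_\frakp,\mu_{m^n})$ and the identification of the action of $\Frob_\frakp$ with that of geometric Frobenius $\Frob$ on $\Xbar_\frakp$ recalled in \S\ref{ss:Frobcc}. The $(+1)$-eigenspace of $\Frob$ on $\HH^2_\et(\Xbar_\frakp,\Z_\ell(1))$ — equivalently, the subspace where all eigenvalues of $\Frob$ are roots of unity equal to $1$ — is, by the Tate conjecture for K3 surfaces over finite fields together with the integral Tate conjecture cited in \S\ref{ss:Frobcc}, precisely $\NS(\Xbar_\frakp)\otimes\Z_\ell$ after extending scalars; more precisely, one uses that $\gamma\in U_{\ell^e,n}$, so by Proposition~\ref{lem:goodmatrices} and Lemma~\ref{lem:effectivegoodmatrices} the only admissible root-of-unity eigenvalue of (any lift of) $\gamma$ is $1$, and the $(+1)$-eigenspace of $\gamma$ has the expected dimension. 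Reducing mod $m$, the class $\alpha'$ — which reduces to $\alpha$ — lies in the reduction of this $(+1)$-eigenspace, and by Lemma~\ref{lem:effectivegoodmatrices} that reduction mod $\ell^e$ (prime-by-prime, then assembled over $\ell_1,\dots,\ell_r$ via Corollary~\ref{cor:adelicopenimage}) contains $\alpha$. Since integral Tate identifies the integral $(+1)$-eigenspace of $\Frob$ with $\NS(\Xbar_\frakp)\otimes\Z_\ell$, we conclude $\alpha_\frakp$ lands in the image of $\NS(\Xbar_\frakp)\otimes\Z/m\Z\to\HH^2_\et(\Xbar_\frakp,\mu_m)$, so its image in $\Br(\Xbar_\frakp)[m]$ vanishes: $\alpha_\frakp$ is algebraic.

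The main obstacle I anticipate is the careful bookkeeping at the transition between the ``eigenvalue'' picture over $\Q_\ell$ (or $\Qbar_\ell$) and the ``integral'' picture mod $m^n$: one needs that the $(+1)$-eigenspace of the actual Frobenius matrix $\rho_{\ell^e,n}(\Frob_\frakp)$, and not just of some lift to $\calMT(\HH)(\Z_\ell)$, contains a reduction of $\alpha$, and that passing to $\bmod\,\ell^e$ does not lose the class — this is exactly what the congruence $\bmod\,\ell^{en_0}$ in Lemma~\ref{lem:effectivegoodmatrices} was arranged to guarantee, combined with the open image theorem (Corollary~\ref{prop:openimageuseful}) ensuring that the constructed $\gamma$ is genuinely realized by some $\Frob_\frakp$ under the density statement to come. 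A secondary subtlety is invoking the integral Tate conjecture in the precise form ``$(+1)$-eigenspace of $\Frob$ on $\HH^2_\et(\Xbar_\frakp,\Z_\ell(1))$ equals $\NS(\Xbar_\frakp)\otimes\Z_\ell$'' — the statement cited in \S\ref{ss:Frobcc} gives this modulo handling of the $\Gal(\Fbar_\frakp/\F_\frakp)$-action and the fact that Frobenius eigenvalues that are roots of unity of admissible order are in fact $1$ here because $\gamma$ was chosen in $U_{\ell^e,n}$; I would spell that reduction out but expect no genuine difficulty beyond diagram-chasing.
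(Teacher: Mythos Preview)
Your proposal is correct and takes essentially the same approach as the paper: choose $\sigma\in[\Frob_\frakp]$ with $\rho_{m,n}(\sigma)=\gamma$, observe that $\rho_{\ell_i}(\sigma)$ lies over $\gamma_i\in U_{\ell_i^{e_i},n}$ and hence lands in $U(\Z_{\ell_i})$, so its only root-of-unity eigenvalue is $1$; then use the integral Tate conjecture to identify the $(+1)$-eigenspace with algebraic classes, and conclude that $\alpha$ reduces into $\NS(\Xbar_\frakp)\otimes\Z/m\Z$. The only stylistic difference is that you invoke the conclusion of Lemma~\ref{lem:effectivegoodmatrices} directly (``the reduction $\bmod\,\ell^e$ of the $(+1)$-eigenspace of any lift contains $\alpha$''), whereas the paper re-derives this at level $m^n$ by arguing that the image of the integral eigenspace $V$ surjects onto $\frakE(\gamma,1)$ because the latter has minimal rank, and then uses $\alpha'\in\frakE(\gamma,1)$; both routes are equivalent and your direct invocation is arguably cleaner.
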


\begin{proof}
We begin with an outline of the proof. First, we show that there is a choice of element $\sigma \in [\Frob_\frakp]$, corresponding to $\gamma$, for which $\alpha'$ maps to an algebraic class under the induced isomorphism $P_{m,n}\cong \HH^2_{\et}(\Xbar_\frakp, \mu_{m^n})$ of cyclic modules. Then we show that, for any other element in the $\Gamma_{m,n}$-conjugacy class $\rho_{m,n}([\Frob_\frakp])$, there is a choice of element in $[\Frob_\frakp]$ for which a different class in the orbit $\Gamma_{m,n}\cdot \alpha'$ maps to an algebraic class in $\HH^2_{\et}(\Xbar_\frakp, \mu_{m^n})$. The last step is to observe that every class in  $\Gamma_{m,n}\cdot \alpha'$ is equal to $\alpha \bmod m$, so independently of the choice of element in $\rho_{m,n}([\Frob_\frakp])$, the image of $\alpha$ in $\HH^2_{\et}(\Xbar_\frakp,\mu_m)$ is well-defined and $\alpha$ maps to an algebraic class in $\HH^2_{\et}(\Xbar_\frakp, \mu_m)$.  

Since $\gamma \in \rho_{m,n}([\Frob_\frakp])$, choose $\sigma \in [\Frob_\frakp]$ so that $\rho_{m,n}(\sigma) = \gamma$. This choice determines the horizontal isomorphisms in the following commutative diagram
\begin{equation}
\label{eq:commdiagram}
\xymatrix{
\bigoplus_{i=1}^r\HH^2_{\et}(\Xbar,\Z_{\ell_i}(1)) \ar[r]^{\sim} \ar@{>>}[d] & \bigoplus_{i=1}^r\HH^2_{\et}(\Xbar_\frakp,\Z_{\ell_i}(1)) \ar@{>>}[d] \\
\HH^2_{\et}(\Xbar,\mu_{m^n}) \ar[r]^{\sim} & \HH^2_{\et}(\Xbar_\frakp,\mu_{m^n})
}
\end{equation}
in such a way that the action of $\left(\prod_{i=1}^r\rho_{\ell_i}\right)(\sigma)$ on $\bigoplus_{i=1}^r\HH^2_{\et}(\Xbar,\Z_{\ell_i}(1))$ is compatible with the diagonal action of $\Frob$ on $\bigoplus_{i=1}^r\HH^2_{\et}(\Xbar_\frakp,\Z_{\ell_i}(1))$ (see~\S\ref{ss:Frobcc}). 
Note that the vertical maps are surjective because $\HH^3_{\et}(\Xbar, \Z_{\ell_i}(1))=0$ for K3 surfaces. The vertical surjections are induced by the surjection 
$$\prod_{i=1}^r\Z_{\ell_i}(1) \twoheadrightarrow \mu_{m^n},$$ 
so the action of $\left(\prod_{i=1}^r\rho_{\ell_i}\right)(\sigma)$ on $\bigoplus_{i=1}^r\HH^2_{\et}(\Xbar,\Z_{\ell_i}(1))$ is compatible via the surjection with the action of $\rho_{m,n}(\sigma)$ on $\HH^2_{\et}(\Xbar,\mu_{m^n})$. 

Since $\rho_{\ell_i}(\sigma)$ reduces to $\rho_{\ell_i^{e_i},n}(\sigma)$ and $\rho_{\ell_i^{e_i},n}(\sigma)=\gamma_i\in U_{\ell_i^{e_i},n}$ for $1 \leq i \leq r$, we have $\rho_{\ell_i}(\sigma) \in U(\Z_{\ell_i})$. 

By the Integral Tate conjecture, the classes in $\HH^2_{\et}(\Xbar_\frakp, \Z_{\ell_i}(1))$ on which $\Frob$ acts by roots of unity are exactly the algebraic classes.  Hence the classes of $\HH^2_{\et}(\Xbar, \Z_{\ell_i}(1))$ on which $\rho_{\ell_i}(\sigma)$ acts by roots of unity are exactly the classes that map to algebraic classes in $\HH^2_{\et}(\Xbar_\frakp, \Z_{\ell_i}(1))$.  Since $\rho_{\ell_i}(\sigma) \in U(\Z_{\ell_i})$, the only eigenvalue of $\rho_{\ell_i}(\sigma)$ that is an admissible root of unity is $1$.

By the compatibility of the action of $\left(\prod_{i=1}^r\rho_{\ell_i}\right)(\sigma)$ on $\bigoplus_{i=1}^r\HH^2_{\et}(\Xbar, \Z_{\ell_i}(1))$ and $\gamma$ on $\HH^2_{\et}(\Xbar,\mu_{\ell^n})$ in the above diagram, the image of the eigenspace 
$$V:=\frakE\left(\left(\prod_{i=1}^r\rho_{\ell_i}\right)(\sigma),1\right) \subset \bigoplus_{i=1}^r\HH^2_{\et}(\Xbar, \Z_{\ell_i}(1))$$ 
in $\HH^2_{\et}(\Xbar,\mu_{m^n})$ is a subspace of the eigenspace $\frakE(\gamma,1)\subset \HH^2_{\et}(\Xbar,\mu_{m^n})$.  
On the other hand, since $\rho_{\ell_i^{e_i},n}(\sigma)=\gamma_i\in U_{\ell_i^{e_i},n}$ and $\gamma$ projects onto $\gamma_i$ for each $1 \leq i \leq r$, the commutative diagram
\[\xymatrix{\calMT(\HH) \ar[dr] \ar[r] & \Aut(\HH^2_{\et}(\Xbar, \mu_{m^n})) \ar@{>>}[d] \\
  &  \Aut(\HH^2_{\et}(\Xbar, \mu_{\ell_i^{e_in}}))
}\]
implies that all of the preimages of $\gamma$ in $\calMT(\HH)$ lie in $U$. 
This forces the rank of $\frakE(\gamma,1)$ as a $\Z/m^n\Z$-module to be minimal, so the image of $V$ in $\HH^2_{\et}(\Xbar,\mu_{m^n})$ must be all of $\frakE(\gamma,1)$.  In particular, since $\gamma(\alpha') = \alpha'$, we deduce that the image of $\alpha'$ in $\HH^2_{\et}(\Xbar_\frakp,\mu_{m^n})$ via \eqref{eq:commdiagram} is the image of a tuple of algebraic classes in $\bigoplus_{i=1}^r\HH^2_{\et}(\Xbar_\frakp, \Z_{\ell_i}(1))$. Thus, under the identification determined by $\gamma \in \rho_{m,n}([\Frob_\frakp])$, $\alpha'$ becomes algebraic modulo $\frakp$. 

Next, consider a different element $\gamma'=\eta\gamma \eta^{-1} \in \rho_{m,n}([\Frob_\frakp])$ for some $\eta \in \Gamma_{m,n}$, which fixes $\eta(\alpha') \in P_{m,n}$. Note that $\eta(\alpha')\in A_{m,n}$ because $\alpha$ is Galois-invariant. By the same argument as above, it follows that for this choice, the image of $\eta(\alpha')$ in $\HH^2_{\et}(\Xbar_\frakp,\mu_{m^n})$ is the image of a tuple of algebraic classes in $\bigoplus_{i=1}^r\HH^2_{\et}(\Xbar_\frakp, \Z_{\ell_i}(1))$. Thus, under the identification determined by $\gamma' \in \rho_{m,n}([\Frob_\frakp])$, $\eta(\alpha')$ becomes algebraic modulo $\frakp$. 

Now reduce $\bmod\, m$, and recall that $\pi^m_{n,1}$ is the reduction map from level $m^{n}$ to level $m$. Observe that
$$\pi^m_{n,1}(\Gamma_{m,n}\cdot \alpha')=\{\alpha\},$$
since $\Gamma_{m,n}\cdot\alpha'\subset A_{m,n}$. Thus for every element in the conjugacy class $\rho_{m,n}([\Frob_\frakp])$, the image $\alpha_\frakp$ of $\alpha$ in $\HH^2_{\et}(\Xbar_\frakp, \mu_m)$ is an algebraic class. Finally, by the Kummer sequence, this means $\alpha$ maps to zero in $\Br(\Xbar_\frakp)[m]$. Therefore, $\alpha_\frakp$ is algebraic, as desired.
\end{proof}

\begin{cor}
There is a set $\calS$ of places of $k$ of positive natural density such that for each $\frakp \in \calS$, $\alpha_\frakp$ is algebraic.
\end{cor}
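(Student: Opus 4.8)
The plan is to deduce this corollary from Proposition~\ref{prop:algebraicBrclass} by a direct application of the Chebotarev density theorem; almost all of the substantive work has already been carried out in Sections~\ref{s:prelims} and~\ref{s:proof}. First I would observe that, because $\Gamma_{m,n}$ is finite, the representation $\rho_{m,n}\colon \Gal(\kbar/k)\to \Aut_{\Z/m^n\Z}(P_{m,n})$ factors as $\Gal(\kbar/k)\twoheadrightarrow \Gal(K/k)\xrightarrow{\sim}\Gamma_{m,n}$, where $K$ is the fixed field of $\ker\rho_{m,n}$, a finite Galois extension of $k$. After enlarging $\calT$ so that it contains every place of $k$ ramified in $K$, the conjugacy class $\rho_{m,n}([\Frob_\frakp])\subseteq \Gamma_{m,n}$ corresponds, for each finite place $\frakp\notin\calT$, exactly to the Frobenius conjugacy class of $\frakp$ in $\Gal(K/k)$.

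Next I would check that the element $\gamma=(\gamma_i)_{1\le i\le r}\in \Gamma_{m,n}$ constructed above genuinely lies in the image of $\rho_{m,n}$ — this is exactly the content of Corollary~\ref{cor:adelicopenimage}, after enlarging $n$ so that $n>m_0$, which guarantees that a tuple of automorphisms congruent to the identity modulo $m^{m_0}$ belongs to $\Gamma_{m,n}$. Consequently its conjugacy class $C_\gamma\subseteq \Gamma_{m,n}$ is nonempty, and by the Chebotarev density theorem the set
\[
\calS:=\{\frakp\notin\calT: \rho_{m,n}([\Frob_\frakp])=C_\gamma\}
\]
has natural density $|C_\gamma|/|\Gamma_{m,n}|>0$. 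Finally, for each $\frakp\in\calS$ I would invoke Proposition~\ref{prop:algebraicBrclass} to conclude that $\alpha_\frakp$ is algebraic, which establishes the corollary. Combined with Lemma~\ref{lem:alpha algebraic} (which handles the case where $\alpha$ is already algebraic), this shows that for an arbitrary $\alpha\in\Br(X)$ the class $\alpha_\frakp$ is algebraic for a set of places of positive natural density, which is the key step toward Theorem~\ref{thm:main}.

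I do not expect a serious obstacle at this stage, since Proposition~\ref{prop:algebraicBrclass} has already done the heavy lifting. The two points requiring a little attention are: \emph{(i)} confirming that $\gamma$ lies in $\Gamma_{m,n}$, so that the Chebotarev count is taken against a \emph{nonempty} conjugacy class — this is handled by the adelic open image theorem through Corollary~\ref{cor:adelicopenimage}; and \emph{(ii)} recalling, as in the discussion preceding Proposition~\ref{prop:algebraicBrclass}, that the reduction $\alpha_\frakp\in\HH^2_{\et}(\Xbar_\frakp,\mu_m)$ is well-defined independently of the choice of embedding $\kbar\hookrightarrow\kbar_\frakp$ and of representative of $[\Frob_\frakp]$, so that the conclusion ``$\alpha_\frakp$ is algebraic'' is unambiguous.
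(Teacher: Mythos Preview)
Your proposal is correct and follows essentially the same approach as the paper: define $\calS$ to be the set of primes of good reduction whose $\Gamma_{m,n}$-Frobenius conjugacy class equals that of $\gamma$, apply Chebotarev (using that $\Gamma_{m,n}$ is finite) to obtain positive density, and invoke Proposition~\ref{prop:algebraicBrclass}. Your write-up is somewhat more explicit---spelling out the factorization through $\Gal(K/k)$, the density $|C_\gamma|/|\Gamma_{m,n}|$, and the role of Corollary~\ref{cor:adelicopenimage} in ensuring $\gamma\in\Gamma_{m,n}$---but the argument is the same.
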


\begin{proof}
Let $\calS$ be the set of $\frakp$ such that $X$ has good reduction at $\frakp$ and the $\Gamma_{m,n}$-conjugacy class $\rho_{m,n}([\Frob_\frakp])$ is the $\Gamma_{m,n}$-conjugacy class of $\gamma$. Since $\Gamma_{m,n}$ is a finite group, we know by the Chebotarev Density Theorem that $\calS$ has positive natural density. By Proposition \ref{prop:algebraicBrclass}, $\alpha_\frakp$ is algebraic for every $\frakp \in \calS$. 
\end{proof}

\begin{cor}
For every $\frakp \in \calS$, $\alpha_\frakp = 0 \in \Br(X_\frakp)$.
\end{cor}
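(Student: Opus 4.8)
The preceding corollary only gives that, for $\frakp\in\calS$, the class $\alpha_\frakp$ becomes trivial in $\Br(\Xbar_\frakp)$; since for a K3 surface over a finite field the map $\Br(X_\frakp)\to\Br(\Xbar_\frakp)$ need not be injective --- its kernel is $\HH^1(\Gal(\Fbar_\frakp/\F_\frakp),\NS(\Xbar_\frakp))$, by the Hochschild--Serre spectral sequence together with $\Br(\F_\frakp)=0$ --- this does not yet suffice. The plan is to strengthen the conclusion of Proposition~\ref{prop:algebraicBrclass}: I will show that for $\frakp\in\calS$ the distinguished lift $\widetilde{\alpha}_\frakp\in\HH^2_{\et}(X_\frakp,\mu_m)$ of $\alpha_\frakp$ lies in the image of $\NS(X_\frakp)\otimes\Z/m\Z$, i.e.\ that $\alpha_\frakp$ is the reduction of a divisor class defined over $\F_\frakp$ and not merely over $\Fbar_\frakp$.

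First I would go back through the proof of Proposition~\ref{prop:algebraicBrclass} keeping track of which classes occur. There one chooses $\sigma\in[\Frob_\frakp]$ with $\rho_{m,n}(\sigma)=\gamma$, and under the comparison isomorphisms of \S\ref{ss:Frobcc} the eigenspace $V=\frakE\bigl((\prod_i\rho_{\ell_i})(\sigma),1\bigr)$ corresponds to $\bigoplus_i\HH^2_{\et}(\Xbar_\frakp,\Z_{\ell_i}(1))^{\Frob}$. Because $\Frob$ topologically generates $\Gal(\Fbar_\frakp/\F_\frakp)$ and the action is continuous, this $\Frob$-fixed space equals the full $\Gal(\Fbar_\frakp/\F_\frakp)$-invariant subspace, which by the Integral Tate Conjecture for the K3 surface $X_\frakp$ over the finite field $\F_\frakp$ is exactly $\bigoplus_i\NS(X_\frakp)\otimes\Z_{\ell_i}$. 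Hence the class $\alpha'$ produced in that proof is the image of a tuple of cycle classes of divisors rational over $\F_\frakp$; reducing modulo $m$, the image of $\alpha_\frakp$ in $\HH^2_{\et}(\Xbar_\frakp,\mu_m)$ lies in the image of the composite $\NS(X_\frakp)\otimes\Z/m\Z\to\HH^2_{\et}(X_\frakp,\mu_m)\to\HH^2_{\et}(\Xbar_\frakp,\mu_m)$.

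Second I would verify that $\HH^2_{\et}(X_\frakp,\mu_m)\to\HH^2_{\et}(\Xbar_\frakp,\mu_m)$ is injective: in the Hochschild--Serre spectral sequence the two graded pieces of the kernel, $\HH^1(\Gal(\Fbar_\frakp/\F_\frakp),\HH^1_{\et}(\Xbar_\frakp,\mu_m))$ and $\HH^2(\Gal(\Fbar_\frakp/\F_\frakp),\mu_m)$, both vanish --- the first because $\HH^1_{\et}(\Xbar_\frakp,\mu_m)\cong\Pic(\Xbar_\frakp)[m]=0$ for the K3 surface $X_\frakp$ (global units are divisible and $\Pic(\Xbar_\frakp)=\NS(\Xbar_\frakp)$ is torsion-free), the second because $\Gal(\Fbar_\frakp/\F_\frakp)\cong\Zhat$ has cohomological dimension one on torsion modules. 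Combining the two steps then finishes the proof: choosing $\nu\in\NS(X_\frakp)\otimes\Z/m\Z$ whose cycle class and $\widetilde{\alpha}_\frakp$ have the same image in $\HH^2_{\et}(\Xbar_\frakp,\mu_m)$, injectivity forces $\widetilde{\alpha}_\frakp$ to equal that cycle class already in $\HH^2_{\et}(X_\frakp,\mu_m)$, so by the Kummer sequence over $X_\frakp$ it maps to $0$ in $\Br(X_\frakp)[m]$, that is $\alpha_\frakp=0$. I expect the first step to be the main obstacle: one must take care that the eigenspace appearing in Proposition~\ref{prop:algebraicBrclass} is the eigenspace for the eigenvalue exactly $1$ --- the Frobenius-fixed space --- so that the Integral Tate Conjecture produces the arithmetic rather than merely the geometric N\'eron--Severi group, and that this refinement survives the comparison isomorphisms of \S\ref{ss:Frobcc} and reduction modulo $m$.
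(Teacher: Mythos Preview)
Your proposal is correct and follows essentially the same route as the paper: both revisit the proof of Proposition~\ref{prop:algebraicBrclass} to observe that the $(+1)$-eigenspace $V$ is the $\Frob$-fixed subspace, hence (via $\NS(X_\frakp)\simeq\NS(\Xbar_\frakp)^\Gamma$, which follows from $\Br(\F_\frakp)=0$ and Hochschild--Serre) corresponds to the \emph{arithmetic} N\'eron--Severi group, so the lift of $\alpha_\frakp$ lies already in $\NS(X_\frakp)\otimes\Z/m\Z$ and the Kummer sequence over $\F_\frakp$ finishes. The one substantive difference is that you explicitly verify the injectivity of $\HH^2_{\et}(X_\frakp,\mu_m)\to\HH^2_{\et}(\Xbar_\frakp,\mu_m)$ via the Hochschild--Serre spectral sequence with $\mu_m$ coefficients; the paper's argument needs this step as well (to pass from a class in $\NS(X_\frakp)\otimes\Z/m\Z$ whose image in $\HH^2_{\et}(\Xbar_\frakp,\mu_m)$ agrees with that of $\widetilde{\alpha}_\frakp$ to equality already in $\HH^2_{\et}(X_\frakp,\mu_m)$) but leaves it implicit, so your version is slightly more complete.
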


\begin{proof}
First, the group $\Br(\F_\frakp)$ is trivial since $\F_\frakp$ is a finite field; the long exact sequence of low-degree terms for the Hochschild--Serre spectral sequence shows that as a consequence, for $\Gamma = \Gal(\Fbar_\frakp/\F_\frakp)$, the natural map 
$$\NS(X_\frakp) \to \NS(\Xbar_\frakp)^\Gamma$$ 
is an isomorphism.  In the proof of Proposition~\ref{prop:algebraicBrclass}, we see that $\alpha_\frakp$ comes from a class in $\NS(\Xbar_\frakp)\otimes \Z/m\Z$, and moreover that $\alpha_\frakp$ comes from a class on which $\Frob_\frakp$ acts trivially. Thus by continuity of the $\Gamma$-action, this implies that $\alpha_\frakp \in \HH^2_{\et}(\Xbar_\frakp, \mu_m)$ lifts to $(\NS(\Xbar_\frakp)\otimes \Z/m\Z)^{\Gamma}$ (note that $\Gamma$ acts trivially on $\Z/m\Z$ here).  As a consequence, $\alpha_\frakp$ lifts further to $\NS(X_\frakp)\otimes \Z/m\Z$.  By exactness of 
$$0 \to \NS(X_\frakp)\otimes \Z/m\Z \to \HH^2_{\et}(X_\frakp, \mu_m)\to \Br(X_\frakp)[m] \to 0,$$
we have that $\alpha_\frakp \in \Br(X_\frakp)[m]$ must actually be zero. 
\end{proof}

\noindent This completes the proof of Theorem~\ref{thm:main}. \qed

\section{Application: rational reductions of general cubic fourfolds}
\label{s:cubics}
Let $Y \subset \PP^5_k$ denote a smooth cubic hypersurface, a {\em cubic fourfold}.  

We review the geometry of special cubic fourfolds \cite{HassettSpecial}. Let $\calC$ denote the moduli stack of cubic fourfolds.
Fix $h$ as the hyperplane class and consider saturated sublattices
$$ h^2 \in K \subset \HH^4(Y_{\C},\Z)$$
where $K$ has rank two and discriminant $d>0$.  We consider
pairs $(Y,K)$ where $K \subset \HH^2(Y_{\C},\Omega^2)$ parametrizes Hodge classes -- indeed, classes of
algebraic cycles as the integral Hodge conjecture is known for cubic fourfolds \cite[Th.~18]{VoisinHodge}. 
These are parametrized by a moduli space $\calC_K$.  We note basic properties: 
\begin{itemize}
\item{the image of $\calC_K \rightarrow \calC_{\C}$ depends only on $d$ and is denoted $\calC_d$;}
\item{$\calC_d$ is irreducible, and nonempty
if and only if $d\equiv 0,2\pmod{6}$ and $d>6$;}
\item{$\calC_d \subset \calC$ is defined over $\Q$;}
\item{$\calC_K \rightarrow \calC_d$ is birational onto its image if $3\nmid d$ and generically of degree two onto its image if $3\mid d$.}
\end{itemize}
The last statement reflects the fact that $K=K_d$ admits an involution fixing $h^2$ if and only if $3\mid d$.  
In particular, $\calC_{K_d}$ parametrizes cubic fourfolds with a Galois-invariant lattice of cycle classes equivalent to $K_d$.
We do not insist that the cycles themselves be defined over $k$.  

\begin{example}
\label{ex:}
$d=8$ \cite{HassettJAG} If $Y$ contains a plane $P$ then
$$K_8=\left< h^2,P \right> \simeq \left( \begin{matrix} 3 & 1 \\ 1 & 3 \end{matrix}\right).$$
The specialization of a cubic fourfold containing a plane must also contain a plane. Thus 
$\calC_8$ equals the cubic fourfolds containing at least one plane. The homology class of a plane 
in a cubic fourfold contains a unique plane. (A plane $P\subset Y$ yields a Lagrangian plane $P^{\vee} \subset F_1(Y)$
in the variety of lines which is necessarily rigid; a second plane $P'$ homologous to $P$ is not compatible with
the geometry of the projection $\pi_P\colon\Bl_P(X) \rightarrow \PP^2$.) So $\calC_{K_8}$ equals the space of pairs $(Y,P)$ where
$P\subset Y$ is a plane.  

$d=18$ \cite{AHTV} If $Y$ contains $T$, a smooth projection of a sextic del Pezzo surface, then
$$K_{18}=\left< h^2,T \right> \simeq \left( \begin{matrix} 3 & 6 \\ 6 & 18 \end{matrix} \right).$$
Typically, such a surface induces a fibration \cite[\S\S 1,2]{AHTV}
\begin{equation} \label{fibrationdP}
\psi: \widetilde{Y}=\Bl_R(Y) \rightarrow \PP^2
\end{equation}
with $T$ as a fiber; the center $R$ is residual to $T$ in a complete intersection of two quadrics on $Y$.    
Let $\calW$ denote the moduli space of such $(Y,\psi)$. Then we have
$$\calW \rightarrow \calC_{K_{18}} \rightarrow \calC_{18}$$
where the first morphism has one-dimensional generic fiber and the second morphism is generically 
of degree two. 
\end{example}
\begin{remark}
We know little about the fibers of $\calW \rightarrow \calC_{K_{18}}$, e.g., their genus. It is unclear whether 
a given $Y \in \calC_{K_{18}}(k)$ contains a sextic del Pezzo surface defined over $k$.
\end{remark}

We review the rationality construction in discriminant eight:
\begin{prop} \label{prop:eight}
Consider a smooth cubic fourfold containing a plane $(Y,P) \in \calC_{K_8}$. 
Assume there is no plane $P' \subset Y_{\C}$ intersecting $P$ along a line.
\begin{itemize}
\item{Projecting from $P$ gives a quadric surface bundle
$$\varpi: \widetilde{Y}:=\Bl_P(Y) \rightarrow \PP^2;$$
$Y$ is rational if $\varpi$ admits a rational section. \cite{HassettJAG} }
\item{The relative variety of lines for $\varpi$ factors 
$$F_1(\varpi) \stackrel{\varphi}{\rightarrow} X \rightarrow \PP^2$$
where $\varphi$ is an \'etale $\PP^1$-bundle and the second arrow is a double cover branched along a plane sextic.
\cite[\S 5]{HVAIM}}
\item{$\varphi$ has a rational section if and only if $\varpi$ has a rational section.
If $\alpha \in \Br(X)[2]$ is the class of the \'etale $\PP^1$-bundle $\varphi$ then $Y$ is rational whenever $\alpha=0$. \cite[\S 3]{HPT}  }
\end{itemize}
In particular, $Y$ is rational whenever $\alpha$ vanishes. \qed
\end{prop}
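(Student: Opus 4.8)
The plan is to establish the three bulleted assertions in turn; each is proved in the indicated reference, and I outline below the mechanism I would follow. First I would set up the projection from $P$. A general three-plane $\Pi \isom \PP^3$ containing $P$ meets $Y$ in $P$ together with a residual quadric surface $Q_\Pi$, and such three-planes are parametrized by a copy of $\PP^2$; resolving the linear projection from $P$ (a rational map $Y \dashrightarrow \PP^2$) on $\widetilde{Y} := \Bl_P Y$ turns these quadrics into the fibers of a morphism $\varpi\colon \widetilde{Y} \to \PP^2$. This is where the hypothesis enters: the absence of a plane $P' \subset Y_\C$ meeting $P$ along a line forces every fiber of $\varpi$ to have rank at least three, so $\varpi$ is a flat quadric surface bundle whose degeneration locus is a \emph{smooth} plane sextic $D \subset \PP^2$. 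Given a rational section $s$ of $\varpi$, projection of the generic quadric $Q_\eta$ over $K := k(\PP^2)$ from the point $s(\eta)$ identifies $Q_\eta$ with $\PP^2_K$; hence $\widetilde{Y}$, and therefore $Y$, is rational over $k$. This reduces rationality of $Y$ to the existence of a rational section of $\varpi$.

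Next I would analyze the relative variety of lines $F_1(\varpi)$. The Fano scheme of lines on a smooth quadric surface is a disjoint union of two conics (its two rulings), while on a rank-three quadric it degenerates to a single conic; hence $F_1(\varpi) \to \PP^2$ is generically two-to-one, with its two sheets glued exactly over $D$. Taking the Stein factorization yields $F_1(\varpi) \xrightarrow{\varphi} X \to \PP^2$, where $X \to \PP^2$ is the double cover branched over $D$ --- a K3 surface of degree two, since $D$ is a smooth sextic --- and $\varphi$ is a conic bundle. The rank condition rules out singular fibers, so $\varphi$ is an \'etale $\PP^1$-bundle, i.e.\ a Brauer--Severi scheme of relative dimension one over $X$, and its class $\alpha$ lies in $\Br(X)[2]$ since a conic bundle is split by a quaternion algebra.

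Finally I would identify when $\varpi$ has a rational section. Over $K$ the quadric surface $Q_\eta$ has non-square discriminant (the cover $X \to \PP^2$ is irreducible), which cuts out the quadratic extension $L = k(X)/K$; its even Clifford algebra is a quaternion $L$-algebra whose Brauer class is the restriction of $\alpha$ to $\Spec L$. For a rank-four quadratic form with non-square discriminant, isotropy over $K$ is equivalent to this quaternion algebra being split over $L$, which is exactly the condition that $\varphi$ acquire a rational section, and also the condition $\alpha = 0$ in $\Br(k(X))$. Since $X$ is smooth, $\Br(X) \into \Br(k(X))$, so $\alpha = 0$ in $\Br(X)$ forces $\varpi$ to have a rational section, and the first step then yields rationality of $Y$.

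The main obstacle I anticipate is the precise dictionary between the geometry of the quadric bundle and the Brauer class: checking that the even Clifford algebra of the generic fiber really represents $\alpha$ (and not a twist of it), pinning down the rank-four quadratic-form statement relating $K$-isotropy to splitting over the discriminant extension $L$, and verifying that the hypothesis on planes delivers exactly the regularity --- smooth $D$ and no corank-two fibers --- needed for $X$ to be a smooth K3 surface and for $\varphi$ to be a genuine $\PP^1$-bundle rather than merely a conic bundle with reducible fibers. Working throughout with rational sections and function fields, rather than everywhere-defined morphisms, is what should make these identifications go through cleanly.
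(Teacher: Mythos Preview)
The paper does not give a proof of this proposition: it is stated as a summary of known results, with each bullet carrying a citation to the literature, and the \qedhere is placed immediately after the statement. Your proposal is a correct and well-organized outline of the arguments contained in those references (\cite{HassettJAG}, \cite[\S5]{HVAIM}, \cite[\S3]{HPT}), so in that sense it agrees with the paper's ``proof'' by supplying what the citations point to.

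One small sharpening: your claim that the hypothesis (no plane $P'\subset Y_\C$ meeting $P$ in a line) forces the discriminant sextic $D$ to be smooth is correct, but the reasoning you give conflates two things. The hypothesis rules out rank-$\le 2$ fibers of $\varpi$; since the singular locus of the discriminant of a quadric bundle is exactly the locus where the corank jumps to $\ge 2$, this is precisely what makes $D$ smooth. You state this, but it would strengthen the write-up to make explicit that ``rank $\ge 3$ everywhere'' $\Rightarrow$ ``$D$ smooth'' is the standard fact about discriminants of quadric bundles, rather than leaving it as an assertion. Similarly, in the third step the equivalence you invoke---a nondegenerate rank-four form over $K$ with nonsquare discriminant is isotropic over $K$ iff its even Clifford algebra splits over the discriminant extension $L$---is exactly the right input, and it is worth naming it as such (it is essentially Albert's theorem on quaternion algebras and norm forms). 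Your anticipated obstacle about identifying the even Clifford class with $\alpha$ is real but minor: both are the Brauer class of the same Severi--Brauer variety $F_1(\varpi)\to X$, so no twist can intervene.
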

The pair $(X,\alpha)$ is a twisted degree two K3 surface cf.~\cite{Kuzcubic}.

We have a similar construction in discriminant eighteen:
\begin{prop} \label{prop:eighteen}
Let $(Y,\psi)$ denote a cubic fourfold with discriminant $18$ with a fibration in sextic del Pezzo surfaces. 
\begin{itemize}
\item{The generic fiber of $\psi$ is rational if and only if $\psi$ has a rational section or 
multisection of degree prime to three  \cite[\S 3]{AHTV}.}
\item{There exists a K3 surface $X$ of degree two and an element $\alpha \in \Br(X)[3]$ such that $\psi$ has a rational section if and only if $\alpha=0$ \cite[Prop.~10]{AHTV}.}
\end{itemize}
In particular, $Y$ is rational whenever $\alpha$ vanishes.\qed
\end{prop}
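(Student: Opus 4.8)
The plan is to obtain the final clause by simply chaining the two bulleted statements, which I take as black boxes imported from \cite{AHTV}. Suppose $\alpha = 0 \in \Br(X)[3]$. By the second bullet, the fibration $\psi\colon \widetilde{Y} = \Bl_R(Y) \to \PP^2_k$ admits a rational section. A rational section is in particular a multisection of degree one, hence of degree prime to three, so the hypothesis of the first bullet is met and the generic fiber $\widetilde{Y}_\eta$ over $\eta = \Spec F$, where $F := k(\PP^2)$, is rational over $F$; that is, $\widetilde{Y}_\eta$ is $F$-birational to $\PP^2_F$.

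Next I would propagate rationality from the generic fiber to the total space and then to $Y$. Since $\widetilde{Y}_\eta$ is $F$-rational of dimension two, its function field is $k(\widetilde{Y}) = F(\widetilde{Y}_\eta) \cong F(t_1,t_2)$, purely transcendental of transcendence degree two over $F$. As $\PP^2_k$ is rational over $k$, we have $F \cong k(s_1,s_2)$, so $k(\widetilde{Y}) \cong k(s_1,s_2,t_1,t_2)$ is purely transcendental of transcendence degree four over $k$; hence $\widetilde{Y}$ is $k$-rational. Finally $\widetilde{Y} = \Bl_R(Y)$ is $k$-birational to $Y$, so $Y$ is rational over $k$, as claimed.

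The deduction is essentially formal once the two bulleted inputs are granted, so I do not expect a real obstacle here. The only point demanding care is that every step must be performed over the ground field $k$ rather than over $\kbar$: one must know that the del Pezzo fibration $\psi$ and the section furnished by the second bullet are defined over $k$, so that the passage to the generic fiber over $k(\PP^2)$ and the resulting description of $k(\widetilde{Y})$ remain statements over $k$. The genuinely substantive content — the construction of the pair $(X,\alpha)$ with $X$ a degree-two K3 surface and the equivalence "$\psi$ has a rational section $\iff \alpha = 0$" — is precisely what is cited from \cite{AHTV} and is not reproved here.
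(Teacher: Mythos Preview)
Your proposal is correct and matches the paper's approach: the paper gives no proof beyond the \qed, treating the ``In particular'' clause as an immediate formal consequence of the two bulleted inputs from \cite{AHTV}. Your chain --- $\alpha=0 \Rightarrow$ rational section $\Rightarrow$ generic fiber rational over $k(\PP^2) \Rightarrow \widetilde{Y}$ rational over $k \Rightarrow Y$ rational over $k$ --- is exactly the intended deduction, and your remark about keeping everything over $k$ is apt.
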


Thus $(X,\alpha)$ is a twisted degree two K3 surface. Combining Propositions \ref{prop:eight} and \ref{prop:eighteen} with \ref{thm:main}, we arrive at the following:

\begin{thm}
\label{thm:4foldsapp}
Let $k$ be a number field. Let $Y\subset \PP^5_k$ be a cubic fourfold such that either
\begin{itemize}
\item{$Y \in \calC_{K_8}$; or}
\item{$Y\in \calC_{K_{18}}$ has a fibration $\psi$ in sextic del Pezzo surfaces as in (\ref{fibrationdP}).}
\end{itemize}
Assume that the pair $(X,\alpha)$ arising from Proposition~\ref{prop:eight} or \ref{prop:eighteen} satisfies the hypotheses of Theorem~\ref{thm:main}.
Then there exists a set of places $\calS$ of $k$ of positive natural density for which the reduction $Y_{\frakp}$ is rational for every $\frakp \in \calS$.
\end{thm}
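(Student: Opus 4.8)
The plan is to combine Theorem~\ref{thm:main} with the rationality criteria recorded in Propositions~\ref{prop:eight} and~\ref{prop:eighteen}, being careful that ``rationality over $\C$ whenever $\alpha = 0$'' upgrades to ``rationality of the reduction $Y_\frakp$ over $\F_\frakp$ whenever $\alpha_\frakp = 0$''. First I would fix a number field $k$ and a cubic fourfold $Y$ as in the hypotheses, together with the associated twisted K3 surface $(X,\alpha)$ coming from the appropriate proposition; by assumption $(X,\alpha)$ satisfies the hypotheses of Theorem~\ref{thm:main}, so there is a set $\calS_0$ of places of $k$ of positive natural density with $\alpha_\frakp = 0 \in \Br(X_\frakp)$ for all $\frakp \in \calS_0$. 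The whole point is that the geometric constructions in Propositions~\ref{prop:eight} and~\ref{prop:eighteen} (the blow-up, the quadric surface bundle $\varpi$ resp.\ the sextic del Pezzo fibration $\psi$, the relative variety of lines $F_1(\varpi)$, the étale $\PP^1$-bundle $\varphi\colon F_1(\varpi)\to X$, the double cover $X \to \PP^2$, and the Brauer class $\alpha$ as the class of $\varphi$) are all defined over $k$ and spread out over $\Spec \calO_{k,\calT}$ for a sufficiently large finite set of places $\calT$.

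The key steps, in order, are as follows. (1) Choose smooth proper models $\calY \to \Spec\calO_{k,\calT}$ and $\calX \to \Spec\calO_{k,\calT}$ of $Y$ and $X$ over a common ring of $\calT$-integers, enlarging $\calT$ so that: the blow-up/fibration construction of Proposition~\ref{prop:eight} or~\ref{prop:eighteen} exists relative to $\Spec\calO_{k,\calT}$; the lift of $\alpha$ to $\Br(\calX)$ exists and restricts to $\alpha$; all relevant morphisms stay smooth and the fiber-dimension/étale-locus conditions (e.g.\ no plane $P'\subset Y_\frakp$ meeting $P$ along a line, the sextic del Pezzo fibration has the expected form) persist for every $\frakp \notin \calT$. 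This is possible because each such condition is open and fails only on a proper closed subset of the base, hence excludes only finitely many $\frakp$. (2) Apply Theorem~\ref{thm:main} to $(X,\alpha)$ to obtain $\calS_0$ as above; intersect with $\{\frakp \notin \calT\}$, which only removes finitely many places and hence does not change the density, giving a positive-density set $\calS$. (3) For $\frakp \in \calS$, invoke the reduction mod $\frakp$ of the rationality criterion: since $\alpha_\frakp = 0 \in \Br(X_\frakp)$, the étale $\PP^1$-bundle $\varphi_\frakp$ (resp.\ the del Pezzo fibration $\psi_\frakp$) has a section over $\F_\frakp$ — in the discriminant-$8$ case because the class of an étale $\PP^1$-bundle in $\Br[2]$ vanishing exactly means it is the projectivization of a rank-two bundle, so $\varphi_\frakp$ has a section and hence so does $\varpi_\frakp$; in the discriminant-$18$ case by the $\F_\frakp$-analogue of \cite[Prop.~10]{AHTV}. (4) Conclude that $\varpi_\frakp$ (resp.\ the generic fiber of $\psi_\frakp$) has a rational section, and run the rationality construction of Proposition~\ref{prop:eight} (resp.\ Proposition~\ref{prop:eighteen}) verbatim over $\F_\frakp$: a quadric surface bundle over $\PP^2_{\F_\frakp}$ with a rational section is $\F_\frakp$-rational, and likewise a sextic del Pezzo fibration over $\PP^2_{\F_\frakp}$ with a section (or degree-prime-to-$3$ multisection) has $\F_\frakp$-rational generic fiber, whence $Y_\frakp = \calY_\frakp$ is rational over $\F_\frakp$.

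The main obstacle I expect is step (1): verifying that the geometric inputs of Propositions~\ref{prop:eight} and~\ref{prop:eighteen} genuinely spread out over $\Spec \calO_{k,\calT}$ after possibly enlarging $\calT$, and — more delicately — that the \emph{proofs} of those rationality statements, which are phrased over $\C$ (or over an algebraically closed field), go through over the finite field $\F_\frakp$. For discriminant $8$ this is essentially formal: the identity ``$\alpha = 0$ in $\Br(X)[2]$ $\iff$ $\varphi$ has a section $\iff$ $\varpi$ has a section $\iff$ $Y$ rational'' is a chain of scheme-theoretic implications valid over any field, and the relevant references (\cite{HassettJAG,HVAIM,HPT}) are geometric. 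For discriminant $18$, one must check that \cite[Prop.~10]{AHTV} and the rationality criterion \cite[\S 3]{AHTV} are field-agnostic, or at least valid over $\F_\frakp$ for $\frakp$ outside a finite bad set; this is the place where care is needed, and where one may need to enlarge $\calT$ to avoid small residue characteristics. Everything else — the density bookkeeping and the compatibility of reduction with Brauer and Néron–Severi groups — is already packaged in Theorem~\ref{thm:main} and its proof.
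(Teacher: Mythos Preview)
Your proposal is correct and follows essentially the same approach as the paper: spread out $Y$, the auxiliary constructions of Propositions~\ref{prop:eight} and~\ref{prop:eighteen}, and the resulting relative K3 surface with its relative Brauer class over $\Spec\calO_{k,\calT}$, then apply Theorem~\ref{thm:main} using this model and invoke the rationality criteria fiberwise. The paper's proof is terser---it simply asserts that the constructions work over $\calO_{k,\calT}$ and that ``the result follows''---whereas you have spelled out the spreading-out and the field-independence of the rationality arguments more carefully, but the strategy is identical.
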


\begin{proof}
Choose a smooth proper model $\mathcal{Y}$ for $Y$ over $\calO_{k,\mathcal{T}}$ for $\mathcal{T}$ a finite set of places. The constructions of Propositions \ref{prop:eight} and \ref{prop:eighteen} also work over $\calO_{k,\calT}$, which gives rise to a relative K3 surface and relative Brauer class associated to $\mathcal{Y}$. Using this model in \S\ref{s:proof}, the result follows. 
\end{proof}

Pairs $(X,\alpha)$ arising from the constructions in Proposition~\ref{prop:eight} or \ref{prop:eighteen} that satisfy the hypotheses of Theorem~\ref{thm:main} exist. Using Remark~\ref{rem:vgeneral}, an example with $Y \in \calC_{K_8}$ is given in~\cite{HVAIM}; an example with $Y \in \calC_{K_{18}}$ is given in~\cite{BVA}. Note that in both cases, the Brauer class given is transcendental.

\section{Application: reductions of twisted derived equivalences of K3 surfaces}
\label{s:derived}

Brauer classes naturally arise in the study of moduli spaces of sheaves on K3 surfaces. We review the relevant set-up here; a general reference for these moduli spaces of sheaves is \cite{HL} or \cite[Chapter 10]{HuybrechtsK3s}. Over nonclosed fields, see \cite{Charlesbirbound} or \cite{Frei}.

Let $X$ be a K3 surface of degree $2d$ with polarization $h$. Fix a primitive Mukai vector $v=(r,c,s)\in \Z\oplus \NS(X)\oplus \Z$, and let $M:=M_h(v)$ be the moduli space of Gieseker $h$-semi-stable sheaves on $X$ of rank $r$, first Chern class $c$, and Euler characteristic $r+s$. Assume that $h$ is $v$-generic, by which we mean that every $h$-semi-stable sheaf over $\kbar$ is $h$-stable. Then $M$ parametrizes geometrically stable sheaves and is a smooth projective variety. Let us further assume that $M$ is nonempty (e.g., by taking $v$ to be effective). When $v^2=c^2-2rs=0$, Mukai showed that $M$ is again a K3 surface \cite[Theorem 1.4]{MukaiBombay}. The relationship between these two K3 surfaces is sometimes called \defi{Mukai duality}.

There is a Brauer class $\alpha \in \Br(M)$, of order dividing the $\gcd$ over all Mukai vectors $w$ of $(r,c,s)\cdot w$ (where $\cdot$ means the Mukai pairing), which obstructs the existence of a universal sheaf on $X \times M$. This Brauer class plays a role in the following equivalences of derived categories. Let $\pi_M\colon X\times M \to M$ be the projection onto the second factor.

\begin{thm}
\label{thm:derivedequiv}
Let $X$ and $(M,\alpha)$ be the K3 surface and twisted K3 surface, respectively, described above, both defined over an arbitrary field $k$. 
\begin{itemize}
\item If $\alpha=0$, then $M$ is a fine moduli space, and the universal sheaf on $X\times M$ induces a $k$-linear derived equivalence $D^b(M) \cong D^b(X)$. 
\item More generally, there is always an $\pi_M^*\alpha^{-1}$-twisted universal sheaf on $X \times M$ which induces a $k$-linear derived equivalence $D^b(M,\alpha) \cong D^b(X)$, where $D^b(M,\alpha)$ is the derived category of $\alpha$-twisted sheaves on $M$.
\end{itemize}
\end{thm}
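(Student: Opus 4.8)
The plan is to reduce the whole statement to the case of an algebraically closed field, where this is the classical theorem of Mukai, Bridgeland--Maciocia, Orlov and C\u{a}ld\u{a}raru, and then descend along $\kbar/k$. First I would record the geometric inputs: under the standing hypotheses ($v$ primitive, $h$ $v$-generic so that semistability equals geometric stability, $v^2 = c^2 - 2rs = 0$, and $M = M_h(v)$ nonempty), $M$ is a smooth projective $k$-variety, and $M_{\kbar}$ is a K3 surface. I would then build the twisted universal sheaf \emph{over $k$}. The stack of $h$-stable sheaves on $X$ with invariants $v$, after rigidifying the automorphisms $\G_m$, is a $\G_m$-gerbe $\calM \to M$ whose class in $\HH^2_{\et}(M,\G_m)$ is $\alpha^{\pm 1}$; on $X \times \calM$ there is a tautological sheaf, equivalently a $\pi_M^*\alpha^{-1}$-twisted coherent sheaf $\calE$ on $X \times M$, flat over $M$, with fiber $F$ over each $[F] \in M(\kbar)$. (Concretely, \'etale-locally on $M$ a universal sheaf exists, and the scalar discrepancies between local choices form a $2$-cocycle representing $\alpha$; this patching is Galois-equivariant, so $\calE$ is defined over $k$ and base-changes to the corresponding object over $\kbar$.)

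Next I would set up the Fourier--Mukai functor $\Phi := \Phi_{\calE}\colon D^b(X) \to D^b(M,\alpha)$, $\Phi(-) = R\pi_{M,*}\bigl(\pi_X^*(-) \otimes \calE\bigr)$, which is exact and $k$-linear. Since $M$ is smooth projective, all objects in sight are perfect, $D^b(M,\alpha)\otimes_k \kbar \simeq D^b(M_{\kbar},\alpha_{\kbar})$, and $\Phi$ commutes with $-\otimes_k\kbar$ by flat base change for $R\pi_{M,*}$. Over $\kbar$, C\u{a}ld\u{a}raru's theorem (using Mukai's dimension count and the Bondal--Orlov / Bridgeland--Maciocia fully faithfulness criterion, valid over any algebraically closed field since $v^2=0$) says $\Phi_{\calE_{\kbar}}$ is an equivalence.

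For the descent, note that a candidate quasi-inverse is again a Fourier--Mukai functor, with kernel the shifted dual $\calE^{\vee}$, which is $\pi_M^*\alpha$-twisted and hence gives a functor $D^b(M,\alpha) \to D^b(X)$. The identities $\calE^{\vee} * \calE \cong \sO_{\Delta_X}$ in $D^b(X\times X)$ and $\calE * \calE^{\vee} \cong \sO_{\Delta_M}$ in $D^b(M\times M)$ are isomorphisms of objects in derived categories of smooth projective $k$-varieties; since $-\otimes_k\kbar$ is exact, faithful, and therefore conservative on these categories (and commutes with the convolution $Rp_{13,*}(p_{12}^*(-)\otimes p_{23}^*(-))$ by flat base change), it suffices to check them over $\kbar$, where they follow from the equivalence $\Phi_{\calE_{\kbar}}$. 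Hence $\Phi$ is a $k$-linear equivalence $D^b(X) \xrightarrow{\ \sim\ } D^b(M,\alpha)$. Finally, if $\alpha = 0 \in \Br(M)$ then the gerbe $\calM \to M$ is neutral over $k$; a trivialization is a section $M \to \calM$, which pulls $\calE$ back to an honest universal sheaf on $X\times M$, so $M$ is a fine moduli space, $D^b(M,0) = D^b(M)$, and the previous paragraph gives the $k$-linear equivalence $D^b(M)\cong D^b(X)$.

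I expect the main obstacle to be the descent step: making precise that the twisted universal sheaf and the Fourier--Mukai kernel genuinely come from $k$ (functoriality of the rigidified moduli stack, Galois-equivariance of the gluing cocycle) and that the base-change compatibility $D^b(M,\alpha)\otimes_k\kbar \simeq D^b(M_{\kbar},\alpha_{\kbar})$ together with conservativity of $-\otimes_k\kbar$ legitimately lets one verify the kernel identities over $\kbar$. Once these formal points are in place, the hard content is entirely a citation to the geometric case.
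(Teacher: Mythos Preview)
Your descent strategy is sound and essentially equivalent to the paper's: the paper simply cites Orlov's Lemma (a Fourier--Mukai functor is an equivalence if and only if it is one after base change to the algebraic closure), while you spell out the same mechanism via conservativity of $-\otimes_k\kbar$ on the kernel convolutions. One small caveat: conservativity tells you a \emph{morphism} is an isomorphism once it is over $\kbar$, not that two objects which become isomorphic over $\kbar$ were already isomorphic over $k$. So you need actual maps $\calO_{\Delta_X}\to \calE^\vee * \calE$ and $\calE * \calE^\vee \to \calO_{\Delta_M}$ defined over $k$; these come from the unit and counit of the adjunction $(\Phi,\Phi^R)$, and you should say so.

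The real gap is in your treatment of the algebraically closed case. You write that the Bondal--Orlov / Bridgeland fully-faithfulness criterion is ``valid over any algebraically closed field since $v^2=0$'', but this conflates two things. The condition $v^2=0$ is what makes $M$ a K3 surface, so that fully faithful implies equivalence; it has nothing to do with the validity of the criterion itself. The standard proof of Bondal--Orlov uses a generic-smoothness step that is only available in characteristic zero. The paper's actual work lives precisely here: it follows Huybrechts' proof of the criterion and observes that the characteristic-zero step (injectivity of $\Ext^1_M(k(x),k(x))\to \Ext^1_X(F,F)$ for generic $x$) can be bypassed because for a moduli space of \emph{stable} sheaves this map is the Kodaira--Spencer map, which is an isomorphism for every $[F]\in M$, in any characteristic. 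Your proposal treats the $\kbar$-case as a black-box citation to C\u{a}ld\u{a}raru, but C\u{a}ld\u{a}raru works over $\C$; to cover positive characteristic you must either supply this Kodaira--Spencer argument or cite a source (e.g.\ Lieblich--Olsson) that does. Your closing remark that ``the hard content is entirely a citation to the geometric case'' is thus exactly backwards: the descent is the formal part, and the geometric case over $\kbar$ in positive characteristic is where the content is.
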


\begin{proof}
The first statement is due to Orlov for $k=\C$ \cite[Theorem 3.11]{Orlov}, and the second statement is due to C\u{a}ld\u{a}raru for $k=\C$ \cite[Theorem~1.3]{Caldararu}. It is well known to experts that the results also hold over arbitrary fields (e.g., \cite[Theorem 3.16]{LO}, \cite[Proposition 3.4.2]{LMS}), but we include a sketch of a proof here for completeness. 

Note that the first statement follows from the second when $\alpha=0$. By \cite[Lemma 2.12]{Orlovalgclosed}, we may assume that $k$ is algebraically closed. Let $\calU$ be the $\pi_M^*\alpha^{-1}$-twisted universal sheaf on $X\times M$, and $\Phi_\calU\colon D^b(M, \alpha) \to D^b(X)$ the Fourier-Mukai transform given by $\calU$. It is enough to show that $\Phi_\calU$ is fully faithful \cite[Cor.~7.8]{HuybrechtsFM}, and for this we would like to verify the standard criterion of Bondal-Orlov \cite{BondalOrlov}: the functor $\Phi_\calU$ is fully faithful if and only if for any two closed points $x, y \in M$, 
\[\Hom(\Phi_\calU(k(x)), \Phi_\calU(k(y))[i]) = 
\begin{cases}
k & \text{ if } x=y \text{ and } i=0 \\
0 & \text{ if } x \neq y \text{ or } i<0 \text{ or } i>\dim X.
\end{cases}
\]
However, the usual proof of this criterion requires $k$ to be a field of characteristic zero. We follow the proof given in \cite[Theorem 7.1]{HuybrechtsFM}, pointing out the necessary changes. First, the proof only handles the case of a Fourier-Mukai transform between derived categories of untwisted sheaves, but \cite[Theorem 3.2.1]{Caldararuthesis} explains the adaptations necessary for twisted sheaves.  
Next, we note that the characteristic zero assumption is used only in Step 5, to verify the additional hypothesis used in Step 3, that for generic $x\in M$, the homomorphism 
\[
\Hom(k(x),k(x)[i]) \to \Hom(\Phi_\calU(k(x)),\Phi_\calU(k(x))[i])
\]
is injective for $i=1$. For $M$ a moduli space of stable sheaves on $X$ and $x\in M$ the point representing a sheaf $F$, this map is the Kodaira-Spencer map 
\[
T_x M \cong \Ext^1_M(k(x),k(x)) \to \Ext^1_X(F,F),
\]
which is an isomorphism because $F$ is stable \cite[Proposition~{\bf 10}.1.11]{HuybrechtsK3s}. 

Finally, since $M$ parametrizes stable sheaves and since for any $x \in M$, $\Phi_\calU(k(x))$ is a sheaf rather than a complex, it is clear that any two points $x, y \in M$ satisfy the homomorphism criterion. This completes the proof. 
\end{proof}
 
\begin{example}
\label{ex:modulispace}
Let $X$ be a complete intersection of three quadrics in $\PP^5$, so that $X$ is a degree 8 K3 surface, and assume $\NS(\Xbar) =\Z h$. We consider $M:=M_h(2,h,2)$, which parametrizes rank 2 geometrically stable sheaves with first Chern class $h$ and Euler characteristic 4. Because $X$ has geometric Picard rank 1, there are no properly semi-stable sheaves. Since $v^2=h^2-2\cdot 2\cdot 2=0$, $M$ is a K3 surface if it is nonempty. It can be realized via the explicit geometric construction outlined below, which shows that $M$ is a degree 2 K3 surface. This example has been thoroughly studied and is worked out in careful detail in \cite[Example 0.9]{MukaiInvent}, \cite[Example 2.2]{MukaiSugaku}, and \cite[\S3]{IngallsKhalid}. See also \cite[\S3.2]{MSTVA} for an approach using projective duality. We summarize the key points here.

Let $\Lambda$ be the net of quadrics containing $X$, and $C:=V(\det \Lambda)$ the degeneracy locus, which is a plane sextic curve. When the degenerate conics corresponding to points in $C$ are all of rank 5, $C$ is smooth. In this case, $M$ is isomorphic to the double cover of $\Lambda \cong \PP^2$ branched along $C$, where the two sheaves in $M$ lying over a smooth quadric $[Q]\in \Lambda$ are the dual of the tautological bundle and the quotient bundle corresponding to $Q$ (both restricted to $X$) via the isomorphism $Q \cong \Gr(2,4)$.

The $\gcd$ of $(2,h,2)\cdot w$ as $w$ varies over all Mukai vectors is equal to 2, so the Brauer class $\alpha \in M$ which obstructs the existence of a universal sheaf on $X\times M$ has order dividing 2. Moreover, there is a Brauer-Severi variety $W \to M$ which represents $\alpha$. Let $\varpi\colon \mathcal{Q} \to \PP^2$ be the universal family of quadric fourfolds in $\Lambda$, and $F_2(\varpi) \to \PP^2$ be the relative Fano variety of planes in the fibers of $\varpi$. This morphism factors as $$F_2(\varpi) \xrightarrow{\varphi} M \to \PP^2,$$ where $\varphi$ is an \'etale $\PP^3$-bundle and $M \to \PP^2$ is the double cover branched along $C$. Then $\varphi \colon F_2(\varpi) \to M$ is the Brauer-Severi variety.
\end{example}

For other explicit examples of degree 18/degree 2 and degree 16/degree 4 dualities, in which the Mukai duality is realized through Projective duality, see \cite[\S3]{MSTVA}.

For $X$ and $(M,\alpha)$ as above defined over a number field $k$, there is a finite set of places $\mathcal{T}$ such that there is a smooth proper model $\mathcal{X}\times \mathcal{M}$ over $\mathcal{O}_{k,\mathcal{T}}$ along with a relative twisted universal sheaf on $\mathcal{X} \times \mathcal{M}$. For any place $\frakp\not\in\mathcal{T}$, the relative twisted universal sheaf specializes to a twisted universal sheaf on the reduction ${X}_{\frakp} \times {M}_{\frakp}$. For the derived equivalence $D^b(X_{\frakp})\cong D^b(M_{\frakp})$ that we seek, we need to know that $M_\frakp$ contains only stable sheaves.
This is not guaranteed by $M_\frakp$ being smooth, but the set of places $\frakp$ for which 
$M_\frakp$ contains properly semistable sheaves is finite. Indeed, being geometrically stable is an open condition \cite[Prop.~2.3.1]{HL}, so the locus of properly semi-stable sheaves is closed, and the morphism $\mathcal{M} \to \Spec \mathcal{O}_{k,\mathcal{T}}$ is projective \cite[Theorem~0.2]{Langer}. Finiteness follows since by assumption the generic fiber does not contain any properly semistable sheaves. By Theorems \ref{thm:derivedequiv} and \ref{thm:main}, we conclude:

\begin{thm}
\label{thm:derapp}
Let $k$ be a number field, $X$ a degree $2d$ K3 surface over $k$, and $M$ a moduli space of geometrically stable sheaves on $X$ such that $(M,\alpha)$ is a twisted K3 surface. Assume that the pair $(M,\alpha)$ satisfies the hypotheses of Theorem \ref{thm:main}. Then there exists a set of places $\mathcal{S}$ of $k$ of positive natural density for which there is an $\F_{\frakp}$-linear derived equivalence $D^b(X_{\frakp})\cong D^b(M_{\frakp})$ for every $\frakp\in \mathcal{S}$.
\qed
\end{thm}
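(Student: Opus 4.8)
The plan is to combine Theorem~\ref{thm:main}, applied to the twisted K3 surface $(M,\alpha)$, with the first bullet of Theorem~\ref{thm:derivedequiv}, applied over the residue fields $\F_\frakp$; the remaining work is the reduction-theoretic bookkeeping already recorded in the paragraph preceding the statement. First I would fix a finite set of places $\calT$ of $k$, enlarging the one from that paragraph, so that simultaneously: (i) there are smooth proper models $\calX$ of $X$ and $\calM$ of $M$ over $\calO_{k,\calT}$ together with a relative $\pi_{\calM}^*\beta^{-1}$-twisted universal sheaf on $\calX\times_{\calO_{k,\calT}}\calM$, where $\beta\in\Br(\calM)$ restricts to $\alpha$ on the generic fiber; (ii) $\alpha$ is unramified at every $\frakp\notin\calT$; and (iii) $\calT$ contains the finitely many places at which $M_\frakp$ acquires properly semistable sheaves --- this set is finite because geometric stability is open, so the properly semistable locus in $\calM$ is closed, the morphism $\calM\to\Spec\calO_{k,\calT}$ is projective, and the generic fiber contains no properly semistable sheaves. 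For $\frakp\notin\calT$ it then follows that $M_\frakp$ is a smooth projective K3 surface (we still have $v_\frakp^2=0$ and $h_\frakp$ is $v_\frakp$-generic) parametrizing only geometrically stable sheaves, and that the Brauer obstruction to a universal sheaf on $X_\frakp\times M_\frakp$ is exactly $\beta_\frakp=\alpha_\frakp$.

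Next I would invoke Theorem~\ref{thm:main} for $(M,\alpha)$: this is legitimate precisely because $(M,\alpha)$ is assumed to satisfy the hypotheses of that theorem, and it produces a set $\calS_0$ of places of $k$ of positive natural density with $\alpha_\frakp=0$ in $\Br(M_\frakp)$. I would then set $\calS:=\calS_0\setminus\calT$, which still has positive natural density since $\calT$ is finite.

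Finally, I would argue place by place. For $\frakp\in\calS$ we have $\alpha_\frakp=0$, so the relative twisted universal sheaf specializes to an honest (untwisted) universal sheaf on $X_\frakp\times M_\frakp$; equivalently, $M_\frakp$ is a \emph{fine} moduli space of geometrically stable sheaves on the K3 surface $X_\frakp$ with $v_\frakp^2=0$. The first bullet of Theorem~\ref{thm:derivedequiv}, applied with $k=\F_\frakp$, then furnishes an $\F_\frakp$-linear derived equivalence $D^b(M_\frakp)\cong D^b(X_\frakp)$, which is exactly the conclusion.

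The only genuine subtlety --- rather than an obstacle --- is checking that the specialization of the Brauer \emph{obstruction} matches the specialization of $\alpha$ as a class in $\Br(M)$; this is the content of the assertion that the relative twisted universal sheaf specializes to a twisted universal sheaf on the reduction $X_\frakp\times M_\frakp$, together with the Kummer-sequence compatibility of reductions used throughout \S\ref{s:proof}. Granted that, the theorem is a formal consequence of the two cited results and the Chebotarev density input already internal to Theorem~\ref{thm:main}.
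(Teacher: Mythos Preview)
Your proposal is correct and follows essentially the same approach as the paper: the paper's proof is just the \qed at the end of the statement, with all the substance (choice of $\calT$, finiteness of places with properly semistable sheaves, and the appeal to Theorems~\ref{thm:derivedequiv} and~\ref{thm:main}) contained in the paragraph immediately preceding the theorem, which you have reproduced and slightly expanded. The only minor quibble is the parenthetical ``$h_\frakp$ is $v_\frakp$-generic'': this does not follow automatically from $h$ being $v$-generic, but you do not actually need it, since smoothness of $M_\frakp$ comes from the smooth proper model and absence of properly semistable sheaves comes from your condition~(iii).
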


Pairs $(M, \alpha)$ satisfying the hypotheses of Theorem~\ref{thm:main} exist. By Remark~\ref{rem:vgeneral}, the example in~\cite{MSTVA}*{\S5.4} gives a K3 surface $X/\Q$ with $\NS(\Xbar) \isom \Z h$ and $h^2 = 8$, such that the associated twisted K3 surface $(M_h(2,h,2),\alpha)$ satisfies the hypotheses of Theorem \ref{thm:main}. Moreover, note that $\alpha$ in the example is a transcendental Brauer class. 

\newpage


\begin{bibdiv}
	\begin{biblist}

\bib{AHTV}{article}{
   author={Addington, N.},
   author={Hassett, B.},
   author={Tschinkel, Yu.},
   author={V\'{a}rilly-Alvarado, A.},
   title={Cubic fourfolds fibered in sextic del Pezzo surfaces},
   journal={Amer. J. Math.},
   volume={141},
   date={2019},
   number={6},
   pages={1479--1500},
}

\bib{AIMPL}{article}{
   label={AIM}
   title={AimPL: Rational subvarieties in positive characteristic},
   eprint={http://aimpl.org/ratsubvarpos},
}

\bib{Andre}{article}{
   author={Andr\'{e}, Y.},
   title={On the Shafarevich and Tate conjectures for hyper-K\"{a}hler
   varieties},
   journal={Math. Ann.},
   volume={305},
   date={1996},
   number={2},
   pages={205--248},
}

\bib{AppOni}{article}{
   author={Appelgate, H.},
   author={Onishi, H.},
   title={Similarity problem over ${\rm SL}(n,\,{\bf Z}_{p})$},
   journal={Proc. Amer. Math. Soc.},
   volume={87},
   date={1983},
   number={2},
   pages={233--238},
}

\bib{BVA}{article}{
   author={Berg, J.},
   author={V\'{a}rilly-Alvarado, A.},
   title={Odd order obstructions to the Hasse principle on general K3
   surfaces},
   journal={Math. Comp.},
   volume={89},
   date={2020},
   number={323},
   pages={1395--1416},
}

\bib{Bogomolov}{article}{
   author={Bogomolov, F. A.},
   title={Sur l'alg\'{e}bricit\'{e} des repr\'{e}sentations $l$-adiques},
   language={French, with English summary},
   journal={C. R. Acad. Sci. Paris S\'{e}r. A-B},
   volume={290},
   date={1980},
   number={15},
   pages={A701--A703},
}

\bib{BondalOrlov}{article}{
  author={Bondal, A.},
  author={Orlov, D.},
  title={Semiorthogonal decomposition for algebraic varieties},
  journal={arXiv preprint alg-geom/9506012},
  year={1995}
}


	
\bib{BLvL}{article}{
   author={Bright, M.},
   author={Logan, A.},
   author={van Luijk, R.},
   title={Finiteness results for K3 surfaces over arbitrary fields},
   journal={Eur. J. Math.},
   volume={6},
   date={2020},
   number={2},
   pages={336--366},
}

\bib{CadoretMoonen}{article}{
   author={Cadoret, A.},
   author={Moonen, B.},
   title={Integral and adelic aspects of the Mumford-Tate conjecture},
   journal={J. Inst. Math. Jussieu},
   volume={19},
   date={2020},
   number={3},
   pages={869--890},
}

\bib{Caldararuthesis}{book}{
   author={C\u{a}ld\u{a}raru, A.},
   title={Derived categories of twisted sheaves on Calabi-Yau manifolds},
   note={Thesis (Ph.D.)--Cornell University},
   publisher={ProQuest LLC, Ann Arbor, MI},
   date={2000},
   pages={196},
}

\bib{Caldararu}{article}{
   author={C\u{a}ld\u{a}raru, A.},
   title={Nonfine moduli spaces of sheaves on $K3$ surfaces},
   journal={Int. Math. Res. Not.},
   date={2002},
   number={20},
   pages={1027--1056},
}

\bib{CharlesTateConj}{article}{
   author={Charles, F.},
   title={The Tate conjecture for $K3$ surfaces over finite fields},
   journal={Invent. Math.},
   volume={194},
   date={2013},
   number={1},
   pages={119--145},
}

\bib{Charles}{article}{
   author={Charles, F.},
   title={On the Picard number of K3 surfaces over number fields},
   journal={Algebra Number Theory},
   volume={8},
   date={2014},
   number={1},
   pages={1--17},
}

\bib{Charlesbirbound}{article}{
   author={Charles, F.},
   title={Birational boundedness for holomorphic symplectic varieties,
   Zarhin's trick for $K3$ surfaces, and the Tate conjecture},
   journal={Ann. of Math. (2)},
   volume={184},
   date={2016},
   number={2},
   pages={487--526},
}

\bib{CEJ}{article}{
   author={Costa, E.},
   author={Elsenhans, A.-S.},
   author={Jahnel, J.},
   title={On the distribution of the Picard ranks of the reductions of a
   $K3$ surface},
   journal={Res. Number Theory},
   volume={6},
   date={2020},
   number={3},
   pages={[Paper No. 27, 25 pp.]},
}

\bib{CT}{article}{
   author={Costa, E.},
   author={Tschinkel, Yu.},
   title={Variation of N\'{e}ron-Severi ranks of reductions of K3 surfaces},
   journal={Exp. Math.},
   volume={23},
   date={2014},
   number={4},
   pages={475--481},
}

\bib{deJongKatz}{article}{
   author={de Jong, A. Johan},
   author={Katz, Nicholas M.},
   title={Monodromy and the Tate conjecture: Picard numbers and Mordell-Weil
   ranks in families},
   journal={Israel J. Math.},
   volume={120},
   date={2000},
   number={part A},
   part={part A},
   pages={47--79},
}

\bib{Ellenberg}{article}{
   author={Ellenberg, J.},
   title={$K3$ surfaces over number fields with geometric Picard number one},
   conference={
      title={Arithmetic of higher-dimensional algebraic varieties},
      address={Palo Alto, CA},
      date={2002},
   },
   book={
      series={Progr. Math.},
      volume={226},
      publisher={Birkh\"{a}user Boston, Boston, MA},
   },
   date={2004},
   pages={135--140},
}

\bib{EJSatoTate}{article}{
   author={Elsenhans, A.-S.},
  author={Jahnel, J.},
  title={Frobenius trace distributions for $K3$ surfaces},
  journal={arXiv preprint arXiv:2102.10620},
  year={2021}
}

\bib{Faltings}{article}{
   author={Faltings, G.},
   title={$p$-adic Hodge theory},
   journal={J. Amer. Math. Soc.},
   volume={1},
   date={1988},
   number={1},
   pages={255--299},
}

\bib{Frei}{article}{
   author={Frei, S.},
   title={Moduli spaces of sheaves on K3 surfaces and Galois
   representations},
   journal={Selecta Math. (N.S.)},
   volume={26},
   date={2020},
   number={1},
   pages={Paper No. 6, 16},
}

\bib{Freiprep}{article}{
   author={Frei, S.},
   title={Rational reductions of geometrically irrational varieties.},
   journal={In preparation},
}

\bib{GruSeg}{article}{
author={Grunewald, F.},
   author={Segal, D.},
   title={Some general algorithms. I. Arithmetic groups},
   journal={Ann. of Math. (2)},
   volume={112},
   date={1980},
   number={3},
   pages={531--583},
}
   
\bib{GvirtzSkorobogatov}{article}{
  title={Cohomology and the Brauer groups of diagonal surfaces},
  author={Gvirtz, D.},
  author={Skorobogatov, A. N.},
  journal={arXiv preprint arXiv:1905.11869},
  year={2019}
}

\bib{HassettJAG}{article}{
   author={Hassett, B.},
   title={Some rational cubic fourfolds},
   journal={J. Algebraic Geom.},
   volume={8},
   date={1999},
   number={1},
   pages={103--114},
}

\bib{HassettSpecial}{article}{
   author={Hassett, B.},
   title={Special cubic fourfolds},
   journal={Compositio Math.},
   volume={120},
   date={2000},
   number={1},
   pages={1--23},
}

\bib{HPT}{article}{
   author={Hassett, B.},
   author={Pirutka, A.},
   author={Tschinkel, Yu.},
   title={Stable rationality of quadric surface bundles over surfaces},
   journal={Acta Math.},
   volume={220},
   date={2018},
   number={2},
   pages={341--365},
}
	
\bib{HVAIM}{article}{
   author={Hassett, B.},
   author={V\'{a}rilly-Alvarado, A.},
   author={Varilly, P.},
   title={Transcendental obstructions to weak approximation on general K3
   surfaces},
   journal={Adv. Math.},
   volume={228},
   date={2011},
   number={3},
   pages={1377--1404},
}

\bib{HuybrechtsFM}{book}{
   author={Huybrechts, D.},
   title={Fourier-Mukai transforms in algebraic geometry},
   series={Oxford Mathematical Monographs},
   publisher={The Clarendon Press, Oxford University Press, Oxford},
   date={2006},
   pages={viii+307},
   isbn={978-0-19-929686-6},
   isbn={0-19-929686-3},
}

\bib{HuybrechtsK3s}{book}{
   author={Huybrechts, D.},
   title={Lectures on K3 surfaces},
   series={Cambridge Studies in Advanced Mathematics},
   volume={158},
   publisher={Cambridge University Press, Cambridge},
   date={2016},
   pages={xi+485},
   isbn={978-1-107-15304-2},
}

\bib{HL}{book}{
   author={Huybrechts, D.},
   author={Lehn, M.},
   title={The geometry of moduli spaces of sheaves},
   series={Cambridge Mathematical Library},
   edition={2},
   publisher={Cambridge University Press, Cambridge},
   date={2010},
   pages={xviii+325},
   isbn={978-0-521-13420-0},
}

\bib{IngallsKhalid}{article}{
   author={Ingalls, C.},
   author={Khalid, M.},
   title={Rank 2 sheaves on K3 surfaces: a special construction},
   journal={Q. J. Math.},
   volume={64},
   date={2013},
   number={2},
   pages={443--470},
}

\bib{KMPTateConj}{article}{
   author={Kim, W.},
   author={Madapusi Pera, K.},
   title={2-adic integral canonical models},
   journal={Forum Math. Sigma},
   volume={4},
   date={2016},
   pages={Paper No. e28, 34},
}

\bib{Kuzcubic}{article}{
   author={Kuznetsov, A.},
   title={Derived categories of cubic fourfolds},
   conference={
      title={Cohomological and geometric approaches to rationality problems},
   },
   book={
      series={Progr. Math.},
      volume={282},
      publisher={Birkh\"{a}user Boston, Boston, MA},
   },
   date={2010},
   pages={219--243},
}

\bib{Langer}{article}{
   author={Langer, A.},
   title={Semistable sheaves in positive characteristic},
   journal={Ann. of Math. (2)},
   volume={159},
   date={2004},
   number={1},
   pages={251--276},
}


\bib{LMS}{article}{
   author={Lieblich, M.},
   author={Maulik, D.},
   author={Snowden, A.},
   title={Finiteness of K3 surfaces and the Tate conjecture},
   language={English, with English and French summaries},
   journal={Ann. Sci. \'{E}c. Norm. Sup\'{e}r. (4)},
   volume={47},
   date={2014},
   number={2},
   pages={285--308},
}

\bib{LO}{article}{
   author={Lieblich, M.},
   author={Olsson, M.},
   title={Fourier-Mukai partners of K3 surfaces in positive characteristic},
   language={English, with English and French summaries},
   journal={Ann. Sci. \'{E}c. Norm. Sup\'{e}r. (4)},
   volume={48},
   date={2015},
   number={5},
   pages={1001--1033},
}

\bib{MadapusiPera}{article}{
   author={Madapusi Pera, K.},
   title={The Tate conjecture for K3 surfaces in odd characteristic},
   journal={Invent. Math.},
   volume={201},
   date={2015},
   number={2},
   pages={625--668},
}

\bib{Masser}{article}{
   author={Masser, D.},
   title={Specializations of endomorphism rings of abelian varieties},
   language={English, with English and French summaries},
   journal={Bull. Soc. Math. France},
   volume={124},
   date={1996},
   number={3},
   pages={457--476},
   issn={0037-9484},
   review={\MR{1415735}},
}

\bib{MSTVA}{article}{
   author={McKinnie, K.},
   author={Sawon, J.},
   author={Tanimoto, S.},
   author={V\'{a}rilly-Alvarado, A.},
   title={Brauer groups on K3 surfaces and arithmetic applications},
   conference={
      title={Brauer groups and obstruction problems},
   },
   book={
      series={Progr. Math.},
      volume={320},
      publisher={Birkh\"{a}user/Springer, Cham},
   },
   date={2017},
   pages={177--218},
}

\bib{MukaiInvent}{article}{
   author={Mukai, S.},
   title={Symplectic structure of the moduli space of sheaves on an abelian
   or $K3$ surface},
   journal={Invent. Math.},
   volume={77},
   date={1984},
   number={1},
   pages={101--116},
}

\bib{MukaiSugaku}{article}{
   author={Mukai, S.},
   title={Moduli of vector bundles on $K3$ surfaces and symplectic
   manifolds},
   language={Japanese},
   note={Sugaku Expositions {\bf 1} (1988), no. 2, 139--174},
   journal={S\={u}gaku},
   volume={39},
   date={1987},
   number={3},
   pages={216--235},
}

\bib{MukaiBombay}{article}{
   author={Mukai, S.},
   title={On the moduli space of bundles on $K3$ surfaces. I},
   conference={
      title={Vector bundles on algebraic varieties},
      address={Bombay},
      date={1984},
   },
   book={
      series={Tata Inst. Fund. Res. Stud. Math.},
      volume={11},
      publisher={Tata Inst. Fund. Res., Bombay},
   },
   date={1987},
   pages={341--413},
}

\bib{Noot}{article}{
   author={Noot, R.},
   title={Abelian varieties---Galois representation and properties of
   ordinary reduction},
   note={Special issue in honour of Frans Oort},
   journal={Compositio Math.},
   volume={97},
   date={1995},
   number={1-2},
   pages={161--171},
}

\bib{Orlov}{article}{
   author={Orlov, D. O.},
   title={Equivalences of derived categories and $K3$ surfaces},
   note={Algebraic geometry, 7},
   journal={J. Math. Sci. (New York)},
   volume={84},
   date={1997},
   number={5},
   pages={1361--1381},
}

\bib{Orlovalgclosed}{article}{
   author={Orlov, D. O.},
   title={Derived categories of coherent sheaves on abelian varieties and
   equivalences between them},
   language={Russian, with Russian summary},
   journal={Izv. Ross. Akad. Nauk Ser. Mat.},
   volume={66},
   date={2002},
   number={3},
   pages={131--158},
   translation={
      journal={Izv. Math.},
      volume={66},
      date={2002},
      number={3},
      pages={569--594},
   },
}
	

\bib{Schnell}{article}{
   author={Schnell, C.},
   title={Two lectures about Mumford-Tate groups},
   journal={Rend. Semin. Mat. Univ. Politec. Torino},
   volume={69},
   date={2011},
   number={2},
   pages={199--216},
}


\bib{Serrecomment}{article}{
   author={Serre, Jean-Pierre},
   title={Un crit\`ere d'ind\'{e}pendance pour une famille de repr\'{e}sentations
   $\ell$-adiques},
   language={French, with English summary},
   journal={Comment. Math. Helv.},
   volume={88},
   date={2013},
   number={3},
   pages={541--554},
}

\bib{Serreletter}{book}{
   author={Serre, J.-P.},
   title={Letter to K.~Ribet of January 1, 1981, in \OE uvres. Vol. IV},
   language={French},
   note={1985--1998},
   publisher={Springer, Berlin},
   date={1986},
   pages={1-17},
}

\bib{SGA4}{book}{
   title={Th\'{e}orie des topos et cohomologie \'{e}tale des sch\'{e}mas. Tome 3},
   language={French},
   series={Lecture Notes in Mathematics, Vol. 305},
   note={S\'{e}minaire de G\'{e}om\'{e}trie Alg\'{e}brique du Bois-Marie 1963--1964 (SGA 4);
   Dirig\'{e} par M. Artin, A. Grothendieck et J. L. Verdier. Avec la
   collaboration de P. Deligne et B. Saint-Donat},
   publisher={Springer-Verlag, Berlin-New York},
   date={1973},
   pages={vi+640},
   label={SGAIV-3}
}

\bib{SSTT}{article}{
  title={Exceptional jumps of Picard ranks of reductions of K3 surfaces over number fields},
  author={Shankar, A. N.},
  author={Shankar, A.},
  author={Tang, Y.},
  author={Tayou, S.},
  journal={arXiv preprint arXiv:1909.07473v1},
  year={2019}
}

\bib{Springer}{book}{
   author={Springer, T.~A.},
   title={Linear algebraic groups},
   series={Modern Birkh\"{a}user Classics},
   edition={2},
   publisher={Birkh\"{a}user Boston, Inc., Boston, MA},
   date={2009},
   pages={xvi+334},
   isbn={978-0-8176-4839-8},
}

\bib{TankeevI}{article}{
   author={Tankeev, S.~G.},
   title={Surfaces of $K3$ type over number fields and the Mumford-Tate
   conjecture},
   language={Russian},
   journal={Izv. Akad. Nauk SSSR Ser. Mat.},
   volume={54},
   date={1990},
   number={4},
   pages={846--861},
   translation={
      journal={Math. USSR-Izv.},
      volume={37},
      date={1991},
      number={1},
      pages={191--208},
   },
}

\bib{TankeevII}{article}{
   author={Tankeev, S. G.},
   title={Surfaces of $K3$ type over number fields and the Mumford-Tate
   conjecture. II},
   language={Russian, with Russian summary},
   journal={Izv. Ross. Akad. Nauk Ser. Mat.},
   volume={59},
   date={1995},
   number={3},
   pages={179--206},
   translation={
      journal={Izv. Math.},
      volume={59},
      date={1995},
      number={3},
      pages={619--646},
   },
}

\bib{Tate}{article}{
   author={Tate, J.},
   title={On the conjectures of Birch and Swinnerton-Dyer and a geometric
   analog},
   conference={
      title={Dix expos\'{e}s sur la cohomologie des sch\'{e}mas},
   },
   book={
      series={Adv. Stud. Pure Math.},
      volume={3},
      publisher={North-Holland, Amsterdam},
   },
   date={1968},
   pages={189--214},
}

\bib{Terasoma}{article}{
   author={Terasoma, T.},
   title={Complete intersections with middle Picard number $1$ defined over
   ${\bf Q}$},
   journal={Math. Z.},
   volume={189},
   date={1985},
   number={2},
   pages={289--296},
}

\bib{Totaro}{article}{
   author={Totaro, B.},
   title={Hypersurfaces that are not stably rational},
   journal={J. Amer. Math. Soc.},
   volume={29},
   date={2016},
   number={3},
   pages={883--891},
}

\bib{vGV}{article}{
   author={van Geemen, Bert},
   author={Voisin, Claire},
   title={On a conjecture of Matsushita},
   journal={Int. Math. Res. Not. IMRN},
   date={2016},
   number={10},
   pages={3111--3123},
}

\bib{vanLuijk}{article}{
   author={van Luijk, R.},
   title={K3 surfaces with Picard number one and infinitely many rational
   points},
   journal={Algebra Number Theory},
   volume={1},
   date={2007},
   number={1},
   pages={1--15},
   issn={1937-0652},
   review={\MR{2322921}},
   doi={10.2140/ant.2007.1.1},
}

\bib{VoisinHodge}{article}{
   author={Voisin, C.},
   title={Some aspects of the Hodge conjecture},
   journal={Jpn. J. Math.},
   volume={2},
   date={2007},
   number={2},
   pages={261--296},
}

\bib{Zarhin}{article}{
   author={Zarhin, Yu. G.},
   title={Hodge groups of $K3$ surfaces},
   journal={J. Reine Angew. Math.},
   volume={341},
   date={1983},
   pages={193--220},
}

	\end{biblist}
\end{bibdiv}

\end{document}